\crefname{hypothesis}{Hypothesis}{Hypotheses}
\title{The Fascinating World of 2 $\bm\times$ 2 $\bm\times$ 2 Tensors:\\
  Its Geometry and Optimization Challenges%
  \thanks{
    \funding{G.B. was supported in part by a CSEM fellowship from the Oden Insitute.
J.K. and G.B. were  supported in part by 
NSF DMS 2309782, NSF DMS 2436499, NSF
CISE-IIS 2312746 and DE SC0025312.
T.K. was supported in part by the JTO visitor program at the Oden Institute.}
  }
}
\author{
  Gabriel H. Brown\thanks{Oden Institute, University of Texas at Austin, Austin, TX
  (\email{ghbrown@utexas.edu}).}
  \and Joe Kileel\thanks{Department of Mathematics and Oden Institute, University of Texas at Austin, Austin, TX (\email{jkileel@math.utexas.edu}).}
\and Tamara G. Kolda\thanks{MathSci.ai, Dublin, CA
  (\email{tammy.kolda@mathsci.ai}).}
}
\pgfplotsset{compat=1.18}
\DeclarePairedDelimiter{\prn}{(}{)}  %
\DeclarePairedDelimiter{\abs}{\lvert}{\rvert}  %
\DeclarePairedDelimiter{\norm}{\lVert}{\rVert} %
\let\oldabs\abs
\def\abs{\@ifstar{\oldabs}{\oldabs*}}
\let\oldnorm\norm
\def\norm{\@ifstar{\oldnorm}{\oldnorm*}}
\renewcommand*\env@matrix[1][*\c@MaxMatrixCols c]{%
  \hskip -\arraycolsep
  \let\@ifnextchar\new@ifnextchar
  \array{#1}}
\DeclareMathOperator{\argmin}{argmin}
\DeclareMathOperator{\rank}{rank}
\DeclareMathOperator{\mrank}{multirank}
\renewcommand{\vec}{\operatorname{vec}}
\DeclareMathOperator{\col}{col}
\newcommand{\ttt}{2 \times 2 \times 2}
\newcommand{\mbb}{\mathbb}
\newcommand{\mbf}{\mathbf}
\NewDocumentEnvironment{te}{}{\begin{bmatrix}[r r | r r]}{\end{bmatrix}}
\NewDocumentEnvironment{tensoreight}{ob}{
  \IfValueTF{#1}
  {\resizebox{!}{#1}{$\begin{te} #2 \end{te}$}}
  {\begin{te} #2 \end{te}}
}{}
\newcommand{\tabletensor}[8]{
  \resizebox{0.15\textwidth}{!}{
    $
    \begin{tensoreight}
      #1 & #2 & #3 & #4 \\ #5 & #6 & #7 & #8
    \end{tensoreight}
    $
  }
}
\NewDocumentCommand{\Tn}{O{} m !g}{%
  \boldsymbol{#1{\mathscr{\MakeUppercase{#2}}}}%
  \IfValueT{#3}{_{#3}}%
}
\NewDocumentCommand{\Tr}{s}{\IfBooleanTF{#1}{\vphantom{\intercal}}{\intercal}}
\NewDocumentCommand{\Mx}{O{} m !g t' t"}{
  \bm{#1{\mathbf{\MakeUppercase{#2}}}}%
  \IfValueT{#3}{_{#3}}%
  \IfBooleanTF{#4}{^{\Tr}}{%
    \IfBooleanT{#5}{^{\Tr*}}}%
}
\NewDocumentCommand{\Tm}{m m}{
  \Mx{#1}{(#2)}
}
\NewDocumentCommand{\Vc}{O{} m !g t' t"}{
  \bm{#1{\mathbf{\MakeLowercase{#2}}}}%
  \IfValueT{#3}{_{#3}}%
  \IfBooleanTF{#4}{^{\Tr}}{%
    \IfBooleanT{#5}{^{\Tr*}}}%
}
\NewDocumentCommand{\Real}{}{\mathbb{R}}
\NewDocumentCommand{\T}{}{\Tn{T}}
\NewDocumentCommand{\TM}{m}{\Mx{T}{(#1)}}
\NewDocumentCommand{\G}{}{\Tn{G}}
\NewDocumentCommand{\A}{}{\Mx{A}}
\NewDocumentCommand{\B}{}{\Mx{B}}
\NewDocumentCommand{\C}{}{\Mx{C}}
\NewDocumentCommand{\U}{}{\Mx{U}}
\NewDocumentCommand{\V}{}{\Mx{V}}
\NewDocumentCommand{\W}{}{\Mx{W}}
\NewDocumentCommand{\GL}{D(){n}}{\operatorname{GL}(#1)}
\NewDocumentCommand{\SL}{D(){n}}{\operatorname{SL}(#1)}
\RenewDocumentCommand{\O}{D(){n}}{\operatorname{O}(#1)}
\NewDocumentCommand{\SO}{D(){n}}{\operatorname{SO}(#1)}
\NewDocumentCommand{\NV}{D(){n}}{\operatorname{NV}(#1)}
\NewDocumentCommand{\va}{}{\mbf{a}}
\NewDocumentCommand{\vb}{}{\mbf{b}}
\NewDocumentCommand{\vc}{}{\mbf{c}}
\definecolor{tri_green}{HTML}{a2f9bb}  %
\definecolor{tri_blue}{HTML}{a2cff9}
\definecolor{tri_red}{HTML}{f9a2ac}
\definecolor{sunset_yellow}{HTML}{ffa600} %
\definecolor{sunset_red}{HTML}{ff6361}
\definecolor{sunset_blue}{HTML}{58508d}
\definecolor{sunset_purple}{HTML}{bc5090}
\colorlet{light}{sunset_yellow}
\colorlet{mid}{sunset_red}
\colorlet{dark}{sunset_blue}
\colorlet{extra}{sunset_purple}  %
\tikzset{cross/.style={cross out, draw=black, minimum size=2*(#1-\pgflinewidth), inner sep=0pt, outer sep=0pt},
  cross/.default={1pt}}
\newcommand{\webtool}[1]{\href{https://ghbrown.net/deter#1}{\texttt{https://ghbrown.net/deter#1}}}
\begin{document}
\maketitle

\begin{abstract}
This educational article highlights the geometric and algebraic complexities that distinguish tensors from matrices,
to supplement coverage in advanced courses on linear algebra, matrix analysis, and tensor decompositions.
Using the case of real-valued $2 \times 2 \times 2$ tensors,
we show how tensors violate many well-known properties of matrices:
\begin{itemize}[leftmargin=2em]
    \item[--] The rank of a matrix is bounded by its smallest dimension,
    but a $2 \times 2 \times 2$ tensor can be rank 3.
    \item[--] Matrices have a single typical rank, but the rank of a generic $2 \times 2 \times 2$ tensor can be 2 \emph{or} 3---it has two typical ranks.
    \item[--] Any limit point of a sequence of matrices of rank $r$ is at most rank $r$, but a limit point of a sequence of $2 \times 2 \times 2$  tensors of rank 2 can be rank 3 (a higher rank).
    \item[--] Matrices always have a best rank-$r$ approximation,
    but \emph{no} rank-3 tensor of size $2 \times 2 \times 2$ has a best rank-2 approximation.
\end{itemize}
We unify the analysis of the matrix and tensor cases using tools from algebraic geometry and optimization, providing derivations of these surprising facts. To build intuition for the
geometry of rank-constrained sets, students and educators can explore the geometry of matrix and tensor ranks via our interactive visualization tool.
\end{abstract}

\begin{keywords}
  tensors, matrices, 
  rank, geometry,
  hyperdeterminant,
  group action,
  ill-posedness,
  canonical forms,
  semi-algebraic sets, 
  geometry, 
  border rank,
  visualization
\end{keywords}

\begin{MSCcodes}
  	15A69, 65F99, 14P10, 97N30, 97H60
\end{MSCcodes}

\section{Introduction}
\label{sec:introduction}

Tensors have emerged as powerful tools for modeling complex relationships across multiple dimensions.
In this educational article, we aim to highlight key
characteristics that distinguish tensor analysis from matrix analysis, with a focus the geometry of sets of matrices and tensors of particular ranks.
We have created an interactive tool that students and educators can use to visualize and build intuition for the results in this paper.
This article provides material suitable for a numerical linear algebra, data science, or first algebraic geometry course.
It is peppered with helpful examples and exercises designed to deepen understanding.
We assume some knowledge of linear algebra and use examples based on linear algebra to motivate the discussion.

One characterization of a \emph{rank-1 matrix} is as the outer product of two vectors, which we denote here as $\va \circ \vb$, with $(i,j)$-entry equal to $a_i b_j$.
The \emph{rank} of a matrix $\Mx{M} \in \Real^{m \times n}$ is the minimum $r$ such that
$\Mx{M}$ can be written as the sum of $r$ rank-1 matrices:
\begin{displaymath}
  \Mx{M}
  = \A \B'
  \equiv \sum_{\ell=1}^r \mbf{a}_{\ell} \circ \mbf{b}_{\ell}
  .
\end{displaymath}
Here, $\A \in \Real^{m \times r}$ and $\B \in \Real^{n \times r}$ are
the \emph{factor matrices} and $\mbf{a}_{\ell}$ and $\mbf{b}_{\ell}$
are the \emph{factors} representing the $\ell$th columns of $\A$ and
$\B$, respectively. This is illustrated in \cref{fig:matrix-decomp-pic}.

Analogous definitions exist for tensors \cite{kolda_tensor_2025}.
A \emph{rank-1 tensor} is the outer product of three vectors, which we denote
as $\va \circ \vb \circ \vc$ and whose $(i,j,k)$ entry is $a_i b_j c_k$.
The \emph{rank} of a tensor $\T \in \Real^{m \times n \times p}$ is 
the minimum $r$ such that $\T$ can be
written as the sum of $r$ three-way vector outer products:
\begin{align}\label{eqn:tensor-rank-decomposition}
  \T
  = \llbracket \A, \B, \C \rrbracket
  \equiv \sum_{\ell=1}^r \mbf{a}_{\ell} \circ \mbf{b}_{\ell} \circ \mbf{c}_{\ell}
  .
\end{align}
Here, $\A \in \Real^{m \times r}$, $\B \in \Real^{n \times r}$, and
$\C \in \Real^{p \times r}$ are the \emph{factor matrices} and
$\mbf{a}_{\ell}$, $\mbf{b}_{\ell}$, and $\mbf{c}_{\ell}$ are the
\emph{factors} representing the $\ell$th columns of $\A$, $\B$, and
$\C$, respectively.
See \cref{fig:tensor-decomp-pic} for a visualization of this decomposition.

\begin{figure}[ht] 
  \centering
  \begin{subfigure}{\textwidth}
    \centering
    \includegraphics{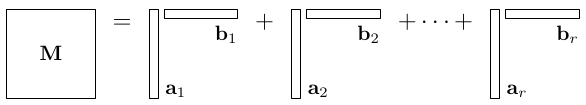}
    \caption{Rank-$r$ matrix decomposition: $\Mx{M} = \sum_{\ell=1}^r \mbf{a}_{\ell} \circ \mbf{b}_{\ell}$.}
    \label{fig:matrix-decomp-pic}
  \end{subfigure}
  \begin{subfigure}{\textwidth}
    \centering 
    \includegraphics{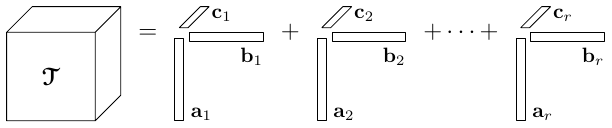}
    \caption{Rank-$r$ tensor decomposition: $\Tn{T} = \sum_{\ell=1}^r \mbf{a}_{\ell} \circ \mbf{b}_{\ell} \circ \mbf{c}_{\ell}$.}
    \label{fig:tensor-decomp-pic}
  \end{subfigure}
  \caption{Matrix and tensor rank decompositions.}
  \label{fig:decomp-pics}
\end{figure}

In contrast to matrix rank,
there is no efficient algorithm for computing tensor rank,
which is known to be NP-hard in general \cite{hillar_most_2013}.
However, the rank of any $\ttt$ tensor can be determined \cite{kruskal_rank_1983,kruskal_rank_1989,ten_berge_kruskals_1991,de_silva_tensor_2008} as we discuss in \cref{sec:hyperd-rank},
and so we focus on $\ttt$ tensors to give a glimpse of the geometric
and algebraic structure of tensors.
We compare the properties of matrix and tensor rank, and delve into why
tensors have the properties that they do.

We call a rank \emph{typical} if it occurs on a set of positive (Lebesgue) measure.
One interesting property of tensors is that there is not necessarily a single typical rank for a tensor of a given size.
For example, the typical rank of a $2 \times 2$ matrix is 2; in other words,
every $2 \times 2$ matrix has rank 2 except for a set of measure zero.
In contrast, the typical rank of a $\ttt$ tensor is 2 or 3; that is, both ranks 2 and 3 occur on sets of positive measure.
(Additionally, the reader may have observed that we are implying the tensor rank can be larger than the maximum dimension of the tensor.
For tensors of size $\ttt$, the maximum possible rank is 3.)

A special polynomial in tensor entries, called the \emph{hyperdeterminant} and discussed in detail in \cref{sec:hyperd-rank}, can be used to determine
if a $\ttt$ tensor has one of the two typical ranks. A tensor is rank-2 if its 
hyperdeterminant is positive and rank-3 if its hyperdeterminant is negative.
The hyperdeterminant is zero on a 7-dimensional hypersurface which 
contains $\ttt$ tensors of all possible ranks (0, 1, 2, and 3).
A three-dimensional slice of the surface is visualized in \cref{fig:affine-intro} using \webtool{/hdet} which contains a button to generate this figure.

\begin{figure}[ht]
  \centering
  \includegraphics[width=0.7\textwidth]{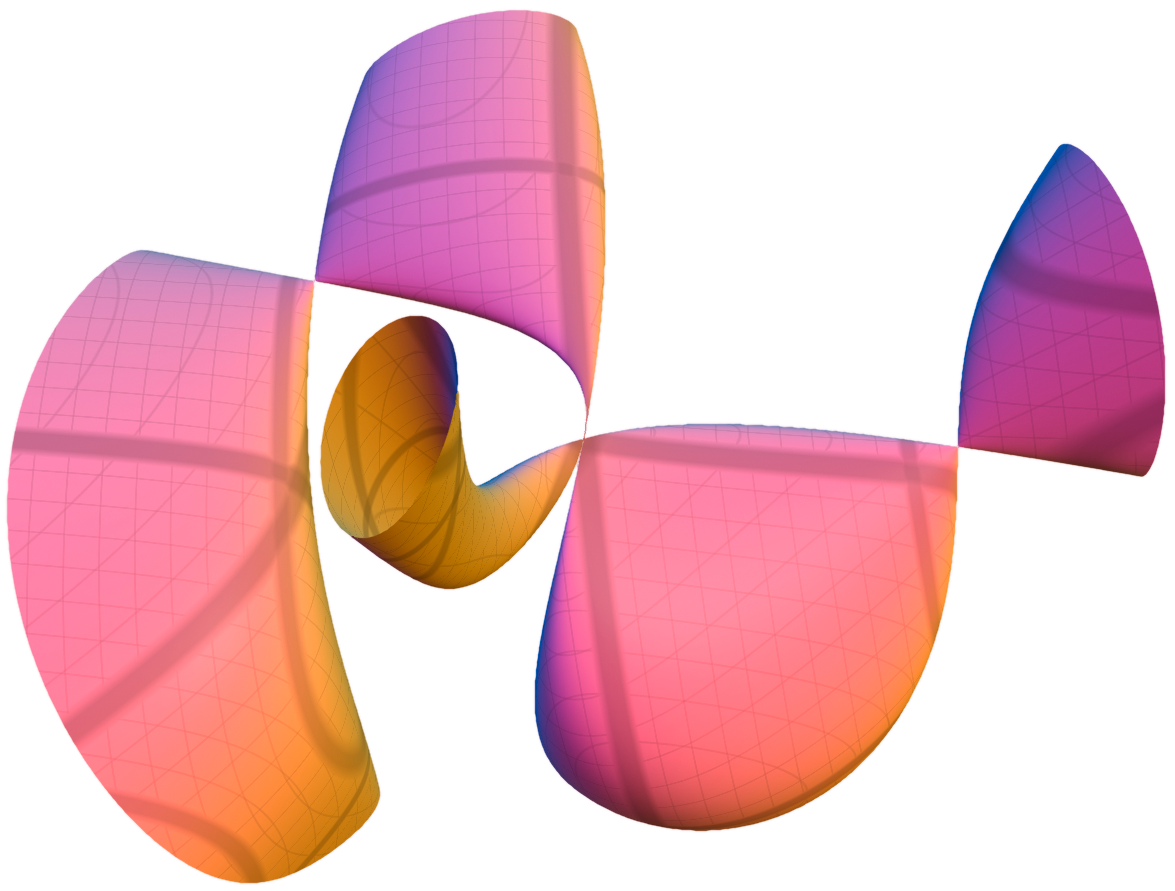}
  \caption{A spherically-clipped 3-dimensional projection of the 8-dimensional space of $\ttt$ tensors showing the 7-dimensional zero set of the hyperdeterminant.
  }
  \label{fig:affine-intro}
\end{figure}

A matrix or tensor is said to have a given \emph{canonical form} if it can 
be transformed into that form by an invertible transformation; 
see \cref{sec:orbits-canon}.
There are eight  canonical forms for $\ttt$ tensors, even though there are only four 
possible ranks (0, 1, 2, and 3). 
In contrast, the number of canonical forms of a matrix is equal to 
the number of possible ranks.

An \emph{orbit} of a canonical form is a set of matrices or tensors 
that are equivalent under invertible transformations.
The simplicity of the matrix case is evident from the topology and 
tesselation of its orbits in the ambient space.
The set of rank-2 matrices of size $2 \times 2$ is a single orbit of 
dimension 4. 
Its closure contains the set of rank-1 matrices, which is a single 
orbit of dimension 3.
The closure of rank-1 matrices contains the rank-0 (all zero entries) matrix.
This leads to a simple hierarchical structure on the orbits.
The situation for $\ttt$ tensors is markedly different.
There are two distinct orbits of sets of rank-2 and rank-3 tensors, each of which have dimension 8. 
There is another orbit of rank-3 tensors of dimension 7 and multiple distinct
orbits of rank-2 tensors, each of dimension 5. 
The closure of rank-3 tensors (the maximum possible rank) does not 
contains all tensors of lower rank, so the orbits are no longer 
hierarchical.
See \cref{sec:alg-geo-struct} for dimensions of matrix and tensor orbits, and \cref{sec:tesselation-topology} for visualizations.

These results on algebraic and geometric structure of $\ttt$ tensors give insight into ill-posed instances of low-rank tensor approximation.
Unlike matrices, for which the best rank-$r$ approximation is well-defined
according to the Eckart-Young theorem \cite{eckart_approximation_1936},
no rank-3 tensor of size $\ttt$ has a best rank-2 approximation \cite{de_silva_tensor_2008}.
In \cref{sec:ill-posedness}, we provide a geometric proof for this 
surprising property and provide intuitive explanations with 
the aid of singular geometry and computational visualizations.
The singularities (e.g., self-intersections) of the zero hyperdeterminant surface in \cref{fig:affine-intro} play an 
important role in the ill-posedness of low-rank approximation.

In \cref{sec:beyond-ttt}, we overview how the
algebraic and geometric structure of tensors changes when we go from 
$\ttt$ tensors to larger scales.
At the same time, we explain how several of the qualitative phenomena observed in this article persist in higher dimensions.
Open research problems for larger-sized tensors are also mentioned.  

\section{(Hyper)determinant and rank}
\label{sec:hyperd-rank}

The algebraic notion of determinant characterizes matrix rank, and an analogous concept plays a role in the characterization of tensor rank for $\ttt$ tensors.
Before we discuss this, we recall some definitions.

A \emph{semi-algebraic} subset of $\Real^n$ is a set of all points satisfying a finite Boolean combination of polynomial equalities and inequalities \cite[Definition 2.14]{bochnak_real_1998}.
This definition may seem limited, but there are a variety of ways to build up complicated semi-algebraic sets.
Indeed, finite combinations of semi-algebraic sets via set unions, intersections, complements, set differences, and Cartesian products also result in semi-algebraic sets.
Furthermore, the image of a semi-algebraic set under any \emph{polynomial map} (that is, a function whose output entries are polynomial functions of the input entries) is again a semi-algebraic set \cite[Proposition 2.2.7]{bochnak_real_1998}, which is a consequence of the Tarski-Seidenberg principle \cite[Section 1.4]{bochnak_real_1998}.

Informally, the \emph{local dimension} of the set $S \subset \Real^n$ at a point in $S$ refers to number of continuous degrees of freedom in $S$ around the point.
The \emph{dimension} of the set $S$ is then simply the maximum over all local dimensions.
For instance, a point 
has dimension zero, a line has dimension 1, a plane
has dimension 2, the surface of the unit sphere in $\mathbb{R}^3$ has dimension 2, the set of $m \times n$ matrices has dimension $mn$, and the set of $\ttt$ tensors has dimension 8.
Formally, since we will be working exclusively with sets defined by polynomials, the dimension refers to the \emph{semi-algebraic dimension} (defined precisely in \cite[Section 2.8]{bochnak_real_1998}).

A set $S \subset \Real^n$ is called (Lebesgue) \emph{measure-zero} if it can be covered by a countable union of $n$-dimensional cubes whose total volume can be made arbitrarily small; for a complete definition see, e.g., \cite{rudin_principles_1976}.
For example, a point is a set of measure zero with respect to a line, a curve is a set of measure zero with respect to a plane, and any set of dimension $\leq r-1$ is measure zero with respect to a set of dimension $r$.
Additionally, any countable union of measure zero sets is measure zero \cite{rudin_principles_1976}, including finite unions.
A set that is not measure zero has \emph{positive volume} or \emph{positive measure}.
We say a property of a point in $\mathbb{R}^n$ is \emph{generic} if it holds everywhere except for a set of measure zero.
For example, being nonzero is a generic property in $\Real$.
If a property holds on a set of positive measure, then we say it is \emph{typical}.
For example, being positive is a typical property in $\Real$.
Any generic property is typical, by definition.

\subsection{Matrix determinant}
The \emph{determinant} of a square matrix is a polynomial function of the matrix entries.
The determinant for the $2 \times 2$ matrix
$\begin{bsmallmatrix}
    a & b \\
    c & d
  \end{bsmallmatrix}$
is $ad - cb$.
A square matrix is non-singular (and hence full rank) if and only if its determinant is nonzero
\cite{horn_matrix_2017}.
More generally, an $m \times n$ matrix $\A$ is rank $r$ if and only if
the determinants of all square submatrices of size $(r+1) \times (r+1)$
are zero and
there exists a square submatrix of size $r \times r$ whose determinant is nonzero \cite{horn_matrix_2017}.

The determinant connects invertibility from linear algebra
to algebraic equations.  
It provides an algebraic way to show that full-rank matrices
are generic, as we now explain:
Observe that the set of $2 \times 2$ matrices
of rank 2 is a semi-algebraic set because it is the set
$\set{ \begin{bsmallmatrix} a & b \\ c & d \end{bsmallmatrix} : ad - cb \neq 0}$.
At any matrix of rank 2, the local dimension is 4, so the set has dimension 4 and positive volume.
This is because the determinant is a continuous function, so at every rank-2 matrix there exists an open ball where the determinant is nonzero.
On the other hand, the set of $2 \times 2$ matrices
of rank $\leq 1$ is the set
$\set{ \begin{bsmallmatrix} a & b \\ c & d \end{bsmallmatrix} : ad - cb = 0}.$
The dimension of this set is 3 since it must be the case that
$a = cb/d$ for generic $d \neq 0$.
Hence one of the coordinates is constrained, and there are only three continuous degrees of freedom.
Therefore the set of $2 \times 2$ matrices of rank $\leq 1$ 
is dimension 3 and measure zero; consequently, the set of $2 \times 2$ matrices of rank 2 is generic. 
More generally, since any rank-deficient matrix in $\Real^{m \times n}$ has a submatrix with determinant zero and 
the zero set of any nonzero polynomial on $\mathbb{R}^{m \times n}$ has dimension at most $mn-1$ {\cite[Section~2.8]{bochnak_real_1998}}, it holds that the set of low-rank matrices has dimension at most $mn - 1$.
Thus the property of being full rank is generic.

\subsection{Cayley's hyperdeterminant}
A polynomial characterization of rank is also possible for $\ttt$ tensors.
Consider the $\ttt$ tensor
\begin{equation}\label{eqn:T}
  \T
  =
  \begin{bmatrix}[c | c]
    \T(:,:,1) & \T(:,:,2)
  \end{bmatrix}
  =
  \begin{bmatrix}[c | c]
    \mbf{T}_1 & \mbf{T}_2
  \end{bmatrix}
  =
  \begin{tensoreight}
    t_{111} & t_{121} & t_{112} & t_{122} \\
    t_{211} & t_{221} & t_{212} & t_{222}
  \end{tensoreight}
  .
\end{equation}
Here, $\mbf{T}_1 \equiv \T(:,:,1)$ and $\mbf{T}_2 \equiv \T(:,:,2)$  are
called the \emph{frontal slices} of the tensor $\T$.
The \emph{hyperdeterminant}%
\footnote{
  In 1983, Kruskal surmised that a fourth degree polynomial can be used
to determine the rank of a $\ttt$ tensor \cite{kruskal_rank_1983,kruskal_rank_1989}.
In 1991, Ten Berge described Kruskal's polynomial as the discriminant of
$\det(\mbf{T}_{2} - \lambda \mbf{T}_{1})$ \cite{ten_berge_kruskals_1991}.
Independently in 1992, Gelfand, Kapranov, and Zelevinsky \cite{gelfand_hyperdeterminants_1992} published
work in algebraic geometry naming this Cayley's hyperdeterminant from Cayley's 1845 work \cite{cayley_theory_1845,cayley_collected_2009}.
} of $\T$ is
\begin{align}
  \label{eqn:hdet}
  \begin{split}
    \Delta(\T)
    = & \left[\frac{\det(\mbf{T}_1 + \mbf{T}_2) - \det(\mbf{T}_1 - \mbf{T}_2)}{2} \right]^2
    - 4 \det(\mbf{T}_1) \det(\mbf{T}_2)                                                     \\
    = & (t_{111}t_{222} + t_{112}t_{221} - t_{121}t_{212} - t_{122}t_{211})^2               \\
      & -4(t_{111}t_{221} - t_{121}t_{211})(t_{112}t_{222} - t_{122}t_{212}).
  \end{split}
\end{align}

The relationship between the hyperdeterminant with the geometry of sets of fixed tensor rank is studied carefully in \cref{sec:orbits-tensor-rank}, but for now we employ the following characterization \cite{ten_berge_kruskals_1991}:
\begin{equation}\label{eqn:hyperdet-rank-simple}
  \rank(\T) =
  \begin{cases}
    2 & \text{if } \Delta(\T)>0, \\
    3 & \text{if } \Delta(\T)<0.
  \end{cases}
\end{equation}
The case when $\Delta(\T)=0$ will be addressed shortly, see \cref{sec:hdet-zero}.

\subsubsection{Rank-2 and rank-3 tensors are typical}
\label{sec:rank-2-3-typical}
From \cref{eqn:hyperdet-rank-simple}, we can make a conclusion quite different from the situation for fixed size matrices:
there is no generic rank; instead, both ranks 2 and 3 are typical for $\ttt$ tensors.
In other words, the set of rank-2 tensors and the set of rank-3 tensors both have positive volume in $\Real^{\ttt}$, as we argue below.

We begin by showing that the set of rank-2 tensors is positive measure.
Observe that
\begin{equation}\label{eqn:rank-2-example}
  \Delta\left(
  \begin{tensoreight}[3ex]
    4 & 2 & 1 & 1 \\
    4 & 3 & 1 & 1
  \end{tensoreight}
  \right) = 1,
\end{equation}
and so this tensor is rank 2.
Since the hyperdeterminant is a continuous function of the tensor entries, there exists an open ball in $\Real^{\ttt}$ centered about this tensor
such that every point in the ball has strictly positive hyperdeterminant.
Such an open ball has positive measure, concluding the argument.
The argument for rank-3 tensors is analogous, based on the observation that
\begin{equation}\label{eqn:rank-3-example}
  \Delta\left(
  \begin{tensoreight}[3ex]
      1 & 0 &  0 & 1 \\
      0 & 1 & -1 & 0
    \end{tensoreight}
  \right) = -4,
\end{equation}
and so this tensor is rank-3 by the characterization \eqref{eqn:hyperdet-rank-simple}.

\subsubsection{Hyperdeterminant zero tensors}
\label{sec:hdet-zero}
Our characterization of tensors using the hyperdeterminant is incomplete because we have not discussed $\Delta(\T)=0$.
As it turns out, the hyperdeterminant alone is insufficient to determine the rank of the tensor:
if $\Delta(\T)=0$ the rank of $\T$ can be 0, 1, 2 or 3.
For example, let us consider how to determine the rank of the following tensor with  hyperdeterminant zero:
\begin{equation}\label{rank-1-example}
  \Delta\left(
  \begin{tensoreight}[3ex]
      1 & 2 & 3 & 6 \\
      1 & 2 & 3 & 6
    \end{tensoreight}
  \right) = 0.
\end{equation}

For this we need an additional tool.
The \emph{multilinear rank} \cite{kolda_tensor_2025}
of a tensor is the tuple of the ranks of its unfoldings:
\begin{displaymath}
  \mrank(\T) = \left( \rank(\TM{1}), \rank(\TM{2}), \rank(\TM{3}) \right) .
\end{displaymath}
The \emph{mode-$k$ unfoldings} \cite{kolda_tensor_2025} 
of a tensor $\T \in \Real^{\ttt}$ are denoted by $\TM{k}$ and given by
\begin{align}\label{eqn:unfoldings}
  \begin{split}
    \TM{1} & =
    \begin{bmatrix}
      t_{111} & t_{121} & t_{112} & t_{122} \\
      t_{211} & t_{221} & t_{212} & t_{222}
    \end{bmatrix}, \\
    \TM{2} & =
    \begin{bmatrix}
      t_{111} & t_{211} & t_{112} & t_{212} \\
      t_{121} & t_{221} & t_{122} & t_{222}
    \end{bmatrix}, \quad \\ %
    \TM{3} & =
    \begin{bmatrix}
      t_{111} & t_{211} & t_{121} & t_{221} \\
      t_{112} & t_{212} & t_{122} & t_{222}
    \end{bmatrix}. \\
  \end{split}
\end{align}

\begin{exercise}[Multilinear rank lower bounds tensor rank]
  \label{exr:mrank-rank}
  Show that any tensor $\T \in \Real^{\ttt}$ satisfies
 $\max \left( \rank(\TM{1}), \rank(\TM{2}), \rank(\TM{3}) \right) \leq \rank(\T)$.
 In other words, show that the maximum flattening rank is a lower bound for the rank.
 \textit{Hint:} Use the fact that $\T = \sum_{i=1}^r \mathbf{a}_i \circ \mathbf{b}_i \circ \mathbf{c}_i$ for some $\mathbf{a}_i, \mathbf{b}_i, \mathbf{c}_i$, where $r = \rank(\T)$.
\end{exercise}

With both the hyperdeterminant and multilinear rank at our disposal,
we can compute the rank of any tensor in $\Real^{\ttt}$ \cite{kruskal_rank_1983,kruskal_rank_1989,ten_berge_kruskals_1991,de_silva_tensor_2008}.
\Cref{eqn:hyperdet-rank-simple} specifies the rank if $\Delta(\T) \neq 0$.
If $\Delta(\T)=0$, then
\begin{equation}\label{eqn:rank-determination}
  \rank(\T) =
  \begin{cases}
    0 & \text{if } \mrank(\T) = (0,0,0), \\
    1 & \text{if } \mrank(\T) = (1,1,1), \\
    3 & \text{if } \mrank(\T) = (2,2,2), \\
    2 & \text{otherwise}.                \\
  \end{cases}
\end{equation}
In the last case, $\mrank(\T)$ is some permutation of $(2,2,1)$.
All realized combinations of hyperdeterminant, multilinear rank, and rank are cataloged in  \cref{tab:tensor-orbits};
the other columns of \cref{tab:tensor-orbits} will be explained in subsequent sections.
 This table is adapted from de Silva and Lim \cite{de_silva_tensor_2008}, building on Gelfand, Kapranov, and Zelevinsky \cite{gelfand_hyperdeterminants_1992}.
We remark that the definition of the hyperdeterminant from \cite{gelfand_hyperdeterminants_1992} (of which \cref{eqn:hdet} is a special case for $\ttt$ tensors) applies to a variety of tensor shapes beyond $\ttt$.  However, its complexity grows drastically with tensor size and its role in the characterization of rank is less clear at present \cite{de_silva_tensor_2008}.

\begin{exercise}
  Using \cref{eqn:rank-determination},
 determine the rank of the tensor in \cref{rank-1-example}.
\end{exercise}

\begin{exercise}[\textit{Challenge}]
  Without using \cref{eqn:rank-determination},
  prove that if two unfoldings of a $\ttt$ tensor have rank 1,
  then the tensor is rank 1.
\end{exercise}

\subsection{Rank-constrained sets are semi-algebraic}
\label{sec:rank-semialgebraic}
We will make heavy use of sets of matrices or tensors of fixed or bounded rank, e.g., the set of $\ttt$ tensors of rank $\leq 2$.
A fundamental but simple result is that all such sets are semi-algebraic (described by a finite number of polynomial equalities and inequalities), which can be shown using the minimal amount of technology for semi-algebraic sets introduced at the start of \cref{sec:hyperd-rank}.

We previously discussed how matrices are semi-algebraic using the determinant. Another way to
see that matrices of bounded rank are semi-algebraic is by the definition of matrix rank: $\rank(\A \B^{\Tr}) \leq r$ if $\A$ and $\B$ have $r$ columns.
Since the set
$\Real^{m \times r} \times \Real^{n \times r}$
is semi-algebraic and that the map
$(\A, \B) \mapsto \A \B^{\Tr}$
is polynomial, we have that the image (all rank $\leq r$ matrices) is a semi-algebraic set \cite[Proposition 2.2.7]{bochnak_real_1998}.
Sets of fixed matrix rank can be realized as set differences of sets with bounded matrix rank, and hence are also semi-algebraic.
The proof for tensors uses an analogous factorization to parameterize bounded rank tensors, but is otherwise identical \cite[Theorem 6.2]{de_silva_tensor_2008}.

\begin{exercise}
    Show that the set of $m \times n \times p$ tensors of rank $\leq r$ is semi-algebraic.
\end{exercise}

The significance of these statements is that, since we can check membership in a semi-algebraic set by evaluating a finite number of polynomials, we can determine the rank of a matrix or tensor by evaluating a finite number of polynomials \cite[Theorem 6.3]{de_silva_tensor_2008}.
For matrix rank, this is achieved by evaluating the minors.
For tensor rank, however, we already know that the hyperdeterminant is insufficient for $\ttt$ tensors, and a full semi-algebraic description also involves the multilinear rank.
In general, a concrete semi-algebraic description in terms of polynomials is not known for $m \times n \times p$ tensors beyond rank-$2$ \cite{seigal_real_2017}.

\begin{table}
  \begin{center}
    \begin{tblr}{c | c | c | c | c | c | c | c}
      orbit                & canonical                             & hyper- & multi-    & rank & border & dimen- & proba-              \\[-1ex]
      & form                                  & deter- & linear    &      & rank   & sion   & bility              \\[-1ex]
      &                                       & minant & rank      &      &        &        &                     \\ \hline
      $G_3$                & \tabletensor{1}{0}{0}{-1}{0}{1}{1}{0} & $-$    & $(2,2,2)$ & 3    & 3      & 8      & $1 - \frac{\pi}{4}$ \\ \hline
      $G_2$                & \tabletensor{1}{0}{0}{0}{0}{0}{0}{1}  & $+$    & $(2,2,2)$ & 2    & 2      & 8      & $\frac{\pi}{4}$     \\ \hline
      $D_3$                & \tabletensor{1}{0}{0}{1}{0}{0}{1}{0}  & 0      & $(2,2,2)$ & 3    & 2      & 7      & 0                   \\ \hline
      $D_2$                & \tabletensor{1}{0}{0}{0}{0}{1}{0}{0}  & 0      & $(2,2,1)$ & 2    & 2      & 5      & 0                   \\ \hline
      $D_2^\prime$         & \tabletensor{1}{0}{0}{1}{0}{0}{0}{0}  & 0      & $(1,2,2)$ & 2    & 2      & 5      & 0                   \\ \hline
      $D_2^{\prime\prime}$ & \tabletensor{1}{0}{0}{0}{0}{0}{1}{0}  & 0      & $(2,1,2)$ & 2    & 2      & 5      & 0                   \\ \hline
      $D_1$                & \tabletensor{1}{0}{0}{0}{0}{0}{0}{0}  & 0      & $(1,1,1)$ & 1    & 1      & 4      & 0                   \\ \hline
      $D_0$                & \tabletensor{0}{0}{0}{0}{0}{0}{0}{0}  & 0      & $(0,0,0)$ & 0    & 0      & 0      & 0
    \end{tblr}
    \caption{
      Orbits of $\ttt$ real tensors and associated quantities and invariants.
      $D$ orbits are ``degenerate'' (non-typical) while $G$ orbits are ``generic'' (typical).
      Adapted from \cite{kruskal_rank_1983,de_silva_tensor_2008}.
      The results in the probability column are due to Bergqvist and assume tensor entries are standard normal i.i.d.~\cite{bergqvist_exact_2013}.
      Understanding and justifying this table will be a focus of upcoming sections.
    }
    \label{tab:tensor-orbits}
  \end{center}
\end{table}

\section{Orbits and canonical forms}
\label{sec:orbits-canon}
In this section we explain how to use invertible transformations, or more precisely an appropriate group action, to partition the space of matrices or tensors into subsets of constant rank.

\subsection{Groups and group actions}
We begin by recalling the mathematical concepts of groups, group actions, and their orbits.

A \emph{group} $G$ is a set along with a binary operation
$\ast: G \times G \rightarrow G$ such that the following properties hold:
\begin{itemize}
  \item associativity: $a \ast b \ast c = (a \ast b ) \ast c = a \ast (b \ast c)$ for all $a,b,c \in G$,
  \item identity: there exists $e \in G$ such that
        $e \ast a = a \ast e = a$ for all $a \in G$, and
  \item inverse: for all $a \in G$ there exists $b \in G$
        such that $b \ast a = a \ast b = e$.
\end{itemize}
We are interested in groups where the elements are matrices and the operator $\ast$ is matrix multiplication.
Three important matrix groups are the general linear, special linear, and orthogonal groups,
defined respectively as
\begin{align*}
    \GL(n) &= \set{ \Mx{M} \in \Real^{n \times n} : \det(\Mx{M}) \neq 0 }, \\
    \SL(n) &= \set{ \Mx{M} \in \Real^{n \times n} : \det(\Mx{M}) = 1}, \\ %
    \O(n) &= \set{ \mbf{M} \in \Real^{n \times n} :
      \mbf{M}^{\Tr} \mbf{M} = \mbf{M} \Mx{M}' = \mbf{I}_{n} }.
\end{align*}

\begin{exercise}
  Prove that $\GL$, $\SL$, and $\O$ are groups.
\end{exercise}

In addition to the binary operation $\ast$ occurring within the group, we can define an operation of group elements on objects \emph{outside} the group.
If we have a group $G$ and a set $X$, we call a map $\cdot: G \times X \rightarrow X$ a \emph{group action} if for all $x \in X$ and $g, h \in G$ it holds
$g \cdot (h \cdot x) = (g \ast h) \cdot x$ 
and $e \cdot x = x$.
For example, if $G=\O(n)$ and $X = \Real^n$, we can define a group action via matrix-vector multiplication.

Given a group $G$ with operator $\ast$ and action $\cdot$ on $X$,
the \emph{orbit} of $x \in X$ under $G$ is denoted as $G \cdot x$ and
defined to be the set of images of $x$ under  elements of $G$, i.e.,
\begin{displaymath}
  G \cdot x = \set{ g \cdot x : g \in G } \subset X.
\end{displaymath}
Each point in $X$ gives rise to an orbit under a group action. Furthermore, it is straightforward to prove that two orbits are either disjoint or equal.
Therefore, the action of a group $G$ induces a partition of $X$ into orbits.
To make the concept of an orbit concrete we give a simple example.

\begin{example}
  Consider the vector
  $\mbf{x} = \begin{bsmallmatrix} 1 \\ 0\end{bsmallmatrix} \in \Real^2$
 and the action of $\O(2)$ on $\mathbb{R}^2$ via matrix-vector multiplication.
  Then the orbit is $\O(2) \cdot \mbf{x} = \set{\mbf{y} \in \Real^2 | \norm{\mbf{y}}_2 =1}$ as shown in \cref{fig:orbit}.  Following the same logic, other orbits under the group action are circles of other radii centered at the origin, as well as the singleton $\{\begin{bsmallmatrix} 0 \\ 0\end{bsmallmatrix} \}$.
\end{example}

\begin{figure}[ht]
  \centering
  \begin{tikzpicture}[scale=0.8]
    \draw[ultra thick, color=dark] (1,0) arc (0:360:1);
    \filldraw[light] (1,0) circle (3pt);
  \end{tikzpicture}
  \caption{Orbit (blue circle) of
    $\mbf{x} = \begin{bsmallmatrix} 1 \\ 0\end{bsmallmatrix} \in \Real^2$
    (orange dot) under the action of $\O(2)$}
  \label{fig:orbit}
\end{figure}

Given groups $G_1$ and $G_2$ with respective operators $\ast_1$ and $\ast_2$, one can define the \emph{product group} $G = G_1 \times G_2$. 
As a set, the product group is the Cartesian product.
The group operation $\ast$ for the product group $G$ is simply the operator $\ast_1$ in the first argument and $\ast_2$ in the second:
$(g_1,g_2) \ast (h_1,h_2) = (g_1 \ast_1 h_1, g_2 \ast_2 h_2)$.
A product group is itself a group.
For example, the orbit of $\A \in \mathbb{R}^{m \times n}$ under $\GL(m) \times \GL(n)$ is
\begin{displaymath}
  \left(\GL(m) \times \GL(n)\right) \cdot \A \equiv
  \set{\U \A \V^{\Tr}: \U \in \GL(m), \V \in \GL(n)},
\end{displaymath}
where the group action is defined to be two-sided matrix multiplication with a transpose on the right side.

\begin{exercise}[\textit{Challenge}]
  Consider the space $\Real^{m \times n}$ under the group action of
  $\O(m) \times \O(n)$.
  Into how many sets is $\Real^{m \times n}$ partitioned by this group action?  Describe the orbits.
  \emph{Hint}: Consider the singular value decomposition (SVD) of matrices \cite{GoVa96}.
\end{exercise}

\subsection{Orbits and matrix rank}
\label{sec:orbit-matrix}
Now we examine the role of orbits in matrix rank.
Without loss of generality (since rank is unaffected by transposition),
$m \geq n$.

\begin{proposition}[Matrix orbits]
  \label{prp:equiv}
  If $\Mx{M} \in \Real^{m \times n}$ has rank $r$ and $m \geq n$, then %
  \begin{displaymath}
      \left(\GL(m) \times \GL(n)\right) \cdot \Mx{M} =
      \left(\GL(m) \times \GL(n)\right) \cdot \C_r
      \quad\text{where}\quad
      \C_r \equiv
      \begin{bsmallmatrix}
          \Mx{I}{r} & \Mx{0} \\
          \Mx{0} & \Mx{0}
      \end{bsmallmatrix} \in \Real^{m \times n}.
  \end{displaymath}
\end{proposition}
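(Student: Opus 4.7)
The plan is to establish the equality of orbits by proving the single containment $\Mx{M} \in \left(\GL(m) \times \GL(n)\right) \cdot \Mx{C}_r$; since orbits of any group action partition the ambient set into disjoint equivalence classes, a single shared element forces the two orbits to coincide.

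I would invoke a full-rank factorization $\Mx{M} = \Mx{A} \Mx{B}^{\Tr}$, where $\Mx{A} \in \Real^{m \times r}$ and $\Mx{B} \in \Real^{n \times r}$ each have full column rank (this exists because $\rank(\Mx{M}) = r$). The $r$ columns of $\Mx{A}$ are linearly independent in $\Real^{m}$ and can therefore be completed to a basis, yielding an invertible matrix $\Mx{U} \in \GL(m)$ whose first $r$ columns are those of $\Mx{A}$. Applying the same construction to $\Mx{B}$ produces $\Mx{V} \in \GL(n)$ whose first $r$ columns are those of $\Mx{B}$; the hypothesis $m \geq n \geq r$ guarantees there is room to extend. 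A block computation then shows $\Mx{U} \Mx{C}_r \Mx{V}^{\Tr} = \Mx{M}$: the product $\Mx{U} \Mx{C}_r$ keeps the first $r$ columns of $\Mx{U}$ (exactly $\Mx{A}$) and zeros out the remaining $n - r$ columns, after which right-multiplication by $\Mx{V}^{\Tr}$ contracts against the top $r$ rows of $\Mx{V}^{\Tr}$, namely $\Mx{B}^{\Tr}$, giving $\Mx{A} \Mx{B}^{\Tr} = \Mx{M}$.

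An alternative route would start from the SVD $\Mx{M} = \Mx{U}_0 \bm{\Sigma} \Mx{V}_0^{\Tr}$ with $\Mx{U}_0 \in \O(m)$ and $\Mx{V}_0 \in \O(n)$, absorbing the $r$ nonzero singular values into $\Mx{D} = \diag(\sigma_1, \ldots, \sigma_r, 1, \ldots, 1) \in \GL(m)$ so that $\bm{\Sigma} = \Mx{D} \Mx{C}_r$; then $\Mx{M} = (\Mx{U}_0 \Mx{D}) \Mx{C}_r \Mx{V}_0^{\Tr}$ exhibits the same membership, with $\Mx{U}_0 \Mx{D} \in \GL(m)$ and $\Mx{V}_0 \in \O(n) \subset \GL(n)$. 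I do not expect a serious obstacle: the only substantive ingredient is the basis-extension step, and the only mildly subtle move is promoting a single orbit membership into equality of orbits via the standard partition argument for group actions.
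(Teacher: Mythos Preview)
Your proposal is correct. The paper does not give a full proof but leaves it as an exercise with the hint to use the SVD to factor any rank-$r$ matrix as $\Mx{X}\C_r\Mx{Y}^{\Tr}$ with $\Mx{X}\in\GL(m)$, $\Mx{Y}\in\GL(n)$; your second (SVD) route is exactly this, and your primary route via a full-rank factorization plus basis extension is a slightly more elementary variant that achieves the same thing without appealing to the SVD.
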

Noting that $(\A,\B) \in \GL(m) \times \GL(n)$ implies $\rank(\A \C_r \B^{\Tr}) = r$, the set of rank-$r$ matrices is precisely the orbit of the partial identity matrix $\C_r$ under the action of $\GL(m) \times \GL(n)$.

We call the matrix $\C_r$ in \cref{prp:equiv} a \emph{canonical form}, as it
represents a simple rank-$r$ matrix and any other rank-$r$ matrix can be transformed into $\C_r$.
This particular choice of orbit representative is simply a convention, and any other rank-$r$ matrix would also work.
Since every matrix is in the orbit with canonical form $\Mx{C}{r}$ for some $r \in \set{0,...,n}$, the space $\Real^{m \times n}$ is partitioned into $n+1$ orbits.

\begin{exercise}
  Prove \cref{prp:equiv}.
  \emph{Hint}: Use the SVD to show that any rank-$r$ matrix has a factorization $\Mx{X} \C_r \Mx{Y}^{\Tr}$ where
  $\Mx{X} \in \GL(m), \Mx{Y} \in \GL(n)$.
\end{exercise}
\begin{exercise}
    By \cref{prp:equiv} any $m \times n$ matrix of rank $r$ can be written as
    $\Mx{X}\C_r \Mx{Y}^{\Tr}$.
    Explain why the matrices $\Mx{X}$ and $\Mx{Y}$ are not unique.
\end{exercise}

\subsection{Orbits and tensor rank}\label{sec:orbits-tensor-rank}
Here we consider the orbits of real $\ttt$ tensors under the group action of
$\GL(2) \times \GL(2) \times \GL(2)$.
We will demonstrate that this partitions the space of $\ttt$ tensors into a finite number of orbits.

\subsubsection{Tensor times matrix (TTM)}
The \emph{tensor-times-matrix (TTM)} product,
denoted $\T \times_k \U$
transforms a tensor $\T$ by multiplying it in mode-$k$ by the
matrix $\U$. If we say
$\Tn{S} = \T \times_k \U$, this is equivalent to saying that their
mode-$k$ unfoldings, defined in \cref{eqn:unfoldings},  are related as
\begin{displaymath}
  \Mx{S}{(k)} = \U\TM{k}.
\end{displaymath}
The TTM has the following properties:
\begin{gather}
  \label{eqn:mode-k-double}
  (\T \times_k \Mx{X}) \times_k \Mx{Y} = \T \times_k \Mx{YX}, \\
  \label{eqn:associative}
  (\T \times_k \Mx{X}) \times_{\ell} \Mx{Y} 
  = (\T \times_{\ell} \Mx{Y}) \times_k \Mx{X} =  \T \times_k \Mx{X} \times_{\ell} \Mx{Y}
  \quad\text{if}\quad k \neq \ell.
\end{gather}
A TTM product computed in multiple modes is the \emph{multi-TTM} operation, also known as the Tucker product.
For a tensor
$\T \in \Real^{m \times n \times p}$ and matrices
$\U \in \Real^{q \times m}$,
$\V \in \Real^{r \times n}$, and
$\W \in \Real^{s \times p}$,
the multi-TTM operation is written as
\begin{displaymath}
  \Tn{S} = \T \times_1 \U \times_2 \V \times_3 \W.
\end{displaymath}
The resulting tensor $\Tn{S}$ is of size $q \times r \times s$ and its
$(\alpha,\beta,\gamma)$-entry is defined explicitly by
\begin{displaymath}
  s_{\alpha \beta \gamma} = \sum_{i=1}^m \sum_{j=1}^n \sum_{k=1}^p
  t_{ijk} u_{\alpha i} v_{\beta j} w_{\gamma k} .
\end{displaymath}
The multi-TTM gives a group action of $\GL(2) \times \GL(2) \times \GL(2)$ on $\ttt$ tensors.

\begin{example}[TTM]
  \label{exa:reduction}
  Consider the tensor from \cref{eqn:rank-2-example}:
  \begin{align}
    \T =
    \begin{tensoreight}
      4 & 2 & 1 & 1 \\
      4 & 3 & 1 & 1
    \end{tensoreight} .
  \end{align}
  Single-mode TTM examples are:
  \begin{align}
    \begin{split}
      \T \times_3
      \begin{bmatrix}[r r]
         1 & -4 \\
         0 &  1
      \end{bmatrix}
      &=
      \begin{tensoreight}
         0 & -2 & 1 & 1 \\
         0 & -1 & 1 & 1
      \end{tensoreight}
      \equiv \Tn{U}, \\
      \Tn{U} \times_1
      \begin{bmatrix}[r r]
         1 & -1 \\
         0 &  1
      \end{bmatrix}
      &=
      \begin{tensoreight}
        0 & -1 & 0 & 0 \\
        0 & -1 & 1 & 1
      \end{tensoreight}
        \equiv \Tn{V}, \\
      \Tn{V} \times_1
      \begin{bmatrix}[r r]
         -1 & 0 \\
         -1 & 1
      \end{bmatrix}
      &=
      \begin{tensoreight}
        0 & 1 & 0 & 0 \\
        0 & 0 & 1 & 1
      \end{tensoreight}
      \equiv \Tn{W}, \\
      \Tn{W} \times_2
      \begin{bmatrix}[r r]
          1 & 0 \\
         -1 & 1
      \end{bmatrix}
      &=
      \begin{tensoreight}
        0 & 1 & 0 & 0 \\
        0 & 0 & 1 & 0
      \end{tensoreight}\!\!.
    \end{split}
  \end{align}
  Combining all four operations into a multi-TTM according to \cref{eqn:associative}, one finds the same result:
  \begin{align*}
      \T \times_1
      \begin{bmatrix}[r r]
        -1 & 1 \\
        -1 & 2
      \end{bmatrix}
      \times_2
      \begin{bmatrix}[r r]
         1 & 0 \\
        -1 & 1
      \end{bmatrix}
      \times_3
      \begin{bmatrix}[r r]
         1 & -4 \\
         0 &  1
      \end{bmatrix}
      &=
      \begin{tensoreight}
         0 & 1 & 0 & 0 \\
         0 & 0 & 1 & 0
      \end{tensoreight}
  \end{align*}
  where the two mode-1 matrices have been combined into the appropriate product.
\end{example}

\subsubsection{Group action on tensors and invariants} %
The group action of  $\GL(2) \times \GL(2) \times \GL(2)$ 
on $\ttt$ tensors
is natural in the sense that important properties are invariant under the action,
including rank,
multilinear rank,
sign of the hyperdeterminant, and
border rank (to be discussed in \cref{sec:ill-posedness})
\cite{de_silva_tensor_2008}.
Further, the hyperdeterminant is a \emph{relative invariant}
under of the action of $\GL(2) \times \GL(2) \times \GL(2)$, here meaning
\begin{displaymath}
  \Delta( \T \times_1 \U \times_2 \V \times_3 \W )
  = \Delta(\T) \det(\U)^2 \det(\V)^2 \det(\W)^2 .
\end{displaymath}

\begin{exercise}
    Show that multilinear rank is invariant under the group action of
    $\GL(2) \times \GL(2) \times \GL(2)$ acting by multi-TTM.
\end{exercise}

\subsubsection{Orbits of \texorpdfstring{$\ttt$}{2 x 2 x 2} tensors}
We can now prove that \cref{tab:tensor-orbits} above provides an exhaustive list of orbits for real $\ttt$ tensors.
The following result explains the second column of \cref{tab:tensor-orbits}, which lists the canonical forms defined by the action of $\GL(2) \times \GL(2) \times \GL(2)$.

\begin{theorem}[Tensor orbits {\cite[Theorem 7.1]{de_silva_tensor_2008}}]
  \label{thm:tensor-orbits}
  \cref{tab:tensor-orbits} contains all canonical forms of real $\ttt$ tensors.
  In other words,
  every tensor in $\Real^{\ttt}$ may be brought to one (and only one) of these forms through the action $\GL(2) \times \GL(2) \times \GL(2)$.
\end{theorem}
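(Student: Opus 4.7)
The approach is to prove the two halves of the theorem separately: (i) the eight canonical forms are pairwise nonequivalent, and (ii) every $\ttt$ tensor can be brought to one of them.

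Part (i) is immediate from the invariants recorded just above---rank, multilinear rank, and the sign of $\Delta$ are preserved by the $\GL(2)^3$-action. Inspecting \cref{tab:tensor-orbits} verifies that every pair of the eight forms is distinguished by at least one such invariant: $G_2$ and $G_3$ by the sign of $\Delta$; $\{G_2,G_3\}$ versus $D_3$ by nonzero versus zero $\Delta$; the three $D_2$ variants by which unfolding has rank~$1$; and all remaining pairs by rank or by multilinear rank. Hence no two listed forms lie in the same orbit.

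For part (ii), I would proceed by case analysis on $\mrank(\T)$. The degenerate cases are quick. If $\mrank=(0,0,0)$ then $\T=0=D_0$. If $\mrank=(1,1,1)$ then $\T=\va\circ\vb\circ\vc$ for nonzero vectors, and sending each to $\mbf{e}_1$ by a $\GL(2)$ factor yields $D_1$. If $\mrank$ is a permutation of $(2,2,1)$, say with mode-$3$ of rank~$1$, the frontal slices $\mbf{T}_1,\mbf{T}_2$ are proportional; a mode-$3$ TTM kills one, and a mode-$1$ TTM by $\mbf{T}_1^{-1}$ reduces the survivor to $\mbf{I}$, producing $D_2$ (with $D_2^\prime$ and $D_2^{\prime\prime}$ obtained symmetrically).

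The substantive case is $\mrank(\T)=(2,2,2)$. First I would show that the pencil $\mbf{T}_1+\lambda\mbf{T}_2$ is regular: if $\det(\mbf{T}_1+\lambda\mbf{T}_2)\equiv 0$ as a polynomial in $\lambda$, then both slices are singular rank-$1$ matrices, and a short computation shows they must share a common row or column direction, collapsing mode-$1$ or mode-$2$ rank below~$2$ and contradicting $\mrank=(2,2,2)$. So some $\lambda$ makes $\det(\mbf{T}_1+\lambda\mbf{T}_2)\neq 0$; applying that mode-$3$ TTM and then mode-$1$ by $\mbf{T}_1^{-1}$ reaches slices $(\mbf{I},\mbf{M})$. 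A further transformation by $\mbf{P}^{-1}$ in mode-$1$ and $\mbf{P}^{\Tr}$ in mode-$2$ preserves $\mbf{I}$ as the first slice and conjugates $\mbf{M}$, so $\mbf{M}$ may be placed into real canonical form. A direct computation gives $\Delta((\mbf{I},\mbf{M}))=(\operatorname{tr}\mbf{M})^2-4\det\mbf{M}$, the discriminant of the characteristic polynomial of $\mbf{M}$, which by the relative invariance of $\Delta$ has the same sign as $\Delta(\T)$. Three sub-cases then follow: if $\Delta(\T)>0$, $\mbf{M}$ diagonalizes over $\Real$ with distinct eigenvalues and a final mode-$3$ TTM sending the resulting factors to the standard basis yields $G_2$; if $\Delta(\T)<0$, $\mbf{M}$ takes the form $\alpha\mbf{I}+\beta\mbf{R}$ with $\mbf{R}=\bigl[\begin{smallmatrix}0 & -1\\ 1 & 0\end{smallmatrix}\bigr]$, and an affine mode-$3$ TTM yields $G_3$; if $\Delta(\T)=0$, the repeated eigenvalue forces $\mbf{M}$ to be a nontrivial Jordan block (else slices $(\mbf{I},\alpha\mbf{I})$ would drop mode-$3$ rank to~$1$), and a mode-$3$ shift followed by permutations in modes~$2$ and~$3$ reaches $D_3$. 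The main obstacle I anticipate is the pencil-regularity claim---every other step is standard $2\times 2$ canonical form theory combined with careful TTM bookkeeping, but without regularity the reduction to $(\mbf{I},\mbf{M})$ breaks down.
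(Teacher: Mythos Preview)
Your proof is correct. The paper takes a closely related but differently organized route: instead of splitting on the invariant $\mrank(\T)$, it splits on $\rank(\mbf{T}_1)$, the rank of one fixed frontal slice, and handles the three cases $\rank(\mbf{T}_1)\in\{0,1,2\}$ by direct elimination. In its Case~3 the first slice is already invertible, so no pencil-regularity argument is needed, and the reduction to slices $(\mbf{I},\mbf{M})$ followed by the real Jordan form of $\mbf{M}$ proceeds exactly as in your proposal. The trade-off is that the paper's case split is not along orbit invariants, so several canonical forms ($D_1$, $D_2$, $D_3$, $G_2$) reappear in more than one case and have to be matched up at the end; your multilinear-rank split is cleaner in that each stratum is handled exactly once, at the cost of the pencil-regularity lemma you flagged as the main obstacle. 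That lemma is fine: writing the two rank-$1$ slices as $\mbf{u}_i\mbf{v}_i^{\Tr}$, the identity $\det(\mbf{u}_1\mbf{v}_1^{\Tr}+\lambda\,\mbf{u}_2\mbf{v}_2^{\Tr})=\lambda\det[\mbf{u}_1\;\mbf{u}_2]\det[\mbf{v}_1\;\mbf{v}_2]$ shows that an identically vanishing pencil determinant forces collinear $\mbf{u}_i$'s or collinear $\mbf{v}_i$'s, which drops the mode-$1$ or mode-$2$ flattening rank below~$2$.
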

\begin{proof}
  A full proof proceeds by an exhaustive analysis of cases depending on the ranks of slices of $\T \in \Real^{\ttt}$.
  In each case below, TTM operations are used to perform reductions on the tensor.  Note that a permutation of slices of the tensor along one of its modes is also a TTM operation.

  \textbf{Case 1: $\rank(\mbf{T}_1) = 0$.}
  In this case, we have 
  \begin{align*}
    \T = \begin{tensoreight}[3ex]
           0 & 0 & \times & \times \\
           0 & 0 & \times & \times
         \end{tensoreight} ,
  \end{align*}
  where we use $\times$ to denote an element whose value is unknown.
  From here, TTM with any
  $(\U,\V,\mbf{I}_2) \in \GL(2) \times \GL(2) \times \GL(2)$
  will preserve the zero matrix in the first frontal slice, while for appropriately chosen $\U$ and $\V$ it will bring the second frontal slice into one of the three following forms depending on $\text{rank}(\mbf{T}_2)$:
  \begin{align*}
    \begin{tensoreight}[3ex]
      0 & 0 & 0 & 0 \\
      0 & 0 & 0 & 0
    \end{tensoreight}
    ,\quad
    \begin{tensoreight}[3ex]
      0 & 0 & 1 & 0 \\
      0 & 0 & 0 & 0
    \end{tensoreight}
    ,\quad \text{or} \quad
    \begin{tensoreight}[3ex]
      0 & 0 & 1 & 0 \\
      0 & 0 & 0 & 1
    \end{tensoreight}.
  \end{align*}
  These three tensors are (after possible mode-3 permutations) equivalent to the canonical forms of $D_0$, $D_1$, and $D_2$, respectively.

  \textbf{Case 2: $\rank(\mbf{T}_1) = 1$.}
  Using elimination matrices and possibly permutations in modes 1 and 2, such a $\T$ can be brought into the form
  \begin{align*}
    \begin{tensoreight}[3ex]
      1 & 0 & a & b \\
      0 & 0 & c & d
    \end{tensoreight}.
  \end{align*}
  
  If $d \neq 0$, then the tensor may be brought to the canonical form of orbit $G_2$, i.e., 
  \begin{align*}
    \begin{tensoreight}[3ex]
      1 & 0 & 0 & 0 \\
      0 & 0 & 0 & 1
    \end{tensoreight}
  \end{align*}
  using mode-1 and 2 elimination matrices to eliminate $b$ and $c$, followed by a mode-3 operation to eliminate the remaining $(1,1,2)$ entry.
  
  If $d = 0$, mode-3 elimination can be used to bring the tensor into the form
  \begin{align*}
    \begin{tensoreight}[3ex]
      1 & 0 & 0 & b \\
      0 & 0 & c & 0
    \end{tensoreight}.
  \end{align*}
  Entries $b$ and $c$ can be independently normalized using mode-2 and mode-1 operations, respectively. Accounting for the fact that $b$ or $c$ could be zero, we obtain the following four possible forms
  \begin{align*}
    \begin{tensoreight}[3ex]
      1 & 0 & 0 & 0 \\
      0 & 0 & 0 & 0
    \end{tensoreight}, \;
    \begin{tensoreight}[3ex]
      1 & 0 & 0 & 1 \\
      0 & 0 & 0 & 0
    \end{tensoreight}, \;
    \begin{tensoreight}[3ex]
      1 & 0 & 0 & 0 \\
      0 & 0 & 1 & 0
    \end{tensoreight}, \;
    \begin{tensoreight}[3ex]
      1 & 0 & 0 & 1 \\
      0 & 0 & 1 & 0
    \end{tensoreight}.
  \end{align*}
  These are the canonical forms of orbits $D_1$, $D_2^\prime$, $D_2^{\prime\prime}$, and $D_3$, respectively.

  \textbf{Case 3: $\rank(\mbf{T}_1) = 2$.}
    Multiplying by $\mbf{T}_1^{-1}$ in the first mode we can achieve the form
  \begin{align*}
    \begin{tensoreight}[3ex]
      1 & 0 & \times & \times \\
      0 & 1 & \times & \times
    \end{tensoreight} .
  \end{align*}
  From here, one can act by some
  $(\U,\U^{-\Tr},\mbf{I}_2) \in \GL(2) \times \GL(2) \times \GL(2)$,
  which will preserve the identity in the first frontal slice, while bringing the second frontal slice into real Jordan canonical form \cite{horn_matrix_2017}.
  Depending on the structure of $\mbf{T}_2$ (repeated eigenvalues and diagonalizable, repeated eigenvalues and non-diagonalizable, distinct real eigenvalues, or complex conjugate eigenvalues)
  the tensor will be one of the four forms
  \begin{align*}
    \begin{tensoreight}[3ex]
      1 & 0 & \lambda &       0 \\
      0 & 1 &       0 & \lambda
    \end{tensoreight} , \;
    \begin{tensoreight}[3ex]
      1 & 0 & \lambda &       1 \\
      0 & 1 &       0 & \lambda
    \end{tensoreight} , \;
    \begin{tensoreight}[3ex]
      1 & 0 & \lambda &   0 \\
      0 & 1 &       0 & \mu
    \end{tensoreight} , \;
    \begin{tensoreight}[3ex]
      1 & 0 &  a & b \\
      0 & 1 & -b & a
    \end{tensoreight},
  \end{align*}
  which after slice reductions may respectively be brought into the following
  \begin{align*}
    \begin{tensoreight}[3ex]
      1 & 0 & 0 & 0 \\
      0 & 1 & 0 & 0
    \end{tensoreight} , \;
    \begin{tensoreight}[3ex]
      1 & 0 & 0 & 1 \\
      0 & 1 & 0 & 0
    \end{tensoreight} , \;
    \begin{tensoreight}[3ex]
      1 & 0 & 0 & 0 \\
      0 & 0 & 0 & 1
    \end{tensoreight} , \;
    \begin{tensoreight}[3ex]
      1 & 0 &  0 & 1 \\
      0 & 1 & -1 & 0
    \end{tensoreight} .
  \end{align*}
  These are the canonical forms of $D_2$, $D_3$ (after permutations in modes 2 and 3), $G_2$, and $G_3$, respectively.

  Accounting for redundancy between the three cases, there are 8 unique canonical forms, as listed in \cref{tab:tensor-orbits}.  We finish by noting that no two of the canonical forms there are in the same orbit, because they have a different sign of  hyperdeterminant or a different  multilinear rank, both of which are  constant over orbits.
\end{proof}

\subsubsection{Conversion to canonical form}
The proof of \cref{thm:tensor-orbits} is constructive and thus gives a procedure to compute three invertible matrices which transform a tensor to its canonical form or vice versa.
That is, given any $\T \in \Real^{\ttt}$ there exists an efficient algorithm to determine the corresponding canonical form $\Tn{G}$ (from \cref{tab:tensor-orbits}) together with matrices $\A, \B, \C \in \GL(2)$ such that
\begin{displaymath}
  \Tn{G}  = \T \times_1 \A \times_2 \B \times_3 \C.
\end{displaymath}
The algorithm proceeds essentially as the proof, but records and accumulates the sequence of matrices used in the multi-TTMs.
For example, suppose that $\T$ has $\rank(\mbf{T}_1) = 1$ and $\T \in D_3$,
i.e., we are in the second case of the proof with $d = 0$.
Then we require in total: an initial transformation of $\Mx{T}{1}$ to $\begin{bsmallmatrix}
    1  & 0 \\ 0 & 0
\end{bsmallmatrix}$ via one mode-1 and mode-2 elimination matrices ($\A_1, \B_2$), a mode-3 elimination matrix ($\C_3$), and two diagonal matrices in modes 1 and 2 ($\A_4, \B_5$) to perform the final scaling.
That is we find matrices such that
\begin{displaymath}
  \G = \T \times_1 (\A_4 \A_1) \times_2 (\B_5 \B_2) \times_3 \C_3.
\end{displaymath}

\begin{exercise}
    For each item below, find matrices $\U,\V,\W \in \GL(2)$ producing the desired result:
    \begin{enumerate}[label={(\roman*)}]
    \item $
    \begin{tensoreight}[2ex]
      0 & 0 & 1 & 0 \\
      0 & 0 & 0 & 0
    \end{tensoreight} \times_1 \Mx{U} \times_2 \Mx{V} \times_3 \Mx{W}
    =
    \begin{tensoreight}[2ex]
      1 & 0 & 0 & 0 \\
      0 & 0 & 0 & 0
    \end{tensoreight}. $
    \item $
    \begin{tensoreight}[2ex]
      1 & 0 & 0 & 1 \\
      0 & 1 & 0 & 0
    \end{tensoreight}
    \times_1 \Mx{U} \times_2 \Mx{V} \times_3 \Mx{W}
    =
    \begin{tensoreight}[2ex]
      1 & 0 & 0 & 1 \\
      0 & 0 & 1 & 0
    \end{tensoreight}. $
    \item $
    \begin{tensoreight}[2ex]
      0 & 1 & 0 & 0 \\
      0 & 0 & 1 & 0
    \end{tensoreight}
    \times_1 \Mx{U} \times_2 \Mx{V} \times_3 \Mx{W}
    =
    \begin{tensoreight}[2ex]
      1 & 0 & 0 & 0 \\
      0 & 0 & 0 & 1
    \end{tensoreight}$.
  \end{enumerate}
  For item (iii), combine $\U, \V, \W$ with the three matrices from \cref{exa:reduction} to transform
    $
    \begin{tensoreight}[2ex]
      4 & 2 & 1 & 1 \\
      4 & 3 & 1 & 1
    \end{tensoreight}$
    to its canonical form.
\end{exercise}

\begin{exercise}
  The matrices $\A, \B, \C$ transforming between an arbitrary tensor and its canonical form are not unique: if $\A, \B, \C$ perform the correct transformation then so do matrices $\alpha \A, \beta \B, \frac{1}{\alpha\beta} \C$.
  For each of the eight orbits, look for additional ambiguities which result in non-uniqueness of the matrices transforming from tensor to canonical form.
\end{exercise}

\begin{example}[Fast algorithm for multiplying complex numbers]
  Consider the product of two complex numbers.
  If $u = a + i b$ and $v = c + i d$ with $a,b,c,d \in \mathbb{R}$ and $i = \sqrt{-1}$, then $z = uv = (ac - bd) + i (ad + bc)$.
  Since this operation is linear in each argument ($u$ and $v$), it may be represented by a bilinear operator
  $\Tn{C} =
   \begin{tensoreight}[2ex]
     1 &  0 & 0 & 1 \\
     0 & -1 & 1 & 0
   \end{tensoreight}$,
  which operates as
  $\mbf{z} = \Tn{C} \times_1 \mbf{u} \times_2 \mbf{v}$.
  Here $\mbf{u}, \mbf{v}, \mbf{z} \in \Real^2$ are representations of $u, v, z$ as real vectors obtained by separating real and imaginary parts
  (e.g., $\mbf{u} = \begin{bsmallmatrix} a \\ b \end{bsmallmatrix}$).

  The \emph{bilinear complexity} of a bilinear operator (informally) measures the required number of multiplications between two non-constants.  It is an important quantity in arithmetic complexity theory.
  Since the bilinear complexity is equivalent to the tensor rank of the corresponding tensor \cite{strassen_vermeidung_1973}, the problem of finding the fastest algorithm to apply a bilinear operator can be solved by finding a decomposition of minimal rank: it's clear if
  $\Tn{C} = \sum_i^r \mbf{a}^{(i)} \circ \mbf{b}^{(i)} \circ \mbf{c}^{(i)}$
  then the contraction with the factorized form
  $\Tn{C} \times_1 \mbf{u} \times_2 \mbf{v} = \sum_i (\mbf{a}^{(i)^{\Tr}} \mbf{u}) (\mbf{b}^{(i)^{\Tr}} \mbf{v}) \mbf{c}^{(i)}$
  gives an algorithm to compute the output of the operator with only $r$ non-constant multiplications.
  Recalling $\mbf{a}^{(i)}, \mbf{b}^{(i)}, \mbf{c}^{(i)}$ are constant, the cost comes from the multiplication of the inner products (one per summand).
  Examining the given $\Tn{C}$ for multiplying complex numbers, it is permutation equivalent to the canonical form of $G_3$, hence it has rank 3 and there exists an algorithm with bilinear complexity 3.
  This is surprising, since the naive algorithm for multiplying complex numbers requires 4 such multiplications.

  This fast bilinear algorithm can be directly applied to accelerate the product of complex matrices  (where multiplication is asymptotically more costly than addition) rather than just the multiplication of complex scalars \cite{higham_stability_1992}.
  The same framework may applied to the bilinear operator corresponding to real matrix multiplication; this can lead one to the famed ``Strassen algorithm'' for multiplying $2 \times 2$ matrices (and for $n \times n$ via recursion) \cite{strassen_gaussian_1969,kolda_tensor_2025}.
  Additional treatment of tensors and arithmetic complexity can be found in \cite{bini_role_2007,kolda_tensor_2025}.
\end{example}

\section{Geometric structure}
\label{sec:alg-geo-struct}
In previous sections, we saw how
product groups of $\GL(2)$ reveal structure on 
$\Real^{2 \times 2}$ and $\Real^{\ttt}$.
For the matrix case, there are three orbits (one per rank);
and for tensors, there are eight orbits (with more than one for certain ranks).
Digging deeper, it is natural to ask the following:
\begin{itemize}
  \item What is the dimension of each orbit?
  \item How do the orbits tesselate to fill the ambient space?
  \item What is the topology (open, closed, or neither) of each orbit?
\end{itemize}
\noindent In this section, we compute the dimensions of the matrix and tensor orbits.
The next section is dedicated to answering the second and third questions.

\subsection{Orbit dimension}
The following result ties the dimension of the image of polynomial maps $f$ to the rank of their Jacobian matrix $Df$.
Recall that the Jacobian matrix has entries equal to the first-order partial derivatives of $f$.  It defines a linear map from the tangent space of the domain to that of the codomain.

\begin{proposition}[consequence of  {\cite[Theorem~5.57]{basu_algorithms_2006}}]
  \label{prp:jacobian-dimension}
  Consider a polynomial map $f: X \rightarrow \mathbb{R}^n$, where $X \subset \Real^m$ is nonempty and Zariski open (that is, $X$ can be written as
  $\Real^m \setminus K$
  where $K \neq \Real^m$ is an algebraic set).
  Then, regarding $f(X)$ as a semi-algebraic set, 
  for a generic point $\mbf{x} \in X$ we have $\dim(f(X)) = \rank\left(Df(\mbf{x})\right)$.
\end{proposition}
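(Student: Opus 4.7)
The plan is to bound $\dim f(X)$ above and below by the generic rank of the Jacobian. First define $r_\star := \max_{\mbf{x} \in X} \rank(Df(\mbf{x}))$. The condition $\rank(Df(\mbf{x})) \geq r_\star$ is equivalent to the non-vanishing of some $r_\star \times r_\star$ minor of $Df(\mbf{x})$, and each such minor is a polynomial in $\mbf{x}$. Hence $X_\star := \{\mbf{x} \in X : \rank(Df(\mbf{x})) = r_\star\}$ is Zariski open in $X$, and because $X$ itself is a nonempty Zariski open subset of $\Real^m$, the complement $X \setminus X_\star$ is cut out by a nontrivial polynomial condition and is therefore measure zero. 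So $\rank(Df(\mbf{x})) = r_\star$ for generic $\mbf{x} \in X$, and it remains to establish that $\dim f(X) = r_\star$.

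For the lower bound $\dim f(X) \geq r_\star$, I would pick any point $\mbf{x}_0 \in X_\star$. Since $X_\star$ is open in the Euclidean topology, $Df$ has constant rank $r_\star$ on a Euclidean neighborhood of $\mbf{x}_0$. The constant rank theorem (a direct consequence of the implicit function theorem) then yields smooth local coordinate changes in both source and target in which $f$ becomes the projection $(y_1,\ldots,y_m) \mapsto (y_1,\ldots,y_{r_\star},0,\ldots,0)$. In particular, $f(X)$ contains a smooth submanifold patch of dimension $r_\star$, and hence its semi-algebraic dimension is at least $r_\star$.

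For the upper bound $\dim f(X) \leq r_\star$, stratify by Jacobian rank as $X = \bigsqcup_{r=0}^{r_\star} X_r$, where $X_r = \{\mbf{x} \in X : \rank(Df(\mbf{x})) = r\}$. Each $X_r$ is semi-algebraic, being defined by vanishing and non-vanishing of suitable minors. By semi-algebraic cell decomposition, $X_r$ can be partitioned into finitely many smooth semi-algebraic cells on which $f$ restricts to a map of constant rank $r$. Applying the constant rank theorem on each cell shows that the image of that cell lies in a smooth submanifold of dimension at most $r$, so $\dim f(X_r) \leq r$. Since $f(X) = \bigcup_r f(X_r)$ is a finite union, $\dim f(X) \leq \max_r r = r_\star$, which combined with the lower bound gives the result.

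The main obstacle is the upper bound: the lower-rank strata $X_r$ for $r < r_\star$ need not themselves be smooth manifolds, so one cannot apply the constant rank theorem directly to them without first invoking a semi-algebraic cell (or Whitney) decomposition---or equivalently a semi-algebraic version of Sard's theorem. This bookkeeping is precisely what the machinery behind Theorem~5.57 of Basu--Pollack--Roy supplies; once it is in hand, the remaining ingredients (upper semicontinuity of Jacobian rank on a Zariski open set and the local normal form) are standard.
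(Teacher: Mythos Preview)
Your sketch is sound, and in fact it supplies what the paper does not: the paper states this proposition as a direct consequence of \cite[Theorem~5.57]{basu_algorithms_2006} and gives no proof of its own. So there is no ``paper's approach'' to compare against beyond that citation; your outline is essentially the standard route one takes to unpack such a statement from the real-algebraic machinery.

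One small point worth tightening in your upper-bound argument: on a smooth cell $C \subset X_r$, the ambient Jacobian $Df(\mbf{x})$ has rank exactly $r$, but the differential of the \emph{restriction} $D(f|_C)(\mbf{x}) = Df(\mbf{x})|_{T_{\mbf{x}}C}$ need not have constant rank across $C$, so you cannot invoke the constant rank theorem on $f|_C$ directly. What you actually need is the weaker fact that a $C^1$ map whose differential has rank at most $r$ everywhere has image of (Hausdorff or semi-algebraic) dimension at most $r$; this follows from the classical Sard theorem together with the observation that a semi-algebraic set of Lebesgue measure zero in $\Real^{r+1}$ has dimension at most $r$, applied after composing with a generic linear projection $\Real^n \to \Real^{r+1}$. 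You already flag that the upper bound is where the real work lies and that the Basu--Pollack--Roy machinery handles it, so this is a refinement rather than a gap.
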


As we demonstrate shortly, \cref{prp:jacobian-dimension} enables a simple algorithm for computing the dimension of any set which can be specified as the image of a polynomial map.
First compute the Jacobian of the map, then evaluate it at a random input and compute its rank; with probability one this computed rank will be equal to the dimension of the image.

\subsubsection{Dimensions of matrix orbits}
\label{sec:matrix-orbit-dimension}

\begin{proposition}[Dimension of space of rank-$r$ matrices]
  \label{prp:matrix-orbit-dimension}
  The set of rank-$r$ matrices of size $m \times n$ has dimension $(m+n)r - r^2$.
\end{proposition}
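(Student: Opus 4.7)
The plan is to parameterize the set of rank-$r$ matrices as the image of a polynomial map and then apply \cref{prp:jacobian-dimension} to read off the dimension from the rank of the Jacobian at a generic point. Specifically, consider
\begin{displaymath}
\phi : \mathbb{R}^{m \times r} \times \mathbb{R}^{n \times r} \longrightarrow \mathbb{R}^{m \times n}, \qquad \phi(\A,\B) = \A\B^{\Tr},
\end{displaymath}
and restrict to the Zariski open set $X$ on which both $\A$ and $\B$ have full column rank $r$ (this is open because it is the non-vanishing locus of the product of two determinantal polynomials). The image $\phi(X)$ is exactly the set of rank-$r$ matrices, since any rank-$r$ factorization has full-column-rank factors and, conversely, $\A\B^{\Tr}$ has rank $r$ whenever both $\A$ and $\B$ do. By \cref{prp:jacobian-dimension}, it suffices to compute $\operatorname{rank} D\phi$ at a generic point.

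First I would compute the differential: $D\phi_{(\A,\B)}[\dot\A,\dot\B] = \dot\A \B^{\Tr} + \A\dot\B^{\Tr}$. By rank–nullity, the rank of this linear map on the $(m+n)r$-dimensional domain equals $(m+n)r - \dim \ker D\phi_{(\A,\B)}$, so the goal reduces to showing $\dim \ker D\phi_{(\A,\B)} = r^2$ at a generic point.

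The key structural observation is that $\phi$ has a built-in $\GL(r)$ ambiguity: the substitution $(\A,\B) \mapsto (\A\mathbf{Q}, \B\mathbf{Q}^{-\Tr})$ for $\mathbf{Q} \in \GL(r)$ leaves $\phi(\A,\B)$ unchanged. Differentiating this action at $\mathbf{Q} = \mathbf{I}_r$ in the direction of an arbitrary $\mathbf{Q} \in \mathbb{R}^{r \times r}$ yields the tangent vector $(\A\mathbf{Q}, -\B\mathbf{Q}^{\Tr})$, which one checks directly lies in $\ker D\phi_{(\A,\B)}$. Because $\A$ and $\B$ both have full column rank, the map $\mathbf{Q} \mapsto (\A\mathbf{Q}, -\B\mathbf{Q}^{\Tr})$ is injective, giving an $r^2$-dimensional subspace of the kernel.

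The main obstacle — and really the only non-routine step — is showing this accounts for the \emph{entire} kernel. My plan is: given $(\dot\A,\dot\B) \in \ker D\phi_{(\A,\B)}$, right-multiply the equation $\dot\A\B^{\Tr} + \A\dot\B^{\Tr} = 0$ by a right inverse of $\B^{\Tr}$ (which exists because $\B$ has full column rank) to solve $\dot\A = -\A(\dot\B^{\Tr}\Mx{C})$ and set $\mathbf{Q} := -\dot\B^{\Tr}\Mx{C}$; substituting back gives $\A(\mathbf{Q}\B^{\Tr} + \dot\B^{\Tr}) = 0$, and then left-multiplying by a left inverse of $\A$ forces $\dot\B = -\B\mathbf{Q}^{\Tr}$. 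Thus $\dim \ker D\phi_{(\A,\B)} = r^2$ exactly, so $\dim\phi(X) = (m+n)r - r^2$ by \cref{prp:jacobian-dimension}.
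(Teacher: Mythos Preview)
Your argument is correct, but it takes a genuinely different route from the paper's proof. The paper parameterizes rank-$r$ matrices via the orbit map $f(\U,\V) = \U \C_r \V^{\Tr}$ on $\GL(m) \times \GL(n)$ (domain of dimension $m^2+n^2$), shows the Jacobian rank is constant by factoring $Df$ through invertible maps, and then computes the rank at $(\mathbf{I}_m,\mathbf{I}_n)$ by identifying the \emph{range} as the block matrices with zero lower-right $(m-r)\times(n-r)$ block. You instead use the compact factorization map $\phi(\A,\B)=\A\B^{\Tr}$ on an $(m+n)r$-dimensional domain and compute the Jacobian rank via the \emph{kernel}, identifying the defect $r^2$ with the infinitesimal $\GL(r)$ gauge freedom $(\A,\B)\mapsto(\A\mathbf{Q},\B\mathbf{Q}^{-\Tr})$. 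Your approach gives a cleaner conceptual explanation of where the $r^2$ correction comes from and uses fewer parameters; the paper's orbit-map approach, on the other hand, is chosen because it transfers verbatim to the tensor setting in \cref{thm:tensor-orbit-dimension}, where the relevant sets are orbits rather than images of a low-rank factorization. One minor remark: your phrase ``product of two determinantal polynomials'' is fine if read as $\det(\A^{\Tr}\A)\det(\B^{\Tr}\B)$, whose non-vanishing is exactly the full-column-rank condition.
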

\begin{proof}
  We apply \cref{prp:jacobian-dimension} to the polynomial map
  $f(\U,\V) = \U \C_r \V^{\Tr}$, which maps $\GL(m) \times \GL(n)$ onto the set of rank-$r$ matrices inside $\mathbb{R}^{m \times n}$.
 Using the product rule, the Jacobian at $(\U, \V)$ as a linear operator  from
  $\Real^{m \times m} \times \Real^{n \times n}$ to  $\Real^{m \times n}$ \nolinebreak is %
  \begin{displaymath}
    Df(\U, \V)(\dot{\U},\dot{\V}) = \dot{\U} \C_r \V^{\Tr} + \U \C_r \dot{\V}^{\Tr}\!\!,
  \end{displaymath}
  where $(\dot{\U}, \dot{\V})$ is an arbitrary element of the tangent space
  at $(\U,\V)$.
See Sections~3.4 and 4.7 in \cite{boumal2023introduction} for an intrinsic definition of Jacobians and for related computations.

  Now we show  that the rank of the Jacobian $Df(\U,\V)$ is independent of the input point $(\U,\V)$.
  Note the Jacobian may be represented as the composition of three linear maps: 
  $Df(\U,\V) = \phi \circ \gamma \circ \theta$
  where
  \begin{equation*}
    \theta(\A,\B) = (\U^{-1}\A,\B^{\Tr}\V^{-\Tr}), \quad
    \gamma(\A,\B) = \A \C_r + \C_r \B, \quad \text{and} \quad
    \phi(\mbf{M}) = \U \mbf{M} \V^{\Tr}.
  \end{equation*}
  Notice $\phi$ and $\theta$ are invertible, since $\U$ and $\V$ are invertible.  So the rank of the composite (and so the Jacobian) is equal to the rank of $\gamma$, which is independent of $\U$ and $\V$.
  Therefore, it suffices to employ \cref{prp:jacobian-dimension} at any point $(\U, \V)$.

  For simplicity, choose $(\U, \V)= (\mbf{I}_m, \mbf{I}_n)$.  
  Then
  $Df(\mbf{I}_m, \mbf{I}_n)(\dot{\U}, \dot{\V}) = \gamma(\dot{\U},\dot{\V}) = \dot{\U} \C_k + \C_k \dot{\V}^{\Tr}$.
  Writing $\dot{\U} \in \Real^{m \times m}$ and $\dot{\V} \in \Real^{n \times n}$ as block matrices
  \begin{displaymath}
    \dot{\U} =
    \begin{bmatrix} \dot{\U}_{11} & \dot{\U}_{12} \\ \dot{\U}_{21} & \dot{\U}_{22} \end{bmatrix},
    \quad
    \dot{\V} =
    \begin{bmatrix} \dot{\V}_{11} & \dot{\V}_{12} \\ \dot{\V}_{21} & \dot{\V}_{22} \end{bmatrix}
  \end{displaymath}
  with upper-left blocks of size $r \times r$, we have
  \begin{displaymath}
    Df(\mbf{I}_m, \mbf{I}_n)(\dot{\U}, \dot{\V}) =
    \begin{bmatrix} \dot{\U}_{11} & \mbf{0} \\ \dot{\U}_{21} & \mbf{0} \end{bmatrix} +
    \begin{bmatrix} \dot{\V}_{11} & \dot{\V}_{21} \\ \mbf{0} & \mbf{0} \end{bmatrix} .
  \end{displaymath}
  The range of $Df(\mbf{I}_m, \mbf{I}_n)$ consists of all matrices with a zero lower right block.
  Since this block has size $(m-r) \times (n-r)$
   and the other elements can be arbitrary nonzero numbers,
  the rank of the Jacobian is
  $mn - (m-r)(n-r) = (m+n)r - r^2$.
\end{proof}

\subsubsection{Dimension of tensor orbits}
\label{sec:tensor-orbit-dimension}
Next we turn  our attention to the dimensions of the 8 orbits of $\Real^{\ttt}$ under $\GL(2) \times \GL(2) \times \GL(2)$.
This result was first noted without proof by Kruskal in an unpublished note \cite{kruskal_rank_1983}. %
We proceed in a similar fashion to the proof of \cref{prp:matrix-orbit-dimension}: apply \cref{prp:jacobian-dimension} to a polynomial map whose Jacobian we show has the same rank at every input point, enabling us to
choose a convenient input (identity matrices) to simplify the analysis.  

\begin{theorem}
  \label{thm:tensor-orbit-dimension}
  The dimensions of the 8 orbits of $\mbb{R}^{\ttt}$ under $\GL(2) \times \GL(2) \times \GL(2)$ are as given in \cref{tab:tensor-orbits}.
\end{theorem}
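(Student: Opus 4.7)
The plan is to mimic the proof of \cref{prp:matrix-orbit-dimension}, applying \cref{prp:jacobian-dimension} once per orbit. For each canonical form $\Tn{G}$ in \cref{tab:tensor-orbits}, consider the polynomial map
\begin{displaymath}
f_{\Tn{G}}:\GL(2) \times \GL(2) \times \GL(2) \to \Real^{\ttt}, \qquad f_{\Tn{G}}(\U,\V,\W) = \Tn{G} \times_1 \U \times_2 \V \times_3 \W,
\end{displaymath}
whose image is precisely the orbit of $\Tn{G}$. By \cref{prp:jacobian-dimension}, the orbit dimension equals the generic rank of $Df_{\Tn{G}}$. So the task reduces to computing eight Jacobian ranks.

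Next I would show that this rank does not depend on the input point, mirroring the reduction in the matrix case. Using the product rule together with \cref{eqn:mode-k-double,eqn:associative}, a short calculation gives
\begin{displaymath}
Df_{\Tn{G}}(\U,\V,\W)(\dot{\U},\dot{\V},\dot{\W}) = \T \times_1 (\dot{\U}\U^{-1}) + \T \times_2 (\dot{\V}\V^{-1}) + \T \times_3 (\dot{\W}\W^{-1}),
\end{displaymath}
where $\T = f_{\Tn{G}}(\U,\V,\W)$. Writing $\T$ as the multi-TTM of $\Tn{G}$ by $(\U,\V,\W)$ and factoring the invertible operator $\cdot \times_1 \U \times_2 \V \times_3 \W$ out of each summand, one sees that the image of $Df_{\Tn{G}}(\U,\V,\W)$ equals the image of the linear map
\begin{displaymath}
L_{\Tn{G}}(\A,\B,\C) = \Tn{G} \times_1 \A + \Tn{G} \times_2 \B + \Tn{G} \times_3 \C
\end{displaymath}
composed with an invertible linear map on $\Real^{\ttt}$. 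Since $\dot{\U}\U^{-1}$, $\dot{\V}\V^{-1}$, $\dot{\W}\W^{-1}$ range over all of $\Real^{2\times 2}$ as the tangent vectors vary, the orbit dimension is exactly $\rank(L_{\Tn{G}})$, which can be evaluated once using only the fixed canonical form $\Tn{G}$.

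Finally I would carry out eight case computations by reading off the entries of $L_{\Tn{G}}(\A,\B,\C)$ as linear combinations of the twelve scalar parameters in $(\A,\B,\C)$ and counting independent outputs. For $D_0$ the map is identically zero. For $D_1$, only the four positions $(1,1,1)$, $(2,1,1)$, $(1,2,1)$, $(1,1,2)$ are reachable and they are linearly independent, giving dimension 4. For each of $D_2$, $D_2^\prime$, $D_2^{\prime\prime}$, one finds that $L_{\Tn{G}}$ hits exactly a 5-dimensional subspace of $\Real^{\ttt}$ (the three cases being symmetric by mode permutation). For $D_3$, the image of $L_{\Tn{G}}$ satisfies a single nontrivial linear constraint, giving dimension 7. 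For the typical orbits $G_2$ and $G_3$, I would verify that $L_{\Tn{G}}$ is surjective onto $\Real^{\ttt}$, for instance by exhibiting an invertible $8 \times 8$ submatrix of its $8 \times 12$ coefficient matrix, producing dimension 8.

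The main obstacle is the bookkeeping in the $D_3$, $G_2$, and $G_3$ cases, where $\Tn{G}$ has several nonzero entries whose contributions through the three modes overlap at common tensor positions. In those cases it is cleanest to write out the full $8 \times 12$ coefficient matrix of $L_{\Tn{G}}$ and row-reduce; the delicate point is confirming that no accidental linear dependence cuts the rank below the claimed value. Everything else in the argument is a direct transcription of the matrix-rank computation to the tensor setting.
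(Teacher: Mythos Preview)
Your proposal is correct and follows essentially the same approach as the paper: parameterize each orbit via the multi-TTM map on $\GL(2)^3$, factor the Jacobian through invertible maps to reduce to the linear operator $L_{\Tn{G}}(\A,\B,\C)=\Tn{G}\times_1\A+\Tn{G}\times_2\B+\Tn{G}\times_3\C$ (the paper's $\gamma$), and then compute the eight ranks case by case. Your substitution $(\dot{\U},\dot{\V},\dot{\W})\mapsto(\dot{\U}\U^{-1},\dot{\V}\V^{-1},\dot{\W}\W^{-1})$ together with pulling out the invertible multi-TTM is exactly the paper's $\phi\circ\gamma\circ\theta$ decomposition, and your casework matches the paper's explicit $8\times12$ Jacobian tables in the appendix.
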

\begin{proof}
  Let $\Tn{G}$ be one of the canonical forms from \cref{tab:tensor-orbits}
  and consider the map
  $f : \GL(2) \times \GL(2) \times \GL(2) \rightarrow \Real^{\ttt}$, defined as
  \begin{displaymath}
    f(\U, \V, \W) = \Tn{G} \times_1 \U \times_2 \V \times_3 \W.
  \end{displaymath}
  In particular, $f$ maps from $\Real^{12}$ to $\Real^8$ if we vectorize
  the inputs and outputs.
  An expression for the Jacobian is
  \begin{displaymath}
    Df(\U,\V,\W)(\dot{\U}, \dot{\V}, \dot{\W}) =
    \Tn{G} \times_1 \dot{\U} \times_2      \V  \times_3      \W +
    \Tn{G} \times_1      \U  \times_2 \dot{\V} \times_3      \W +
    \Tn{G} \times_1      \U  \times_2      \V  \times_3 \dot{\W}.
  \end{displaymath}
  This expression  gives $Df(\U,\V,\W)$ as a linear operator from $\Real^{12}$ to $\Real^8$.

  We next show that the rank of the Jacobian is independent of $\U, \V, \W$.
  We express it as the composition of three maps:
  $Df(\U, \V, \W) 
   = \phi \circ \gamma \circ \theta$
  where
  \begin{align*}
    \theta(\A, \B, \C) &= (\U^{-1}\A,\V^{-1}\B,\W^{-1}\C), \\
    \gamma(\A,\B,\C) 
    &=     \Tn{G} \times_1 \A        \times_2 \mbf{I}_2 \times_3 \mbf{I}_2 +
          \Tn{G} \times_1 \mbf{I}_2 \times_2 \B        \times_3 \mbf{I}_2 
          \Tn{G} \times_1 \mbf{I}_2 \times_2 \mbf{I}_2 \times_3 \C, 
          \quad\text{and}\\
    \phi(\T) &= \T \times_1 \U \times_2 \V \times_3 \W .
  \end{align*}
  Notice $\phi$ and $\theta$ are invertible, since $\U, \V, \W$ are invertible. 
  So the rank of the composition (and so the Jacobian) is equal to the rank of $\gamma$, which is independent of $\U, \V, \W$.
  Hence, for simplicity we may choose the point $(\U,\V,\W)=(\mbf{I}_2, \mbf{I}_2, \mbf{I}_2)$ for applying \cref{prp:jacobian-dimension}, because then
$Df(\mbf{I}_2,\mbf{I}_2,\mbf{I}_2) = \gamma$.

  It remains only to calculate the rank of this linear map for each orbit.
  To recover the matrix representation of the linear map from its action as an operator, one can plug in a basis for its domain.
  Since the input is 3 matrices of size $2 \times 2$, 
  it has dimension 12.
  If one chooses the basis to be the elementary vectors,
  we get one output per basis vector.
  For example, consider the orbit $D_1$ with canonical form
  \begin{displaymath}
    \G = 
    \begin{tensoreight}
      1 & 0 & 0 & 0 \\
      0 & 0 & 0 & 0
    \end{tensoreight}.
  \end{displaymath}
  Evaluating the first ``unit vector'' in $\Real^{12}$ is equivalent to
  \begin{displaymath}
    Df(\mbf{I}_2,\mbf{I}_2,\mbf{I}_2)\left(
    \begin{bmatrix}
      1 & 0 \\ 0 & 0
    \end{bmatrix},
    \begin{bmatrix}
      0 & 0 \\ 0 & 0
    \end{bmatrix},
    \begin{bmatrix}
      0 & 0 \\ 0 & 0
    \end{bmatrix}
    \right) =
    \begin{tensoreight}
      1 & 0 & 0 & 0 \\
      0 & 0 & 0 & 0
    \end{tensoreight}.
  \end{displaymath}
    Vectorizing this yields the first column of the Jacobian.
  Continuing in this fashion with the other 11 basis vectors, we can recover the full Jacobian for orbit $D_1$ as
  \begin{align*}
    \begin{bmatrix}
      1 & 0 & 0 & 0 & 1 & 0 & 0 & 0 & 1 & 0 & 0 & 0 \\
      0 & 0 & 0 & 0 & 0 & 0 & 0 & 0 & 0 & 0 & 1 & 0 \\
      0 & 0 & 0 & 0 & 0 & 0 & 1 & 0 & 0 & 0 & 0 & 0 \\
      0 & 0 & 0 & 0 & 0 & 0 & 0 & 0 & 0 & 0 & 0 & 0 \\
      0 & 0 & 1 & 0 & 0 & 0 & 0 & 0 & 0 & 0 & 0 & 0 \\
      0 & 0 & 0 & 0 & 0 & 0 & 0 & 0 & 0 & 0 & 0 & 0 \\
      0 & 0 & 0 & 0 & 0 & 0 & 0 & 0 & 0 & 0 & 0 & 0 \\
      0 & 0 & 0 & 0 & 0 & 0 & 0 & 0 & 0 & 0 & 0 & 0
    \end{bmatrix}\!,
  \end{align*}
  which is rank 5 by inspection.
  Hence $\dim(D_1) = 5$ as claimed, with the dimensions of the other orbits following by an analogous computation involving the respective canonical form; see \cref{sec:tensor-orbit-details} for the other 7 Jacobians as well as an alternative proof.
\end{proof}

\Cref{thm:tensor-orbit-dimension} justifies the dimension column
of \cref{tab:tensor-orbits}.
We now have a better understanding of the topology of different ranks: the set of rank-2 tensors has a full-dimensional component ($G_2$) and a low-dimensional component ($D_2 \cup D_2^\prime \cup D_2^{\prime\prime}$); likewise, the full and low-dimensional components of set of rank-3 tensors are $G_3$ and $D_3$.
The full-dimensional sets are why there are multiple typical ranks, and the low-dimensional rank-2 and rank-3 sets have an important role to play in the discussion of ill-posedness in the next section.

Before we move on, we consider the relative sizes of $G_2$ and $G_3$.
Both have dimension 8, but is there any difference between them?
It turns out that random matrix theory enables the calculation of the exact proportion of space occupied by these two orbits.
Analytical results by Bergqvist \cite{bergqvist_exact_2013} show that
the probability that a random tensor with i.i.d.\@ standard normal entries being
in $G_2$ is $\pi/4$ and the probability of being in $G_3$ is $1-\pi/4$.
This corresponds to the last column of \cref{tab:tensor-orbits}.
The proof shows an equivalence between the rank of a $2 \times 2 \times 2$ tensor and the number of real generalized eigenvalues, after which a result from random matrix theory about the expected number of such eigenvalues is employed.
The following exercise explores the probabilities of rank-2 and rank-3 tensors.

\begin{exercise}[Probabilities of rank-2 and rank-3]
  By examining the sign of the hyperdeterminant of a tensor and its multilinear rank, \cref{tab:tensor-orbits} gives an algorithm for determining the rank of a tensor.
  Generate 1000 random $\ttt$ tensors with standard Gaussian i.i.d.\@ entries
  and determine the rank for each. 
  \begin{enumerate}[label={(\roman*)}]
  \item What is the empirical probability of a tensor being rank-2? What about rank-3?
    How do these compare to the probabilities listed in \cref{tab:tensor-orbits}?
  \item Did you encounter any tensors from degenerate (low dimensional) orbits?
  \end{enumerate}
\end{exercise}

\section{Tesselation and topology}
\label{sec:tesselation-topology}
To answer the questions regarding tesselation and topology it will be helpful to use the concept of a partial ordering.
A \emph{partial ordering}  is a binary relation $\preceq$ satisfying the following properties:
reflexivity ($A \preceq A$),
transitivity ($A \preceq B$ and $B \preceq C \Rightarrow A \preceq C$), and
antisymmetry ($A \preceq B$ and $B \preceq A \Rightarrow A = B$).
A partially ordered set is simply a set with a partial ordering.
Since the set is only partially ordered, not every pair of elements need be comparable via $\preceq$; for $A, B \in S$ it is permitted that neither $A \preceq B$ nor $B \preceq A$ is true.

\begin{example}\label{exa:partial-order}
  The set
  $\set{ \set{x,y,z}, \set{x,y}, \set{y,z}, \set{x}, \set{y}, \set{z}, \set{} }$
  can be partially ordered by inclusion, i.e., $A \preceq B$ if $A \subseteq B$.
  Note that $\set{x,y}$ and $\set{y,z}$ are not comparable under this relation.
\end{example}

A \emph{Hasse diagram} is a visual representation of a partially ordered set as a directed graph \cite{birkhoff_lattice_1967}.
If $A \preceq B$, then the Hasse diagram has an arrow from $A$ to $B$ ($A \rightarrow B$),
excluding edges implied by transitivity.
While set inclusion can induce a partial ordering as in \cref{exa:partial-order}, this relation is not well suited for orbits, which are always disjoint.
Instead, one of the most natural partial orderings for orbits is the inclusion of closures; that is
$A \preceq B$ if $\overline{A} \subseteq \overline{B}$.
The following example uses this relation to partially order certain sets in $\Real^2$.

\begin{example}
  Consider a partitioning of $[0,2] \times [0,2] \subset \Real^2$ into five sets (defined below), partially ordered by inclusion of closures.
  The corresponding Hasse diagram is shown in \cref{fig:hasse-example}.
\end{example}
\begin{figure}[!ht]
  \centering
  \includegraphics{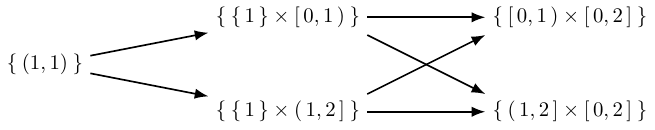}
  \caption{Hasse diagram for a partitioning of
    $[0,2] \times [0,2] \subset \Real^2$
    partially ordered by inclusion of closures.}
  \label{fig:hasse-example}
\end{figure}

\subsection{Matrix Hasse diagram}

Consider partially ordering (by inclusion of closures) the orbits of $\GL(m) \times \GL(n)$ in $\Real^{m \times n}$.
As discussed in \cref{sec:orbit-matrix}, there are $n + 1$ total orbits (assuming $m \geq n$).
The Hasse diagram then has $n+1$ elements as shown in \cref{fig:hasse-matrix}.
We use $R_r$ to denote the orbit
$\left( \GL(m) \times \GL(n) \right) \cdot \C_r$, which consists of all rank-$r$ matrices.
\begin{figure}[!ht]
  \centering
  \includegraphics{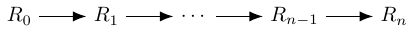}
  \caption{Hasse diagram of orbits in $\Real^{m \times n}$ under $\GL(m) \times \GL(n)$, ordered by inclusion of closures.}
  \label{fig:hasse-matrix}
\end{figure}

To prove the correctness of each edge in the diagram it suffices to verify that
rank-$(r-1)$ matrices are in the closure of rank-$r$ matrices.
Consider the sequence
\begin{displaymath}
  \label{eqn:matrix-rank-drop}
  \C_{r-1} + \frac{1}{j} \C_r
  \in \Real^{m \times n}
  \quad \text{for} \quad j \in \mathbb{N}.
\end{displaymath}
Every matrix in the sequence has rank $r$, but the limit of the sequence is
$\C_{r-1}$ of rank $r-1$.

We conclude the example by noting a few simple facts made apparent by the Hasse diagram.
Only rank-$0$ matrices form a closed set; orbits of higher rank are missing the limit points corresponding to all lower matrix ranks.
While matrices of \emph{fixed} rank do not form a closed set, matrices of \emph{bounded} rank do. In other words, for $r > 0$, the set
$\set{\A \in \Real^{m \times n} | \rank(\A) = r}$ is not closed, but the set
$\set{\A \in \Real^{m \times n} | \rank(\A) \leq r}$ is.

\subsection{Tensor Hasse diagram}
As in the matrix case above, the tensor orbits from \cref{tab:tensor-orbits} may be put into a partial order via the inclusion of closures to provide geometrical information and intuition about these sets.
The resulting Hasse diagram for the tensor orbits is shown in \cref{fig:hasse-tensor}.
Like the proof of the tensor orbit dimensions, this seems to have been first done by Kruskal without written proof in an unpublished note \cite{kruskal_rank_1983},
so we provide a proof in \cref{thm:hasse-tensor}.

\begin{figure}[!ht]
  \centering
  \includegraphics{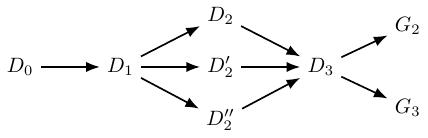}
  \caption{
    Hasse diagram of the orbits of $\Real^{\ttt}$ under $\GL(2) \times \GL(2) \times \GL(2)$, ordered by inclusion of closures.
  }
  \label{fig:hasse-tensor}
\end{figure}

\begin{theorem}
  \label{thm:hasse-tensor}
  The Hasse diagram given in \cref{fig:hasse-tensor} describes the partial ordering of the orbits of $\ttt$ tensors under $\GL(2) \times \GL(2) \times \GL(2)$.
\end{theorem}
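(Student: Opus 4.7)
The plan is to verify (i) every arrow drawn in \cref{fig:hasse-tensor} corresponds to a genuine closure inclusion, and (ii) no other closure inclusion holds between orbits. The first half works at the level of entire orbits: once we produce a single point of orbit $A$ inside $\overline{B}$, the invariance of closure under the continuous action of $\GL(2)\times\GL(2)\times\GL(2)$ gives the full inclusion $A\subseteq\overline{B}$. The second half requires continuous orbit invariants; dimension, sign of the hyperdeterminant, and the three unfolding ranks will prove sufficient.

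For each edge in the diagram, I would exhibit an explicit one-parameter family $\T_\epsilon$ lying in the upper orbit (verified via $\Delta$ and the three unfolding ranks) that converges to the canonical form of the lower orbit as $\epsilon\to 0$. For the edges $D_0\preceq D_1\preceq\{D_2,D_2',D_2''\}$, simple scalings work, e.g., mode-$k$ TTM of the canonical form of the upper orbit by $\diag(1,\epsilon)$ with the mode index chosen to collapse the correct unfolding rank in the limit. For $D_2,D_2',D_2''\preceq D_3$, a small nilpotent perturbation of the second frontal slice of the $D_2$ canonical form lifts it into $D_3$ (the condition $\Delta=0$ persists because the perturbation has a repeated eigenvalue relative to the identity first slice), and shrinking the perturbation retrieves the original $D_2$ canonical form. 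The essential edges are $D_3\preceq G_2$ and $D_3\preceq G_3$, which I would handle simultaneously via the single family
\begin{equation*}
  \T_\epsilon =
  \begin{tensoreight}
    1 & 0 & 0 & 1 \\
    0 & \epsilon & 1 & 0
  \end{tensoreight},
\end{equation*}
for which \cref{eqn:hdet} gives $\Delta(\T_\epsilon) = 4\epsilon$. Then $\T_\epsilon\in G_2$ for $\epsilon>0$ and $\T_\epsilon\in G_3$ for $\epsilon<0$, while $\T_0$ is the canonical form of $D_3$, so both border-rank inclusions drop out at once.

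For the non-edges I would deploy three semi-continuity arguments. First, $A\preceq B$ forces $\dim(A)\le\dim(B)$, which by \cref{tab:tensor-orbits} rules out every relation pointing from a larger- to a strictly smaller-dimensional orbit. Second, continuity of $\Delta$ prevents a change of sign in a limit without passing through zero, so the two open orbits $G_2$ and $G_3$ (on which $\Delta$ has opposite signs) are incomparable in $\preceq$. Third, the lower semi-continuity of matrix rank applied to each unfolding shows that the multilinear rank of any limit tensor is coordinatewise no larger than that of the sequence; since the triples $(2,2,1),(1,2,2),(2,1,2)$ attached to $D_2,D_2',D_2''$ are pairwise incomparable coordinatewise, these three orbits are pairwise incomparable. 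Together these arguments exclude every pair not connected in \cref{fig:hasse-tensor}.

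The main obstacle is identifying the family realizing $D_3\preceq G_2$: a sequence of rank-$2$ tensors converging to a rank-$3$ tensor is the signature tensor phenomenon that distinguishes the tensor Hasse diagram from the matrix one in \cref{fig:hasse-matrix}, and it is the key ingredient of the ill-posedness results in \cref{sec:ill-posedness}. Once this one family is in hand, the rest of the theorem reduces to routine verifications of hyperdeterminants, multilinear ranks, and dimensions already tabulated in \cref{tab:tensor-orbits}.
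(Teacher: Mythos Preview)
Your approach to the edges is essentially the paper's: exhibit an explicit one-parameter family in the larger orbit converging to the canonical form of the smaller one. The paper phrases this as a sequence $(\U^{(j)},\V^{(j)},\W^{(j)})$ in $\GL(2)^3$ acting on the upper canonical form and limiting to the lower one (it writes out only the $\overline{D_3}\supset\overline{D_2}$ case and declares the others ``nearly identical''), whereas you write down the perturbed tensor directly and verify the orbit via $\Delta$ and multilinear rank; these are equivalent formulations, and your family $\T_\epsilon$ with $\Delta(\T_\epsilon)=4\epsilon$ dispatching both $D_3\preceq G_2$ and $D_3\preceq G_3$ at once is a nice touch.

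Where you genuinely go beyond the paper is part (ii): the paper's proof does not address the non-edges at all, leaving the reader to infer them from the tabulated invariants. Your three semi-continuity arguments (dimension, sign of $\Delta$, coordinatewise multilinear rank) are exactly the right invariants and do exhaust the incomparable pairs, namely $\{G_2,G_3\}$ and the three $D_2$ variants among themselves. This makes your argument strictly more complete than the paper's, at the cost of a little extra bookkeeping; the paper buys brevity by tacitly relying on \cref{tab:tensor-orbits} for the exclusions.
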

\begin{proof}
  To justify an edge corresponding to $\overline{A} \supset \overline{B}$ in the diagram it is sufficient to prove the following: there exists a sequence
  $( \U^{(j)}, \V^{(j)}, \W^{(j)} )_{j \in \mbb{N}}
  \subset \GL(2) \times \GL(2) \times \GL(2)$
  such that
  \begin{align}
    \lim_{j \rightarrow \infty}
    \Tn{G}_A \times_1 \U^{(j)} \times_2 \V^{(j)} \times_3 \W^{(j)}
    = \Tn{G}_B
  \end{align}
  where $\Tn{G}_i$ denotes the canonical form of orbit $i$.
  We only prove there exists an edge corresponding to
  $\overline{D_3} \supset \overline{D_2}$,
  as the other cases are nearly identical.
  We select the sequence
  \begin{align}
    (\U^{(j)},\V^{(j)},\W^{(j)}) =
    \left(
      \begin{bmatrix} 0 & 1 \\ \frac{1}{j} & 0 \end{bmatrix},
      \begin{bmatrix} 0 & 1 \\ 1 & 0 \end{bmatrix},
      \mathbf{I_2}
    \right) ,
  \end{align}
  which yields
  \begin{multline*}
      \lim_{j \rightarrow \infty}
      \begin{tensoreight}
        1 & 0 & 0 & 0 \\
        0 & 1 & 1 & 0
      \end{tensoreight}
      \times_1 \begin{bmatrix} 0 & 1 \\ \frac{1}{j} & 0 \end{bmatrix}
      \times_2 \begin{bmatrix} 0 & 1 \\ 1 & 0 \end{bmatrix}
      \times_3 \mbf{I}_2 \\
      =
      \lim_{j \rightarrow \infty}
      \begin{tensoreight}
        1 & 0 & 0 & 1 \\
        0 & \frac{1}{j} & 0 & 0 \\
      \end{tensoreight}
      =
      \begin{tensoreight}
        1 & 0 & 0 & 1 \\
        0 & 0 & 0 & 0
      \end{tensoreight}.
   \end{multline*}
\end{proof}

Consider one consequence of the Hasse diagram in \cref{fig:hasse-tensor}:
the 7-dimensional orbit $D_3$ is in the closure of both 8-dimensional orbits, $G_2$ and $G_3$.
Indeed, the closure of $D_3$ is the boundary between these two orbits.
Another way to see this is in terms of the sign of the hyperdeterminant: positive for $G_2$, negative for $G_3$, and zero for the closure of $D_3$.
Hence, by continuity of the hyperdeterminant, $D_3$ must lie between $G_2$ and $G_3$.

\subsection{Visualizations}\label{sec:visualizations}
We conclude this section with computational visualizations the geometry of ranks for matrices and tensors.
This will build intuition, informing our
discussion regarding \emph{ill-posedness} in the next section.
We have created an interactive tool at \webtool{}
that enables the user to zoom in and rotate the 3D visualizations displayed in \cref{fig:affine-intro,fig:matrix-affine,fig:generic,fig:D2pp}.
A user can alternatively specify their own center point 
and axes defining an affine slice to generate different renderings.
This tool is built on top of Spirulae.\footnote{%
 Spirulae: Web-Based Math Visualization is an open-source tool by Harry Chen,
 available at
 \href{https://harry7557558.github.io/spirulae/}{\texttt{https://harry7557558.github.io/spirulae/}}.}
Our website contains usage documentation, as well as links to the original and modified source code.
For those curious for more technical details, our README explains how the visualizer works and how it was adapted from the original tool.

\subsubsection{Matrix visualizations}
We begin with a $3 \times 3$ matrix case.
We cannot visualize the full 9-dimensional space of $3 \times 3$ matrices or the hypersurface where the determinant vanishes,
but we can visualize 2- and 3-dimensional affine slices through this space.
On the left of \cref{fig:matrix-affine},
we visualize an affine slice of the form
\begin{displaymath}
  \set{ \mbf{M} : \mbf{M} = \mbf{M}_0 + \lambda_1 \Mx{M}_1 + \lambda_2 \Mx{M}_2 }.
\end{displaymath}
The matrix $\Mx{M}{0}$ is the center point, and in this case we have selected
a rank-1 matrix so that in particular $\det(\Mx{M}{0})=0$. The matrices $\Mx{M}{1}$ and $\Mx{M}{2}$ form the axes and are chosen so that their vectorizations are orthogonal
and of unit length.

In three dimensions, we visualize the surface $\det(\Mx{M})=0$ where 
\begin{displaymath}
  \set{ \mbf{M} : \mbf{M} = \mbf{M}_0 + \lambda_1 \Mx{M}_1 + \lambda_2 \Mx{M}_2 + \lambda_3 \Mx{M}_3 }.
\end{displaymath}
The matrix $\Mx{M}{3}$ is the third axis, and it is of unit length and 
orthogonal to the $\Mx{M}{1}$ and $\Mx{M}{2}$ matrices previously used.
The ``empty'' space in the 3-dimensional visualization corresponds to rank-3 matrices.
See the right panel of \cref{fig:matrix-affine}.

\begin{figure}[!ht]
  \centering
  \resizebox{\textwidth}{!}{
    \includegraphics[height=1cm]{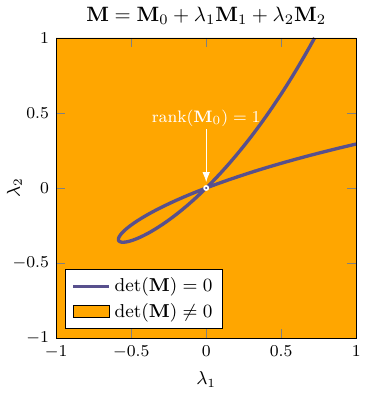}
    \includegraphics[height=1cm]{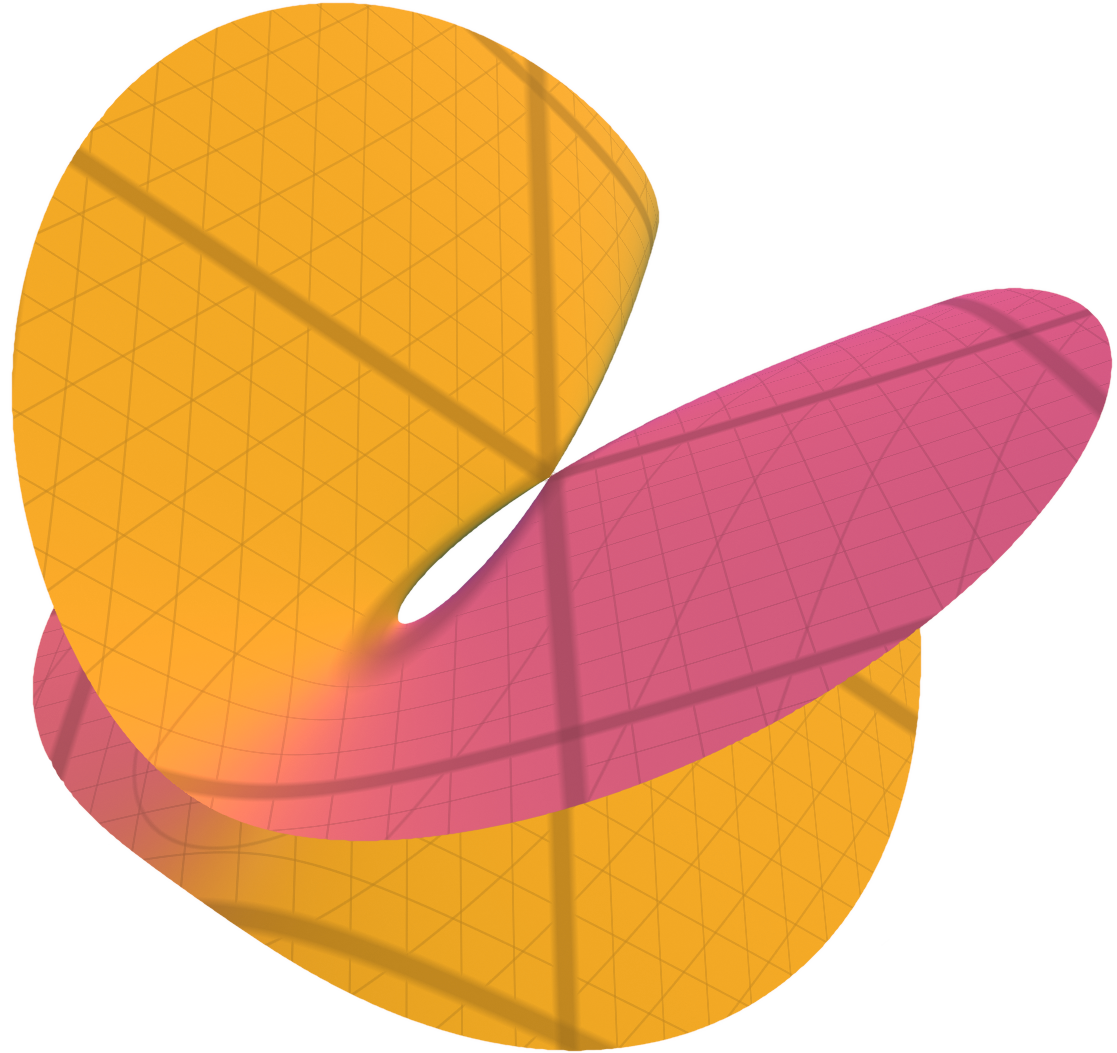}
    }
    \caption{Affine slices through $\Real^{3 \times 3}$ centered at a rank-1 matrix. The coordinate axes correspond to matrices that are pairwise orthogonal and of unit length.
    On the right, the visualized surface shows where the determinant is $0$.
  The lines  indicate the intersection of the surface with a Cartesian grid and are shown only to guide the eye.}
    \label{fig:matrix-affine}
  \end{figure}

The 2-dimensional slice of \cref{fig:matrix-affine} illustrates how rank-2 matrices are in the closure of rank-3 matrices.
Likewise, the center point is a rank-1 matrix in the closure of the rank-2 matrices.

\subsubsection{Tensor visualizations}
Now we turn our attention to analogous visualizations for $\ttt$ tensors.
We cannot view the 8-dimensional space of $\ttt$ tensors or the hypersurface in $\Real^{\ttt}$ where the hyperdeterminant vanishes, so we again visualize 2- and 3-dimensional affine slices through this space.
In the 2D visualization, we show the affine slice defined by
\begin{displaymath}
  \set{ \T : \T = \T_0 + \lambda_1 \T_1 + \lambda_2 \T_2 }.
\end{displaymath}
We will use different choices for $\T_0$ in the different visualizations.
Tensors $\T_1$ and $\T_2$ are the axes and are chosen so that their vectorizations are orthogonal and of unit length.
In the two-dimensional visualizations, we distinguish between $G_2$ and $G_3$ tensors by coloring the regions, with $\Delta(\T)=0$ defining the
border between the two sets.  
Here, $\Delta(\T)=0$ are so-called plane quartic curves, which are classically studied curves in algebraic geometry \cite{dolgachev2012classical}.  
In the 3-dimensional visualizations, we show the surface $\Delta(\T)=0$, and coloring is only related to the normal direction of the surface to increase visual contrast and add depth.
The third axis is chosen to be orthogonal to the first two and of unit length.
``Empty'' points in the 3-dimensional visualization correspond to $G_2$ and $G_3$ tensors, where $\Delta(\T) \neq 0$.

In \cref{fig:generic}, the center point $\T_0$ is a generic tensor of rank 3.
On the left is a 2D slice through $\Real^{\ttt}$, and on the right is a 3-dimensional visualization of the surface $\Delta(\T)=0$.
In the 2D version, the curve where the hyperdeterminant is zero divides the two positive volume regions, as we deduced from the Hasse diagram in \cref{fig:hasse-tensor}.
The 3D visualization captures singularities not seen in the 2D visualization.
This is no coincidence: since $D_2 \cup D_2^\prime \cup D_2^{\prime\prime}$ has dimension 5 in the ambient space of dimension 8, these orbits are expected to have intersections with a generic affine space of dimension 3; the key here is that the sum of the dimensions of the surface and the affine space is greater than or equal to the dimension of the ambient space.
The same is not true of $D_1$ as its dimension (4) is too low.

\begin{figure}
  \centering
  \resizebox{\textwidth}{!}{
    \includegraphics[height=1cm]{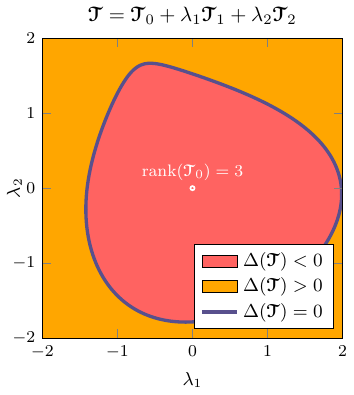}
    \includegraphics[height=1cm]{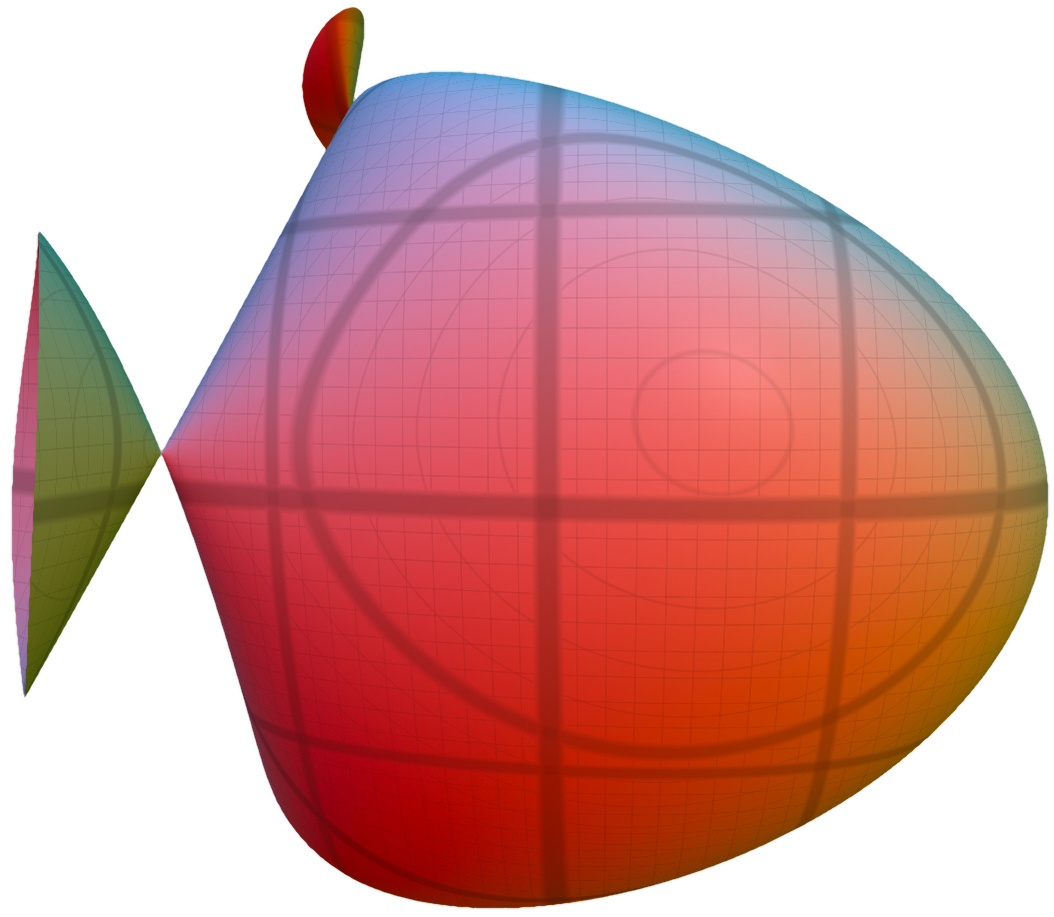}
  }
  \caption{Affine slices through $\Real^{\ttt}$ centered at a tensor in $G_3$. The coordinate axes correspond to tensors that are pairwise orthogonal and of unit length.
  On the right, the visualized surface shows where the hyperdeterminant is $0$.
  The lines  indicate the intersection of the surface with a Cartesian grid and are shown only to guide the eye.
    }
  \label{fig:generic}
\end{figure}

In \cref{fig:D2pp} we visualize the case where $\T_0 \in D_2^{\prime\prime}$, and the behavior is quite different.
The center point $\T_0$ now sits on the separating hypersurface between the two positive volume rank-2 and rank-3 components.
Furthermore, it is apparent that the geometry is more complex around such points than for example the random $G_3$ point in \cref{fig:generic}.
Indeed, all orbits below $D_3$ in \cref{fig:hasse-tensor} are \emph{singularities} of the separating hypersurface.
Algebraically, this means that the \emph{gradient} of the hyperdeterminant is $0$ at such points (where the hyperdeterminant is already 0) \cite[Section~3.3]{bochnak_real_1998}.

\begin{figure}
  \centering
  \resizebox{\textwidth}{!}{
    \includegraphics[height=1cm]{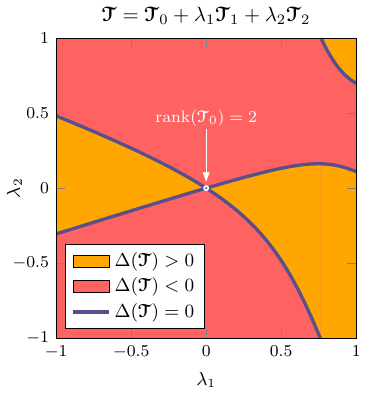}
    \includegraphics[height=1cm]{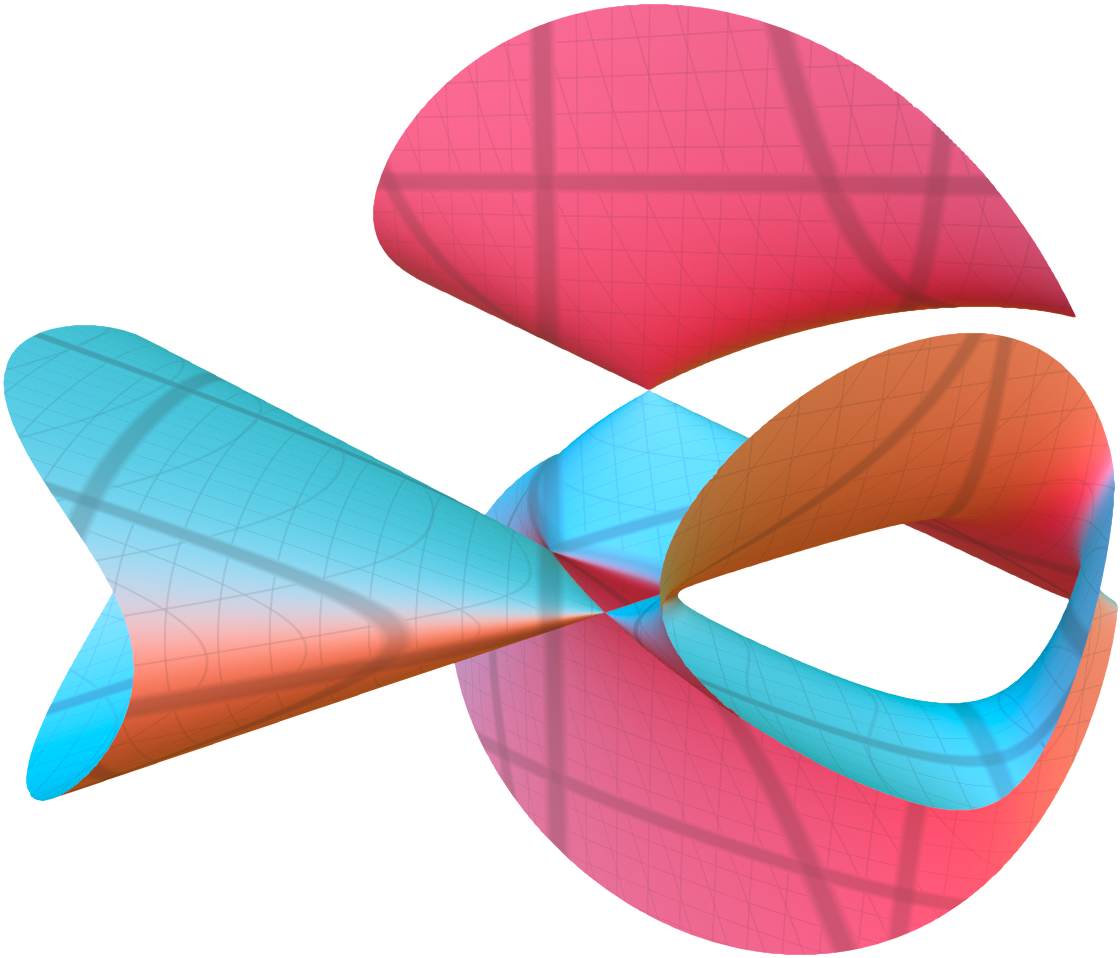}
  }
  \caption{
    Affine slices through $\Real^{\ttt}$ centered at a tensor in $D_2^{\prime\prime}$. The coordinate axes correspond to tensors that are pairwise orthogonal and of unit length.
    On the right, the visualized surface shows where the hyperdeterminant is $0$.
  The lines  indicate the intersection of the surface with a Cartesian grid and are shown only to guide the eye.
    }
  \label{fig:D2pp}
\end{figure}

As further  evidence that the hyperdeterminant zero hypersurface is singular on orbits below $D_3$, consider \cref{fig:affine-intro}.
In this figure the 3-dimensional affine slice is centered around the canonical form of $D_1$, which has rank 1.
These singularities will play a
role in the ill-posedness of low-rank approximation for tensors, discussed next.

\section{Low-rank approximation and ill-posedness}
\label{sec:ill-posedness}
In this section, we explore the problems of low-rank matrix and tensor approximation.
For our purposes, we say a problem \emph{ill-posed} if no solution exists (or is attained)
and \emph{well-posed} if it has a solution.
The matrix low-rank approximation problem is known to be well-posed according to the Eckart-Young theorem \cite{eckart_approximation_1936}.
Unfortunately, the tensor low-rank approximation problem may be ill-posed.

In the tensor case, the low-rank approximation problem is as follows.
Given a tensor $\Tn{A}$ (possibly of rank greater than $r$), we seek a tensor $\Tn{B}$ of rank at most $r$ that is ``closest'' to $\Tn{A}$ in the sense of minimizing the distance:
\begin{equation}\label{eqn:tensor-low-rank}
  \min_{\rank(\Tn{B}) \leq r} \frac{1}{2}\norm{\Tn{A} - \Tn{B}}^2.
\end{equation}
Here, the norm is induced from vectorization, equal to the
square root of the sum of the squares of the entries.
Since the tensor low-rank approximation problem can fail to have a solution (as we will show shortly), it must be that the set of tensors with rank $\leq r$ is not closed.
Equivalently, and unlike matrix rank, tensor rank is not lower-semicontinuous \cite{de_silva_tensor_2008}.\footnote{The referenced paper mistakenly uses ``upper semicontinuous'' in places where ``lower semicontinuous'' is intended and correct.}
For $\ttt$ tensors we will show that many boundary points are missing.  For those boundary points which do exist, the local geometry pathologically prevents them being solutions to low-rank approximation problems.
From another point of view, if we replace the feasible domain in problem \eqref{eqn:tensor-low-rank} by the closure, then the minimum can be attained at a tensor of rank strictly greater than $r$, and such a tensor can be written as the limit of some sequence of rank $\leq r$ tensors.

\subsection{Border rank}

The concepts of closedness, rank, and sequences of bounded rank tensors are nicely connected by the last notion of tensor rank we consider.
We say a tensor $\Tn{T}$ has \emph{border rank} $r$ if $r$ is the smallest integer such that, for any $\epsilon>0$, there exists a rank-$r$ tensor $\Tn{T}_\epsilon$ with $\norm{\Tn{T} - \Tn{T}_\epsilon} \leq \epsilon$.
Informally, a tensor is border rank $r$ if it may be approximated to arbitrary accuracy by rank-$r$ tensors.
The border rank of a tensor is always less than or equal to its rank.

\begin{exercise}[Multilinear rank lower bounds border rank]
  \label{exr:mrank-brank}
 Prove that \cref{exr:mrank-rank} extends to border rank as well, that is show that the maximal flattening rank is a lower bound for the border rank.
 \textit{Hint:} The closure of the set of rank-$r$ tensors includes all border rank-$r$ tensors, while the set of tensors with all flattening ranks $\leq r$ is already a closed set.
\end{exercise}

\begin{exercise}
  From the Hasse diagram in \cref{fig:hasse-matrix} one can see that the 
  closure of rank-$r$ matrices contains all matrices of lower rank. 
  (a) Prove that border rank is always equal to the rank for a matrix.
  (b) Use the Hasse diagram in \cref{fig:hasse-tensor} to argue that the same thing is not true for tensors.
\end{exercise}

\begin{exercise}
  Consider the the canonical form of $G_2$,
  $\begin{tensoreight}[2ex]
      1 & 0 & 0 & 0 \\
      0 & 0 & 0 & 1
    \end{tensoreight}$.
  This tensor has the rank-2 decomposition
  $\begin{bsmallmatrix} 1 \\ 0 \end{bsmallmatrix} \circ
   \begin{bsmallmatrix} 1 \\ 0 \end{bsmallmatrix} \circ
   \begin{bsmallmatrix} 1 \\ 0 \end{bsmallmatrix} +
   \begin{bsmallmatrix} 0 \\ 1 \end{bsmallmatrix} \circ
   \begin{bsmallmatrix} 0 \\ 1 \end{bsmallmatrix} \circ
   \begin{bsmallmatrix} 0 \\ 1 \end{bsmallmatrix}$.
   Let $\T$ be any rank-1 tensor.
   Prove that the elements of the sequence
  $\begin{bsmallmatrix} 1 \\ 0 \end{bsmallmatrix} \circ
   \begin{bsmallmatrix} 1 \\ 0 \end{bsmallmatrix} \circ
   \begin{bsmallmatrix} 1 \\ 0 \end{bsmallmatrix} +
   \begin{bsmallmatrix} 0 \\ 1 \end{bsmallmatrix} \circ
   \begin{bsmallmatrix} 0 \\ 1 \end{bsmallmatrix} \circ
   \begin{bsmallmatrix} 0 \\ 1 \end{bsmallmatrix} +
   \frac{1}{k}\T$
  are not rank-$3$ for sufficiently large $k$.
  That is, prove the existence of $\hat{k}$ such that the sequence elements are rank-$2$ for all $k \geq \hat{k}$.
  This demonstrates you cannot construct sequences which drop rank only in the limit, as one can for the matrix case (\cref{eqn:matrix-rank-drop}).
\end{exercise}

To see that, unlike matrices, tensors can have different rank and border rank we prove that all $D_3$ tensors (which are rank-3) have border rank 2.

\begin{theorem}[$D_3$ border rank \cite{bini_approximate_1980,de_silva_tensor_2008}]
  \label{thm:d3-brank}
  Tensors in orbit $D_3$ have border rank 2.
\end{theorem}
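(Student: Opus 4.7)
The plan is to handle both bounds on border rank separately, working with the canonical form of $D_3$ and then transferring the result to the entire orbit by invariance. Since border rank is invariant under the $\GL(2) \times \GL(2) \times \GL(2)$ action (stated in \cref{sec:orbits-tensor-rank}), it suffices to prove border rank equal to 2 for the canonical form
$$\G_{D_3} = \begin{tensoreight}[3ex] 1 & 0 & 0 & 1 \\ 0 & 0 & 1 & 0 \end{tensoreight} = \vu_1 \circ \vu_1 \circ \vu_1 + \vu_1 \circ \vu_2 \circ \vu_2 + \vu_2 \circ \vu_1 \circ \vu_2,$$
where $\vu_1, \vu_2$ are the standard basis of $\Real^2$.

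For the lower bound $\brank(\G_{D_3}) \geq 2$, I would invoke \cref{exr:mrank-brank}: the multilinear rank is a lower bound for border rank. Since $\G_{D_3}$ has multilinear rank $(2,2,2)$ (recorded in \cref{tab:tensor-orbits}), we conclude $\brank(\G_{D_3}) \geq 2$.

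For the upper bound $\brank(\G_{D_3}) \leq 2$, the main step is to exhibit an explicit Bini-style approximating sequence of rank-2 tensors. Taking cue from the fact that $\G_{D_3}$ looks like a first-order ``derivative'' of a rank-1 tensor along three directions, I would define the paths $\va(\epsilon) = \vu_1 + \epsilon \vu_2$, $\vb(\epsilon) = \vu_1 + \epsilon \vu_2$, $\vc(\epsilon) = \vu_2 + \epsilon \vu_1$, and consider
$$\T_\epsilon \;=\; \frac{1}{\epsilon}\bigl[\va(\epsilon) \circ \vb(\epsilon) \circ \vc(\epsilon) - \vu_1 \circ \vu_1 \circ \vu_2\bigr].$$
Each $\T_\epsilon$ is a difference of two rank-1 tensors, hence has rank at most 2. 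Expanding the triple outer product and cancelling the $O(1/\epsilon)$ term against the subtracted $\vu_1 \circ \vu_1 \circ \vu_2$ leaves exactly the three ``derivative'' terms $\vu_2 \circ \vu_1 \circ \vu_2 + \vu_1 \circ \vu_2 \circ \vu_2 + \vu_1 \circ \vu_1 \circ \vu_1$ at order $\epsilon^0$, plus $O(\epsilon)$ remainder. Therefore $\T_\epsilon \to \G_{D_3}$ as $\epsilon \to 0$, establishing $\brank(\G_{D_3}) \leq 2$.

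The main conceptual obstacle is finding the right paths $\va, \vb, \vc$ so that the derivative of their outer product at $\epsilon = 0$ matches $\G_{D_3}$; I expect this to be the only nontrivial piece since it requires matching the pairwise-shared-slot structure of the three summands against the combinatorics of the product rule (the terms $\vu_1 \circ \vu_1 \circ \vu_1$ and $\vu_1 \circ \vu_2 \circ \vu_2$ share the first coordinate, $\vu_1 \circ \vu_1 \circ \vu_1$ and $\vu_2 \circ \vu_1 \circ \vu_2$ share the second, and $\vu_1 \circ \vu_2 \circ \vu_2$ and $\vu_2 \circ \vu_1 \circ \vu_2$ share the third), but once the assignment above is written down the verification is routine algebra. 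Finally, invariance of border rank under the group action transfers the equality $\brank = 2$ from $\G_{D_3}$ to every tensor in $D_3$.
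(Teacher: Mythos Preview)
Your proposal is correct and follows essentially the same route as the paper: a Bini-style difference-quotient sequence of rank-$2$ tensors converging to the canonical form of $D_3$ gives the upper bound, the multilinear rank $(2,2,2)$ via \cref{exr:mrank-brank} gives the lower bound, and invariance of border rank under $\GL(2)^{\times 3}$ transports the conclusion to the whole orbit. The only cosmetic difference is that the paper writes the sequence for general linearly independent pairs $(\mbf{x}_i,\mbf{y}_i)$ before specializing, whereas you specialize to $\vu_1,\vu_2$ from the outset; the content is the same.
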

\begin{proof}
  Let $\Vc{x}{1}$, $\Vc{x}{2}$, $\Vc{x}{3}$, $\Vc{y}{1}$, $\Vc{y}{2}$, and $\Vc{y}{3}$ be vectors in $\Real^2$ such that $\set{\Vc{x}{i}, \Vc{y}{i}}$ are
  linearly independent for all $i$.
  For $k \in \mathbb{N}$, define
  \begin{align}
    \label{eqn:rank-jump-sequence-1}
    \T_k &= k \left(\mbf{x}_1 + \frac{1}{k} \mbf{y}_1\right) \circ
    \left(\mbf{x}_2 + \frac{1}{k} \mbf{y}_2\right) \circ
    \left(\mbf{x}_3 + \frac{1}{k} \mbf{y}_3\right)
    - k \mbf{x}_1 \circ \mbf{x}_2 \circ \mbf{x}_3 \\
    \label{eqn:rank-jump-sequence-2}
    &=
    \begin{aligned}[t]
      &\mbf{y}_1 \circ \mbf{x}_2 \circ \mbf{x}_3 +
      \mbf{x}_1 \circ \mbf{y}_2 \circ \mbf{x}_3 +
      \mbf{x}_1 \circ \mbf{x}_2 \circ \mbf{y}_3 \\
      &+ \frac{1}{k}\left(
        \mbf{x}_1 \circ \mbf{y}_2 \circ \mbf{y}_3 +
        \mbf{y}_1 \circ \mbf{x}_2 \circ \mbf{y}_3 +
        \mbf{y}_1 \circ \mbf{y}_2 \circ \mbf{y}_3
        \right) +
      \frac{1}{k^2}\left(\mbf{y}_1 \circ \mbf{y}_2 \circ \mbf{y}_2\right), \nonumber
    \end{aligned}
  \end{align}
  a sequence of tensors originally from \cite{bini_approximate_1980}.
  Letting $\Tn{T}^* = \lim_{k \rightarrow \infty} \T_k$ it is clear that the border rank of $\T^*$ is $\leq 2$ since $\T_k$ has rank at most 2 from \cref{eqn:rank-jump-sequence-1}.
  
  We will show that $\Tn{T}^*$ can be any $D_3$ tensor.
  If we set
  $\mbf{x}_2 = \mbf{x}_3 = \mbf{y}_1 = 
  \begin{bsmallmatrix} 1 \\ 0 \end{bsmallmatrix}$
  and
  $\mbf{x}_1 = \mbf{y}_2 = \mbf{y}_3 = 
  \begin{bsmallmatrix} 0 \\ 1 \end{bsmallmatrix}$,
  then $\Tn{T}^*$ is the canonical form of $D_3$.
  Then an action by an element of $\GL(2) \times \GL(2) \times \GL(2)$ can transform to any other $D_3$ tensor by definition of orbits; since this action can be written as a transformation of the vectors $\mbf{x}_i, \mbf{y}_i$ defining the sequence we can construct the sequence for any $D_3$ tensor.
  Hence all $D_3$ tensors have border rank $\leq 2$.
  To conclude that $D_3$ tensors have border rank $\geq 2$, note that the multilinear rank of the canonical form of $D_3$ is $(2,2,2)$ by inspection.
  Since multilinear rank lower bounds border rank (\cref{exr:mrank-brank}) and border rank is invariant under  $\GL(2) \times \GL(2) \times \GL(2)$,  all $D_3$ tensors have border rank 2.
\end{proof}

\subsection{Optimality and cones}
\label{sec:cones-optimality}
We use tools from optimization theory to reason about the existence of solutions
for low-rank approximation problems.
Consider a general optimization problem that seeks to minimize a differentiable function
$f : \Real^m \rightarrow \Real$
over a feasible set $\Omega \subset \Real^m$.
We can state the first-order necessary conditions in terms of the geometry of
the feasible region by relating the direction of the gradient with the tangent cone.

A \emph{cone}, $K \subset \Real^m$, is a set of vectors that is closed under nonnegative scalar multiplication.
Its \emph{polar cone}, denoted $K^\circ$, is the set of vectors that form an angle of at least 90 degrees with every vector in $K$, i.e.,
$K^\circ =
\set{ \mbf{v} \in \Real^m : \mbf{v}^{\Tr} \mbf{w} \leq 0 \; \, \forall \, \mbf{w} \in K }$.

\begin{exercise}\label{exr:polar-cone}
  Show that if $K_1$ and $K_2$ are cones with $K_1 \subset K_2$, then $K_2^\circ \subset K_1^\circ$.
\end{exercise}

Informally, the \emph{tangent cone} of a set $\Omega$ at a  point $\mbf{x}$ in the set,
denoted $T_{\Omega}(\mbf{x})$, can be thought of as the limit of the set of $\epsilon$-feasible directions.
The formal definition is given below, and an illustration can be found in \cref{fig:tangent-normal-cones}.

\begin{definition}
  [Bouligand tangent cone {\cite[Definition 3.11]{ruszczynski_nonlinear_2006}}]
  \label{def:tangent-cone}
  The \emph{tangent cone} of a set $\Omega$ at a point $\mbf{x}$ in $\Omega$  is
  \begin{displaymath}
    T_{\Omega}(\mbf{x})
    = \Set{
       \lim_{k \rightarrow \infty} \frac{\mbf{x}_k - \mbf{x}}{\tau_k}
      :
    \set{ \mbf{x}_k } \subset \Omega \text{ with } \mbf{x}_k \rightarrow \mbf{x} \text{ and }
    \set{ \tau_k } \subset \mbb{R}_{>0} \text{ with } \tau_k \downarrow 0}.
  \end{displaymath}
\end{definition}

The polar cone of the tangent cone is called the \emph{normal cone}.  It is given by
\begin{align}
  N_{\Omega}(\mbf{x}) = \left( T_{\Omega}(\mbf{x}) \right)^\circ =
  \set{ \mbf{v} \in \Real^m : \mbf{v}^{\Tr} \mbf{w} \geq 0 \; \, \forall \, \mbf{w} \in T_{\Omega}(\mbf{x}) } .
\end{align}

With tangent and normal cones defined, we are in a position to state the first-order optimality condition precisely.

\begin{theorem}[Necessary first-order optimality condition {\cite[Theorem 3.24]{ruszczynski_nonlinear_2006}}]
  \label{thm:first-order-necessary}
  If $\mbf{x}^*$ is a local minimizer of the problem
  $\min_{\mbf{x} \in \Omega} f(\mbf{x})$
  then
  \begin{align*}
    -\nabla f(\mbf{x}^*) \in N_{\Omega}(\mbf{x}^*).
  \end{align*}
\end{theorem}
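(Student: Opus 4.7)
The plan is to unwind the two definitions (tangent cone and normal cone) and show directly that $-\nabla f(\mbf{x}^*)$ has nonpositive inner product with every element of $T_{\Omega}(\mbf{x}^*)$. Fix an arbitrary tangent vector $\mbf{w} \in T_{\Omega}(\mbf{x}^*)$. By \cref{def:tangent-cone} there exist sequences $\set{\mbf{x}_k} \subset \Omega$ with $\mbf{x}_k \rightarrow \mbf{x}^*$ and $\tau_k \downarrow 0$ such that $(\mbf{x}_k - \mbf{x}^*)/\tau_k \rightarrow \mbf{w}$. The goal reduces to showing $\nabla f(\mbf{x}^*)^{\Tr} \mbf{w} \geq 0$, because then $(-\nabla f(\mbf{x}^*))^{\Tr} \mbf{w} \leq 0$ for every such $\mbf{w}$, giving $-\nabla f(\mbf{x}^*) \in N_{\Omega}(\mbf{x}^*)$.

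The key step is a first-order Taylor expansion at $\mbf{x}^*$: since $f$ is differentiable,
\begin{equation*}
  f(\mbf{x}_k) = f(\mbf{x}^*) + \nabla f(\mbf{x}^*)^{\Tr}(\mbf{x}_k - \mbf{x}^*) + o(\norm{\mbf{x}_k - \mbf{x}^*}).
\end{equation*}
Because $\mbf{x}^*$ is a local minimizer over $\Omega$ and $\mbf{x}_k \in \Omega$ converges to $\mbf{x}^*$, for all sufficiently large $k$ we have $f(\mbf{x}_k) \geq f(\mbf{x}^*)$, so $\nabla f(\mbf{x}^*)^{\Tr}(\mbf{x}_k - \mbf{x}^*) + o(\norm{\mbf{x}_k - \mbf{x}^*}) \geq 0$. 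I would then divide through by $\tau_k > 0$ and pass to the limit.

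The one subtlety, which is the main thing to handle carefully, is controlling the error term $o(\norm{\mbf{x}_k - \mbf{x}^*})/\tau_k$ in the limit. I would write it as $\bigl(o(\norm{\mbf{x}_k - \mbf{x}^*})/\norm{\mbf{x}_k - \mbf{x}^*}\bigr) \cdot \bigl(\norm{\mbf{x}_k - \mbf{x}^*}/\tau_k\bigr)$ when $\mbf{x}_k \neq \mbf{x}^*$. The first factor tends to $0$ by differentiability of $f$, while the second factor tends to $\norm{\mbf{w}}$ (hence is bounded) because $(\mbf{x}_k - \mbf{x}^*)/\tau_k \to \mbf{w}$. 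The degenerate case where $\mbf{x}_k = \mbf{x}^*$ for infinitely many $k$ forces $\mbf{w} = \mbf{0}$, for which the inequality holds trivially. Taking limits then yields $\nabla f(\mbf{x}^*)^{\Tr} \mbf{w} \geq 0$, completing the argument.

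Since $\mbf{w} \in T_{\Omega}(\mbf{x}^*)$ was arbitrary, the definition of the polar cone (and hence of $N_\Omega(\mbf{x}^*) = T_\Omega(\mbf{x}^*)^{\circ}$) gives $-\nabla f(\mbf{x}^*) \in N_{\Omega}(\mbf{x}^*)$, as claimed. I expect the expansion-and-division argument to be essentially the entire proof; the only place to slip up is justifying the vanishing of the little-o term uniformly along the sequence, which is why I would single that out explicitly.
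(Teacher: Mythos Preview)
The paper does not supply its own proof of this theorem; it is quoted as a standard result from \cite[Theorem~3.24]{ruszczynski_nonlinear_2006}. Your argument is the textbook proof and is correct: pick a tangent direction, realize it via feasible sequences, use local optimality together with a first-order expansion, divide by $\tau_k$, and control the remainder by factoring $o(\|\mbf{x}_k-\mbf{x}^*\|)/\tau_k$ as you do. The handling of the degenerate case $\mbf{x}_k=\mbf{x}^*$ is also fine. There is nothing to compare against in the paper itself, and nothing to fix in your write-up.
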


\begin{figure}[!ht]
  \centering
  \includegraphics{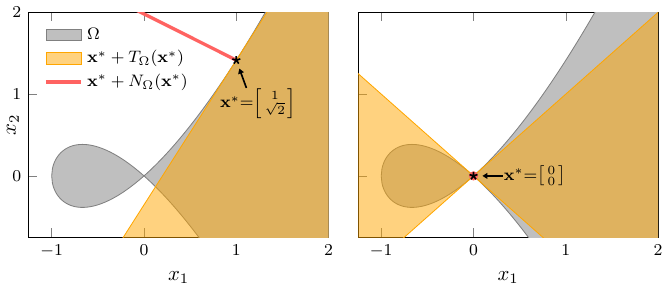}
  \caption{Tangent and normal cones with respect to
    $\Omega \equiv \set{\mbf{x} \in \Real^2 : x_1^2+x_1^3-x_2^2 \geq 0}$.}
  \label{fig:tangent-normal-cones}
\end{figure}

\Cref{fig:tangent-normal-cones} shows examples of tangent and normal
cones for $\Omega \in \Real^2$.
The feasible space is the set
$\Omega\equiv\set{\mathbf{x} \in \Real^2 : x_1^2+x_1^3-x_2^2 \geq 0}$.
In the left plot,
the tangent cone is a half space and the normal cone is a half line.
In the right plot,
the tangent cone is more complex and the normal cone is just $\set{\mbf{0}}$.

\begin{exercise}
  For $\Omega$ in \cref{fig:tangent-normal-cones}, what are $T_{\Omega}(\mbf{x}^*)$ and $N_{\Omega}(\mbf{x}^*)$ at
  $\mbf{x}^* = \begin{bsmallmatrix} -1 \\ 0 \end{bsmallmatrix}$?
\end{exercise}

Computing complete tangent or normal cones can be difficult in practice, making it difficult to check if the necessary condition in \cref{thm:first-order-necessary} is satisfied.
However, the following basic results provide some leverage.

\begin{proposition}[{\cite[Lemma 3.12]{levin_effect_2025}}]
  \label{prp:linear-subspace-tangent}
  Let $\Omega \subset \Real^m$ and consider any differentiable map $p: \Real^n \rightarrow \Real^m$ with image contained in $\Omega$.
  Then for any $\mbf{x} \in \Real^m$, the column space of the Jacobian of $p$ at $\mbf{x}$ lies in the tangent cone of $\Omega$ at $p(\mbf{x})$, i.e.,
  $\col(Dp(\mbf{x})) \subset T_{\Omega}(p(\mbf{x}))$.
\end{proposition}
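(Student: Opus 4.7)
The plan is to unfold the definition of the Bouligand tangent cone (\cref{def:tangent-cone}) and construct an explicit sequence realizing any given tangent vector of the prescribed form. Fix $\mathbf{x} \in \mathbb{R}^n$ and let $\mathbf{v} \in \col(Dp(\mathbf{x}))$, so that $\mathbf{v} = Dp(\mathbf{x})\mathbf{u}$ for some $\mathbf{u} \in \mathbb{R}^n$. I want to exhibit sequences $\{\mathbf{y}_k\} \subset \Omega$ with $\mathbf{y}_k \to p(\mathbf{x})$ and $\{\tau_k\} \subset \mathbb{R}_{>0}$ with $\tau_k \downarrow 0$ such that $(\mathbf{y}_k - p(\mathbf{x}))/\tau_k \to \mathbf{v}$.

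The natural choice is $\tau_k = 1/k$ and $\mathbf{y}_k = p(\mathbf{x} + \tau_k \mathbf{u})$. Two things are immediate: first, $\mathbf{y}_k \in \Omega$ for every $k$ because the image of $p$ is contained in $\Omega$ by hypothesis; second, $\mathbf{y}_k \to p(\mathbf{x})$ by continuity of $p$ (which follows from differentiability). The content is then the limit computation. Using the definition of differentiability at $\mathbf{x}$,
\begin{equation*}
 p(\mathbf{x} + \tau_k \mathbf{u}) = p(\mathbf{x}) + \tau_k\, Dp(\mathbf{x}) \mathbf{u} + o(\tau_k) = p(\mathbf{x}) + \tau_k \mathbf{v} + o(\tau_k),
\end{equation*}
so that $(\mathbf{y}_k - p(\mathbf{x}))/\tau_k = \mathbf{v} + o(\tau_k)/\tau_k \to \mathbf{v}$, which places $\mathbf{v}$ in $T_{\Omega}(p(\mathbf{x}))$ by \cref{def:tangent-cone}.

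Since $\mathbf{v}$ was arbitrary in $\col(Dp(\mathbf{x}))$, this establishes the inclusion. I will just note explicitly that the argument captures the full \emph{linear} subspace $\col(Dp(\mathbf{x}))$, not merely a cone: both $\mathbf{v}$ and $-\mathbf{v}$ are obtained since $-\mathbf{v} = Dp(\mathbf{x})(-\mathbf{u})$, and all nonnegative scalings follow from $\lambda \mathbf{v} = Dp(\mathbf{x})(\lambda \mathbf{u})$. There is no real obstacle here beyond cleanly invoking the little-$o$ remainder from differentiability; the main thing to avoid is conflating the Bouligand tangent cone with, say, the Clarke tangent cone, or imposing smoothness of $\Omega$ itself, which the statement does not require.
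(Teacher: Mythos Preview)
Your proof is correct and is the standard direct argument: pick a curve $t \mapsto p(\mathbf{x} + t\mathbf{u})$ through $p(\mathbf{x})$ lying in $\Omega$, and read off the tangent vector from the first-order Taylor expansion. The paper itself does not supply a proof of this proposition; it simply cites the result from \cite[Lemma~3.12]{levin_effect_2025}, so there is nothing in the paper to compare against beyond noting that your argument matches what one would expect from that reference.
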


\begin{proposition}\label{prp:basis-normal-zero}
  Let $\Omega \subset \Real^m$ and  $\mbf{x} \in \Omega$.
  Further, let $\set{\mbf{v}_1,\dots,\mbf{v}_m}$ be a basis for $ \Real^m$.
  If $\pm \mbf{v}_i \in T_{\Omega}(\mbf{x})$ for all $i \in [m]$,
  then $N_{\Omega}(\mbf{x})=\set{\mbf{0}}$.
\end{proposition}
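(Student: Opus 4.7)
The plan is to work directly from the polar-cone characterization of $N_\Omega(\mbf{x})$ that was introduced earlier in \cref{sec:cones-optimality}. First I would take an arbitrary $\mbf{w} \in N_\Omega(\mbf{x})$, which by definition satisfies $\mbf{w}^{\Tr} \mbf{v} \leq 0$ for every $\mbf{v} \in T_\Omega(\mbf{x})$. The hypothesis supplies both $\mbf{v}_i$ and $-\mbf{v}_i$ inside the tangent cone, so applying the polar inequality to each in turn gives $\mbf{w}^{\Tr} \mbf{v}_i \leq 0$ and $-\mbf{w}^{\Tr} \mbf{v}_i \leq 0$, and combining these forces $\mbf{w}^{\Tr} \mbf{v}_i = 0$ for every $i \in \{1,\ldots,m\}$.

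From there, the basis assumption closes the argument immediately: since $\{\mbf{v}_1,\ldots,\mbf{v}_m\}$ spans $\Real^m$, a vector orthogonal to each $\mbf{v}_i$ must be the zero vector. Hence $\mbf{w} = \mbf{0}$, yielding $N_\Omega(\mbf{x}) \subseteq \{\mbf{0}\}$. The reverse inclusion is automatic because $\mbf{0}^{\Tr} \mbf{v} = 0 \leq 0$ for every $\mbf{v}$, so the origin always sits in a polar cone. Combining the two inclusions completes the proof.

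The main (and essentially only) obstacle is a bookkeeping one: the excerpt's own statement of $N_\Omega(\mbf{x})$ uses a $\geq 0$ sign while its polar-cone definition uses $\leq 0$, so I would explicitly invoke the identity $N_\Omega(\mbf{x}) = (T_\Omega(\mbf{x}))^\circ$ and then work from the polar-cone definition to avoid any sign ambiguity. No deeper machinery is required; the result is a one-line consequence of the fact that any cone containing an antipodal pair $\pm \mbf{v}_i$ is annihilated by every element of its polar, and a basis provides enough such pairs to pin $\mbf{w}$ down to the origin.
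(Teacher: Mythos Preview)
Your proof is correct and follows exactly the approach the paper intends: the paper leaves this proposition as an exercise with the hint ``using the definition of the normal cone,'' and your argument does precisely that. Your observation about the sign inconsistency between the paper's polar-cone and normal-cone displays is also accurate, though as you note it is immaterial here since the hypothesis supplies both $\mbf{v}_i$ and $-\mbf{v}_i$.
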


\begin{exercise}
  Prove \cref{prp:basis-normal-zero} using the definition of the normal cone.
\end{exercise}

\subsection{Best low-rank matrix approximation}
Amazingly, even though the set of matrices with rank less than or equal to $r$ is geometrically complicated, the low-rank matrix approximation has an optimal solution which is both easy to describe and efficient to compute.
The celebrated Eckart-Young Theorem states that the best rank-$r$ approximation is given by the rank-$r$ truncated SVD of the matrix whose approximation we seek \cite{eckart_approximation_1936}.

We demonstrate  the optimization tools introduced previously to prove a statement related to (but  weaker than) the Eckart-Young Theorem.
Our purpose is that the proof and its technique provide geometric intuition for the tensor results that follow.

\begin{proposition}
  \label{prp:matrix-no-nearest}
  Let $\A \in \Real^{m \times n}$ with $m \geq n$, and $\rank(\A) \geq r$.
  Then a solution to
  \begin{displaymath}
    \min_{ \rank(\B) \leq r} \frac{1}{2}\norm{\mathbf{A} - \mathbf{B}}_F^2
  \end{displaymath}
  exists and is of rank exactly $r$.
\end{proposition}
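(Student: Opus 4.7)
The plan is to split the proposition into two parts: (i) a solution exists, and (ii) every solution has rank exactly $r$. For (i) I would rely on the fact, established in \cref{sec:rank-semialgebraic}, that $\Omega := \{\B \in \Real^{m \times n} : \rank(\B) \leq r\}$ is semi-algebraic and, unlike its tensor analogue, closed in $\Real^{m \times n}$ (it is cut out by the vanishing of all $(r+1) \times (r+1)$ minors, which are continuous functions of the entries). Since the zero matrix lies in $\Omega$, the optimal value is at most $\tfrac{1}{2}\norm{\A}_F^2$, so I can intersect $\Omega$ with the closed ball $\{\B : \norm{\B - \A}_F \leq \norm{\A}_F\}$ to obtain a compact feasible set on which the continuous objective attains its minimum.

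For (ii) I would argue by contradiction using the first-order apparatus from \cref{sec:cones-optimality}. Suppose an optimizer $\B^*$ has rank $s < r$. The crucial observation is that for any rank-$1$ matrix $\M = \mbf{u} \circ \mbf{v}$ one has $\rank(\B^* + t \M) \leq s + 1 \leq r$ for every $t \in \Real$, so the entire line $t \mapsto \B^* + t \M$ lies in $\Omega$; applying \cref{def:tangent-cone} with $\tau_k = 1/k$ then gives $\pm \M \in T_{\Omega}(\B^*)$. The $mn$ elementary matrices $\mbf{e}_i \mbf{e}_j^{\Tr}$ form a basis of $\Real^{m \times n}$ that consists entirely of rank-$1$ matrices, so \cref{prp:basis-normal-zero} collapses the normal cone: $N_{\Omega}(\B^*) = \{\mathbf{0}\}$. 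The first-order necessary condition \cref{thm:first-order-necessary} applied to $f(\B) = \tfrac{1}{2}\norm{\A - \B}_F^2$ then forces $-\nabla f(\B^*) = \A - \B^* = \mathbf{0}$, which is impossible because $\rank(\A) \geq r > s = \rank(\B^*)$.

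I do not expect a serious technical obstacle. The pedagogical subtlety is the asymmetry between a candidate optimizer of rank $s < r$ and one of rank exactly $r$: at the former, rank-$1$ perturbations in every coordinate direction remain feasible, so the tangent cone is unrestricted and the normal cone is trivial. Framing the argument through cones rather than invoking the SVD (as in Eckart--Young) is the feature I would preserve, since it foreshadows why the analogous tensor approximation problem can be ill-posed: for tensors, the closedness needed in step (i) can itself fail.
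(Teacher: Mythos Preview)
Your proposal is correct and follows essentially the same route as the paper: closedness of $\Omega$ for existence, then at any $\B$ with $\rank(\B)<r$ you observe that every elementary (rank-$1$) matrix and its negative lie in $T_{\Omega}(\B)$, invoke \cref{prp:basis-normal-zero} to get $N_{\Omega}(\B)=\{\mathbf{0}\}$, and conclude via \cref{thm:first-order-necessary} since $\A-\B\neq\mathbf{0}$. Your version is slightly more explicit about compactness (intersecting with a ball) and about why $\Omega$ is closed (vanishing of minors), but the argument is otherwise the paper's.
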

\begin{proof}
  Let $\Omega = \set{ \B \in \Real^{m \times n} | \rank(\B) \leq r }$.
  Since $\Omega$ is closed 
  (which we know from the Hasse diagram in \cref{fig:hasse-matrix}), 
  an optimal solution exists at some $\B^* \in \Omega$.

 Consider $\B \in \Omega$ with $\rank(\B) < r$.
  We will show $-\nabla f(\B) \notin N_{\Omega}(\B)$.  So $\B$ fails to satisfy the first-order optimal condition in \cref{thm:first-order-necessary}, and hence is not optimal.

  Let $\mbf{E}_{ij} \in \Real^{m \times n}$ be the rank-1 matrix with a 1 in position $(i,j)$ and zeros elsewhere.
  For all $\tau > 0$, we have $\B + \tau \mbf{E}_{ij} \in \Omega$
  since addition of a rank-1 matrix changes the rank by at most one. 
  From this, it is a small step to see $\Mx{E}{ij} \in T_{\Omega}(\B)$.
  Similar reasoning leads to $-\Mx{E}{ij} \in T_{\Omega}(\B)$.
  Since $\set{\mbf{E}_{ij}}$ forms a basis for $\Real^{m \times n}$ and the matrices $\set{ \pm \mbf{E}_{ij} } \subset T_{\Omega}(\B)$, we have $N_{\Omega}(\B) = \set{\mbf{0}}$ by \cref{prp:basis-normal-zero}.
On the other hand, we can check that
  $-\nabla f(\B) = \A - \B$.
  Since $\rank(\A) > \rank(\B)$, it follows
  $-\nabla f(\B) = \A - \B \neq \mbf{0}$.
Therefore $-\nabla f(\B) \notin N_{\Omega}(\B)$, and $\B$ is not a minimizer.

  Since $\B$ was an arbitrary point in $\Omega$ with rank less than $r$, we conclude that optimal solutions $\B^*$ (since they exist) must all have rank exactly $r$.
\end{proof}

A simple but subtle consequence of the proposition is that the low-rank matrix approximation problem over the set of \emph{exactly} rank-$r$ matrices has a solution for all target matrices $\A$ satisfying $\rank(\A) \geq r$, even though the feasible set is not closed.
In fact, matrices of rank strictly less than $r$ are in the \emph{singular locus} 
of the set $\set{ \A \in \Real^{m \times n} : \rank(\A) \leq r}$,
meaning that they occur at points of nonsmoothness \cite{harris1992algebraic}.
\Cref{fig:matrix-affine} provides an example: $\mbf{M}_0$ is a singular point of the curve where $\det(\mbf{M}) = 0$.
No vector in the visualized slice belongs to the normal cone of $\mbf{M}_0$ since every such vector has a positive inner product with at least one of the tangent directions.

\subsection{Best low-rank tensor approximation}
As we have said, the tensor low-rank approximation problem can be ill-posed.
The $\ttt$ setting provides a particularly pathological demonstration of ill-posedness: \emph{no} rank-3 tensor has a best rank-2 approximation.
We give a new direct proof of this statement.

\begin{theorem}[Ill-posedness of tensor approximation {\cite[Theorem 8.1]{de_silva_tensor_2008}}]
  \label{thm:ttt-no-nearest}
  No rank-3 tensor in $\Real^{\ttt}$ has a best rank-2 approximation.
\end{theorem}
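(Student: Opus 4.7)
The plan is to argue by contradiction, mirroring the first-order strategy of \cref{prp:matrix-no-nearest} but working in the closure of the feasible set to secure existence of minimizers. Set $\Omega = \set{\Tn{B} \in \Real^{\ttt} : \rank(\Tn{B}) \leq 2}$ and $f(\Tn{B}) = \tfrac{1}{2} \norm{\Tn{A} - \Tn{B}}^2$. By \cref{thm:d3-brank} together with the Hasse diagram of \cref{fig:hasse-tensor}, the closure is $\overline{\Omega} = \Omega \cup D_3$, and because every $D_3$ tensor is a limit of rank-2 tensors the infimum of $f$ over $\Omega$ equals the minimum over the closed set $\overline{\Omega}$. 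Any purported best rank-2 approximation $\Tn{B}^*$ must therefore also minimize $f$ on $\overline{\Omega}$, and by \cref{thm:first-order-necessary} satisfy $\Tn{A} - \Tn{B}^* \in N_{\overline{\Omega}}(\Tn{B}^*)$. The plan is to rule out every orbit of $\Omega$ as a location for such a $\Tn{B}^*$, so that every minimizer must lie in $D_3 \subset \overline{\Omega} \setminus \Omega$, contradicting $\Tn{B}^* \in \Omega$.

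For $\Tn{B}^* \in G_2$ the argument is immediate: $G_2 = \set{\Tn{T} : \Delta(\Tn{T}) > 0}$ is open in $\Real^{\ttt}$ and sits inside $\overline{\Omega}$, so the tangent cone is all of $\Real^{\ttt}$ and the normal cone is $\set{\mbf{0}}$, forcing $\Tn{A} = \Tn{B}^*$ and contradicting $\rank(\Tn{A}) = 3$. For $\Tn{B}^*$ in a lower-dimensional orbit, the strategy is to exhibit enough directions $\pm \Tn{V}$ in $T_{\overline{\Omega}}(\Tn{B}^*)$ to invoke \cref{prp:basis-normal-zero}. When $\Tn{B}^*$ has rank at most one (i.e.\ lies in $D_0$ or $D_1$), any rank-one $\Tn{V}$ satisfies $\rank(\Tn{B}^* + t\Tn{V}) \leq 2$ for all $t \in \Real$, so both $\pm \Tn{V}$ are in the tangent cone; since rank-one tensors span $\Real^{\ttt}$, a basis of signed tangent directions is immediate and the normal cone collapses to $\set{\mbf{0}}$, yielding $\Tn{A} = \Tn{B}^*$ once again.

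The hardest case is $\Tn{B}^* \in D_2 \cup D_2^\prime \cup D_2^{\prime\prime}$, because adding a generic rank-one tensor can push $\Tn{B}^*$ into rank-three territory inside $G_3$, which is \emph{not} in $\overline{\Omega}$. Here the plan is to exploit the singular local geometry visualized in \cref{fig:D2pp}: these five-dimensional orbits sit at singular points of the hyperdeterminant hypersurface where $G_2$, $G_3$, and $D_3$ all meet. One can combine rank-preserving perturbations along the mode in which $\Tn{B}^*$ has unfolding rank one with curves along branches of $D_3$ passing through $\Tn{B}^*$, parameterize the resulting subset of $\overline{\Omega}$ by a polynomial map, and apply \cref{prp:linear-subspace-tangent} to extract a linear subspace of tangent directions whose signed span is all of $\Real^{\ttt}$. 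Combining the cases forces every minimizer to lie in $D_3$, and since $D_3 \cap \Omega = \emptyset$ no rank-2 tensor attains the infimum. The main obstacle is precisely this last case, where the nonsmoothness of $\overline{\Omega}$ at the $D_2$-type singularities prevents the clean openness argument that handles $G_2$ and instead demands a careful assembly of the tangent cone from several branches of the hyperdeterminant variety.
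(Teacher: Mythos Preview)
Your proposal is correct and follows essentially the same route as the paper: rule out every orbit of rank $\leq 2$ by showing its normal cone is $\{\mathbf{0}\}$, handling $G_2$ by openness, $D_0 \cup D_1$ by adding rank-one tensors, and the $D_2$-type orbits by combining a linear subspace of tangent directions (via \cref{prp:linear-subspace-tangent}) with two extra directions coming from lines into $D_3$. Your choice to pass to $\overline{\Omega}$ at the outset is a mild organizational streamlining---it folds the paper's separate border-rank argument for $\Tn{A} \in D_3$ into the same first-order framework---but the technical content is identical; for the $D_2$-type case the paper is simply more explicit, using the Jacobian of the rank-$2$ CP map $(\mathbf{x}_1,\ldots,\mathbf{z}_2) \mapsto \mathbf{x}_1 \circ \mathbf{y}_1 \circ \mathbf{z}_1 + \mathbf{x}_2 \circ \mathbf{y}_2 \circ \mathbf{z}_2$ at a preimage of the canonical form to produce a six-dimensional subspace of $T_{\Omega}(\Tn{B})$, and then checking directly that the two missing coordinate tensors yield lines into $D_3$.
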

\begin{proof}
  Let $\Omega \equiv D_0 \cup D_1 \cup D_2 \cup D_2^\prime \cup D_2^{\prime\prime} \cup G_2 \subset \Real^{\ttt}$ denote the set of tensors of rank at most 2.
  Let $\Tn{A} \in \Real^{\ttt}$ be such that $\rank(\Tn{A}) = 3$.
  We wish to show that
  $f(\Tn{B}) = \frac{1}{2}\norm{\Tn{A} - \Tn{B}}^2$
  does not attain a minimum in $\Omega$.

  If $\Tn{A} \in D_3$, then it has border rank 2 by \cref{thm:d3-brank}.
  Hence, it has no best rank-2 approximation.

  The other possibility is $\Tn{A} \in G_3$, where we can show something even stronger than the lack of a best approximant: there exist no first-order critical points.
  Taking the negative gradient and applying \cref{thm:first-order-necessary},
  any minimizer $\Tn{B}$ must satisfy
  $-\nabla f(\Tn{B}) = \Tn{A} - \Tn{B} \in N_{\Omega}(\Tn{B})$.
  By the assumption that $\rank(\Tn{A}) = 3$ and $\rank(\Tn{B}) \leq 2$, 
  we know $-\nabla f(\Tn{B}) \neq \mbf{0}$.
  Hence, to show the non-existence of first-order critical points, it suffices to prove
  $N_{\Omega}(\Tn{B}) = \set{ \mbf{0} }$. 
  We do this for each orbit with rank $\leq 2$.

\smallskip

  \textbf{Case 1:} Suppose $\Tn{B} \in D_0 \cup D_1$.
  We use an argument analogous to \cref{prp:matrix-no-nearest} but for tensors.
  If $\Tn{B} \in D_0 \cup D_1$, we can add any rank-1 tensor while keeping the rank $\leq 2$.
  Specifically, let
  $\Tn{E}{ijk}$ be the tensor of all zeros except a 1 in position $(i,j,k)$.
  We can argue that $\pm \Tn{E}{ijk} \in T_{\Omega}(\Tn{B})$ for all $i,j,k \in \set{1,2}$.
  Since $\set{\Tn{E}{ijk}}$ is a basis for $\Real^{\ttt}$, the normal cone 
  $N_{\Omega}(\Tn{B}) = \set{ \mbf{0} }$ by \cref{prp:basis-normal-zero}.

\smallskip

  \textbf{Case 2:} Suppose $\Tn{B} \in G_2$.
  By the arguments in \cref{sec:rank-2-3-typical}, we know that
  $G_2$ is an open set in $\Real^{\ttt}$.
  Therefore, the tangent cone at $\Tn{B}$ is $\Real^{\ttt}$
  and hence the normal cone is $\set{ \mbf{0} }$.

\smallskip

  \textbf{Case 3:} Suppose $\Tn{B} \in 
  D_2 \cup D_2^\prime \cup D_2^{\prime\prime}$.
  Since these three orbits are equivalent up to permutation of the modes, 
  we assume $\Tn{B} \in D_2^\prime$ without loss of generality.  
  We will show that $T_{\Omega}(\Tn{B})$ contains a basis of $\Real^{\ttt}$ and the negative of the basis, therefore $N_{\Omega}(\Tn{B}) = \set{\mbf{0}}$.
  
   Without loss of generality we can assume
  that $\Tn{B}$ is the canonical form of $D_2^\prime$.
  This is for the following reason. 
  Any tensor in $\Tn{C} \in D_2^\prime$ can be written as
  $\Tn{C} = \Tn{B} \times_1 \U \times_2 \V \times_3 \W$ for some
  $(\U,\V,\W) \in \GL(2) \times \GL(2) \times \GL(2)$.
  Since $\Omega$ is invariant under $\GL(2) \times \GL(2) \times \GL(2)$, it holds  $T_{\Omega}(\Tn{C}) = T_{\Omega}(\Tn{B}) \times_1 \U \times_2 \V \times_3 \W = \set{ \Tn{T} \times_1 \U \times_2 \V \times_3 \W : \Tn{T} \in T_{\Omega}(\Tn{B}) }$. 
  Since such a transformation by $(\U, \V, \W)$ is linear and invertible it takes bases to bases, hence if $T_{\Omega}(\Tn{B})$ contains a basis and its negative then so does 
  $T_{\Omega}(\Tn{C})$. 

  To obtain a subset of the tangent cone, we use \cref{prp:linear-subspace-tangent}.
  We let
  $f : (\Real^2)^{\otimes 6} \rightarrow \Real^{\ttt}$
  be defined by
  \begin{align*}
    f(\mbf{x}_1, \mbf{y}_1, \mbf{z}_1, \mbf{x}_2, \mbf{y}_2, \mbf{z}_2) =
    \mbf{x}_1 \otimes \mbf{y}_1 \otimes \mbf{z}_1 +
    \mbf{x}_2 \otimes \mbf{y}_2 \otimes \mbf{z}_2 ,
  \end{align*}
  so that the image of $f$ is  $\Omega$ and
  $f( \begin{bsmallmatrix} 1 \\ 0 \end{bsmallmatrix},
  \begin{bsmallmatrix} 1 \\ 0 \end{bsmallmatrix},
  \begin{bsmallmatrix} 1 \\ 0 \end{bsmallmatrix},
  \begin{bsmallmatrix} 1 \\ 0 \end{bsmallmatrix},
  \begin{bsmallmatrix} 0 \\ 1 \end{bsmallmatrix},
  \begin{bsmallmatrix} 0 \\ 1 \end{bsmallmatrix} ) = \Tn{B}$. 
  The column space of the Jacobian matrix of $f$ at this point,
  i.e.,
  $Df( \begin{bsmallmatrix} 1 \\ 0 \end{bsmallmatrix},
  \begin{bsmallmatrix} 1 \\ 0 \end{bsmallmatrix},
  \begin{bsmallmatrix} 1 \\ 0 \end{bsmallmatrix},
  \begin{bsmallmatrix} 1 \\ 0 \end{bsmallmatrix},
  \begin{bsmallmatrix} 0 \\ 1 \end{bsmallmatrix},
  \begin{bsmallmatrix} 0 \\ 1 \end{bsmallmatrix} )
  \in \Real^{8 \times 12}$,
  is spanned by vectorizations of the following 6 tensors:
  \begin{displaymath}
      \begin{tensoreight}[3ex]
        1 & 0 & 0 & 0 \\
        0 & 0 & 0 & 0
      \end{tensoreight},
      \begin{tensoreight}[3ex]
        0 & 0 & 0 & 1 \\
        0 & 0 & 0 & 0
      \end{tensoreight},
      \begin{tensoreight}[3ex]
        0 & 0 & 0 & 0 \\
        1 & 0 & 0 & 0
      \end{tensoreight},
      \begin{tensoreight}[3ex]
        0 & 0 & 0 & 0 \\
        0 & 0 & 0 & 1
      \end{tensoreight},
      \begin{tensoreight}[3ex]
        0 & 1 & 0 & 0 \\
        0 & 0 & 0 & 0
      \end{tensoreight},
      \begin{tensoreight}[3ex]
        0 & 0 & 1 & 0 \\
        0 & 0 & 0 & 0
      \end{tensoreight}.
  \end{displaymath}
  The Jacobian itself is given in \cref{sec:no-nearest-details}.
  By \cref{prp:linear-subspace-tangent}, the span of these vectors
  are in $T_{\Omega}(\Tn{B})$.
  The complement of the above span is given by the span of
  \begin{align}
      \Tn{C} =
      \begin{tensoreight}[3ex]
        0 & 0 & 0 & 0 \\
        0 & 1 & 0 & 0
      \end{tensoreight}
      \quad \text{and} \quad
      \Tn{D} =
      \begin{tensoreight}[3ex]
        0 & 0 & 0 & 0 \\
        0 & 0 & 1 & 0
      \end{tensoreight}.
  \end{align}
  A priori, these tensors might not be in the tangent cone.
  However, by inspection, both $\Tn{B} + \alpha \Tn{C}$ and $\Tn{B} + \alpha \Tn{D}$ lie in $D_3$ for any $\alpha \neq 0$.
  Therefore, $\pm \Tn{C}$ and $ \pm \Tn{D}$ are in the tangent cone at $\Tn{B}$.
  We can conclude that
  $\pm \Tn{E}{ijk} \in T_{\Omega}(\Tn{B})$ for all $i,j,k \in \set{1,2}$.
  Hence, by \cref{prp:basis-normal-zero},
  $N_{\Omega}(\Tn{B}) = \set{ \mbf{0} }$.   This finishes the casework.

\smallskip

    Since there are no rank-$2$ tensor satisfying the first-order critical condition when $\Tn{A} \in G_3$, there are no rank-2 minimizers.
\end{proof}

In particular, singularities of the feasible set give
rise to surprising behavior for the tensor low-rank approximation problem.
Consider \cref{fig:D2pp}, where the tensor $\Tn{T}_0 \in D_2^{\prime\prime}$ is of rank 2 and has arbitrarily close rank-3 neighbors in $G_3$.
According to \cref{thm:ttt-no-nearest}, $\Tn{T}_0$ is not the best rank-2 approximant to any of these tensors.
You can visually see how this happens:
for any fixed $G_3$ tensor $\Tn{A}$, there will always be a $G_2$ tensor that is closer to $\Tn{A}$ than $\Tn{T}_0$. A cartoon picture to have in mind is that any point in the interior of a triangle is always closer to an edge than to a corner.

One conclusion of \cite{de_silva_tensor_2008} is that the ill-posedness phenomenon is essentially independent of norms, since all are equivalent and induce the same topology.
Sequences converging in the Frobenius norm will converge in any other equivalent norm, so sets of bounded tensor rank remain non-closed.
We can add to this another observation: \cref{thm:ttt-no-nearest} holds independent of the norm, at least for smooth norms.  Indeed, $D_3$ tensors have convergent rank-2 sequences, and for $G_3$ tensors the gradient is still nonzero (by equivalence of norms)
while the normal cone is norm-independent (and so still equal to $\set{ \mbf{0} }$).

\subsection{Numerical implications}
What are the implications of ill-posedness of low-rank tensor approximation?
If a solution does not exist, what happens when a numerical optimization algorithm like alternating least squares \cite[Section 3.4]{kolda_tensor_2009}, nonlinear least squares \cite[Chapter 10]{nocedal_numerical_2006}, or even gradient descent is applied to the problem \eqref{eqn:tensor-low-rank}?
Iterative numerical algorithms produce a sequence of factor matrices $(\mbf{A}_k, \mbf{B}_k, \mbf{C}_k)_{k=1}^{\infty}$, and therefore a corresponding sequence of low-rank tensors $(\Tn[\hat]{T}{k})_{k=1}^{\infty}$ such that
$\Tn[\hat]{T}{k} = \llbracket \mbf{A}_k, \mbf{B}_k, \mbf{C}_k \rrbracket$.  
Let us suppose that the iterates successfully \emph{infimize} \eqref{eqn:tensor-low-rank}.  
Then it \emph{might} be the case that the tensors $(\Tn[\hat]{T}{k})_{k=1}^{\infty}$ converge, but it \emph{must} be the case that the factor matrices do not converge; specifically, at least two sequences of $(\mbf{A}_k)_{k=1}^{\infty}, (\mbf{B}_k)_{k=1}^{\infty}, (\mbf{C}_k)_{k=1}^{\infty}$ must diverge \cite[Proposition 4.8]{de_silva_tensor_2008}.  Further, if $(\Tn[\hat]{T}{k})_{k=1}^{\infty}$ has limit $\Tn[\hat]{T}$, then $\Tn[\hat]{T}$ has rank strictly greater than $r$ but border rank $\leq r$.  

To be concrete for the $\ttt$ case, if one attempts to compute a best rank-2 approximation to $\Tn{T} \in D_3$, then the iterates 
$\Tn[\hat]{T}{k} = \mbf{a}_{1,k} \circ \mbf{b}_{1,k} \circ \mbf{c}_{1,k} +
        \mbf{a}_{2,k} \circ \mbf{b}_{2,k} \circ \mbf{c}_{2,k}$
returned by a numerical algorithm may converge to the global infimum such that $\lim_{k \rightarrow \infty} \Tn[\hat]{T}{k} = \Tn{T}$, but
$\lim_{k \rightarrow \infty} \norm{\mbf{a}_{1,k} \circ \mbf{b}_{1,k} \circ \mbf{c}_{1,k}} = 
 \lim_{k \rightarrow \infty} \norm{\mbf{a}_{2,k} \circ \mbf{b}_{2,k} \circ \mbf{c}_{2,k}} =
 \infty$.
If instead $\Tn{T} \in G_3$ then $\Tn[\hat]{T}{k}$ could converge to
$\argmin_{\Tn[\hat]{T} : \text{border rank $\Tn[\hat]{T}$} \leq 2} \norm{\Tn{T} - \Tn[\hat]{T}}^2$
(which is in $D_3$) and the norms of each component would still diverge.

This situation is called ``diverging factors" in the literature.  
Ill-posedness can also lead to problems such as slowed convergence for numerical algorithms \cite{paatero_construction_2000, giordani_candecompparafac_2013}.
Interestingly, practitioners rarely observe diverging factors in practice for large tensors.
It is also possible to use safeguards against ill-posedness such as constraining the norms of the components (thereby restricting optimization to a compact set of factor matrices), constraining one factor matrix to be orthonormal \cite{krijnen_non-existence_2008}, or adding a regularization term to the cost function to penalize large-norm components \cite{AcDuKo11}.

\section{Beyond \texorpdfstring{$2 \times 2 \times 2$}{2 x 2 x 2} tensors}
\label{sec:beyond-ttt} 

Our examination of $\ttt$ tensors highlighted a variety of surprising phenomenological differences between matrices and tensors: multiple typical ranks, multiple orbits of the same rank, non-closedness of bounded rank sets, and ill-posedness of the low-rank approximation problem.
To obtain these results we used: groups and group actions, orbits, polynomials characterizing rank, cones and optimality, singularities, and algebraic geometry.

How many of these phenomena are particular to $2 \times 2 \times 2$ tensors?  Which features extend to tensors of bigger size?  

From \cref{sec:rank-semialgebraic}, tensors of bounded or fixed rank form semi-algebraic sets in general.
This implies the \emph{existence} of polynomial equations and inequalities that characterize the set (playing the roles of the hyperdeterminant and minors of the flattenings for $2 \times 2 \times 2$ tensors above).
However, explicit collections of such polynomials are not known for almost all larger tensor formats (nor are good degree bounds available to our knowledge).
In particular, the hyperdeterminant polynomial is not defined for many tensor formats \cite{gelfand_hyperdeterminants_1992}.
Even in cases where the hyperdeterminant is defined, it is generally not sufficient to characterize rank and border rank.
However, a semi-algebraic characterization for border rank-2 tensors in $\mathbb{R}^{m \times n \times p}$ is known, see \cite[Theorem 4.4]{seigal_real_2017}. This result is partly a consequence of the $2 \times 2 \times 2$ analysis applied to various $\ttt$ \emph{subtensors}.

Group actions are important for understanding tensor geometry in larger spaces, where the product group $\GL(m) \times \GL(n) \times \GL(p)$ acts on $\mathbb{R}^{m \times n \times p}$ through the multi-TTM operation.
However, the situation is murkier: there are usually infinitely many orbits.
In addition to matrices, covered above, the only tensor spaces with a finite number of orbits under the action of the respective product groups are $2 \times 2 \times a$ and $3 \times 2 \times a$ for $a \geq 1$ (\cite[Theorem 10.1.1.1]{landsberg_tensors_2012}, \cite{kac_remarks_1980}).
Thus, even tensors as small as $3 \times 3 \times 3$ will have an infinite number of orbits under the natural group action by invertible matrices.
This points to the need for additional tools to analyze larger tensor spaces.

Non-closed bounded-rank tensor sets still occur in higher dimensions, and so the headache of ill-posedness for low-rank tensor approximations also persists.
However, a quantitative understanding of ill-posedness is lacking.
For example, a positive volume of $n \times n \times n$ tensors have no best rank-2 approximation {\cite[Theorem 8.8]{de_silva_tensor_2008}}.
But it is open to describe the scaling of this volume, e.g., via lower/upper bounds on the probability of ill-posedness as a function of $n$.
In terms of positive results, given a tensor $\Tn{T} \in \Real^{n \times n \times n}$ (satisfying a variety of technical conditions) and an integer $r$, there exists an $\epsilon$-ball about $\Tn{T}$ such that all tensors in the ball have a best rank-$r$ approximation.
A lower bound on this radius $\epsilon$ has a deterministic formula and can be computed from entries of $\Tn{T}$ \cite{evert_guarantees_2022}.
Although computationally expensive, this approach gives a method to check well-posedness of a particular problem instance.

Multiple real typical ranks also occur in higher dimensions.
For example: $n \times n \times 2$ tensors have typical ranks $\set{n, n+1}$ \cite{ten_berge_simplicity_1999} and $12 \times 4 \times 4$ tensors have typical ranks $\set{12, 13}$ \cite{comon_generic_2009}.
Table 3.3 of \cite{kolda_tensor_2009} collects some general bounds for $m \times n \times p$ tensors where $m$, $n$, and $p$ satisfy various conditions.
Otherwise very limited information is available.

We also mention that although we restricted discussion to third-order tensors, all of the discussed properties and phenomena apply to higher-order tensors as well \cite{landsberg_tensors_2012}.

\section{Conclusion}
Tensors are a fertile area of research, unifying ideas and problems in statistics, numerical linear algebra, optimization, signal processing, algebraic geometry, and imaging, to name a few.  
Presently, tensors are seldom taught in the standard applied and computational mathematics courses.
Our examination of $\ttt$ tensors gives a peek into the complexity of tensors in general, especially phenomena related to rank.
By highlighting deviations from the familiar behavior of matrices we hope to have provided a glimpse of the rich, surprising, and often counterintuitive behavior of tensors.

\appendix

\section{Dimensions of tensor orbits}
\label{sec:tensor-orbit-details}
This section provides additional details regarding the proof of the tensor orbits dimensions in \cref{thm:tensor-orbit-dimension}.
First we provide the Jacobians for the other 7 orbits, and then we give an alternative derivation for these dimensions using an explicit Jacobian.

\subsection{Remaining Jacobians}
In \cref{thm:tensor-orbit-dimension} only the Jacobian of orbit $D_1$ was given.
The Jacobians for all 8 orbits are given in \cref{tbl:all-dimension-jacobians}, their ranks giving the claimed dimensions.
\begin{table}
  \begin{center}
    \begin{tblr}{c | c | c}
      orbit & Jacobian of $p$ at canonical form & dimension \\
      $D_0$ &
      $\begin{bsmallmatrix}
        0 & 0 & 0 & 0 & 0 & 0 & 0 & 0 & 0 & 0 & 0 & 0 \\
        0 & 0 & 0 & 0 & 0 & 0 & 0 & 0 & 0 & 0 & 0 & 0 \\
        0 & 0 & 0 & 0 & 0 & 0 & 0 & 0 & 0 & 0 & 0 & 0 \\
        0 & 0 & 0 & 0 & 0 & 0 & 0 & 0 & 0 & 0 & 0 & 0 \\
        0 & 0 & 0 & 0 & 0 & 0 & 0 & 0 & 0 & 0 & 0 & 0 \\
        0 & 0 & 0 & 0 & 0 & 0 & 0 & 0 & 0 & 0 & 0 & 0 \\
        0 & 0 & 0 & 0 & 0 & 0 & 0 & 0 & 0 & 0 & 0 & 0 \\
        0 & 0 & 0 & 0 & 0 & 0 & 0 & 0 & 0 & 0 & 0 & 0
      \end{bsmallmatrix}$ & 0 \\ \hline
      $D_1$ &
      $\begin{bsmallmatrix}
         1 & 0 & 0 & 0 & 1 & 0 & 0 & 0 & 1 & 0 & 0 & 0 \\
         0 & 1 & 0 & 0 & 0 & 0 & 0 & 0 & 0 & 0 & 0 & 0 \\
         0 & 0 & 0 & 0 & 0 & 1 & 0 & 0 & 0 & 0 & 0 & 0 \\
         0 & 0 & 0 & 0 & 0 & 0 & 0 & 0 & 0 & 0 & 0 & 0 \\
         0 & 0 & 0 & 0 & 0 & 0 & 0 & 0 & 0 & 1 & 0 & 0 \\
         0 & 0 & 0 & 0 & 0 & 0 & 0 & 0 & 0 & 0 & 0 & 0 \\
         0 & 0 & 0 & 0 & 0 & 0 & 0 & 0 & 0 & 0 & 0 & 0 \\
         0 & 0 & 0 & 0 & 0 & 0 & 0 & 0 & 0 & 0 & 0 & 0
      \end{bsmallmatrix}$ & 4 \\ \hline
      $D_2$ &
      $\begin{bsmallmatrix}
         1 & 0 & 0 & 0 & 1 & 0 & 0 & 0 & 1 & 0 & 0 & 0 \\
         0 & 1 & 0 & 0 & 0 & 0 & 1 & 0 & 0 & 0 & 0 & 0 \\
         0 & 0 & 1 & 0 & 0 & 1 & 0 & 0 & 0 & 0 & 0 & 0 \\
         0 & 0 & 0 & 1 & 0 & 0 & 0 & 1 & 1 & 0 & 0 & 0 \\
         0 & 0 & 0 & 0 & 0 & 0 & 0 & 0 & 0 & 1 & 0 & 0 \\
         0 & 0 & 0 & 0 & 0 & 0 & 0 & 0 & 0 & 0 & 0 & 0 \\
         0 & 0 & 0 & 0 & 0 & 0 & 0 & 0 & 0 & 0 & 0 & 0 \\
         0 & 0 & 0 & 0 & 0 & 0 & 0 & 0 & 0 & 1 & 0 & 0
      \end{bsmallmatrix}$ & 5 \\ \hline
      $D_2^\prime$ &
      $\begin{bsmallmatrix}
         1 & 0 & 0 & 0 & 1 & 0 & 0 & 0 & 1 & 0 & 0 & 0 \\
         0 & 1 & 0 & 0 & 0 & 0 & 0 & 0 & 0 & 0 & 0 & 0 \\
         0 & 0 & 0 & 0 & 0 & 1 & 0 & 0 & 0 & 0 & 1 & 0 \\
         0 & 0 & 0 & 0 & 0 & 0 & 0 & 0 & 0 & 0 & 0 & 0 \\
         0 & 0 & 0 & 0 & 0 & 0 & 1 & 0 & 0 & 1 & 0 & 0 \\
         0 & 0 & 0 & 0 & 0 & 0 & 0 & 0 & 0 & 0 & 0 & 0 \\
         1 & 0 & 0 & 0 & 0 & 0 & 0 & 1 & 0 & 0 & 0 & 1 \\
         0 & 1 & 0 & 0 & 0 & 0 & 0 & 0 & 0 & 0 & 0 & 0
      \end{bsmallmatrix}$ & 5 \\ \hline
      $D_2^{\prime\prime}$ &
      $\begin{bsmallmatrix}
         1 & 0 & 0 & 0 & 1 & 0 & 0 & 0 & 1 & 0 & 0 & 0 \\
         0 & 1 & 0 & 0 & 0 & 0 & 0 & 0 & 0 & 0 & 1 & 0 \\
         0 & 0 & 0 & 0 & 0 & 1 & 0 & 0 & 0 & 0 & 0 & 0 \\
         0 & 0 & 0 & 0 & 0 & 0 & 0 & 0 & 0 & 0 & 0 & 0 \\
         0 & 0 & 1 & 0 & 0 & 0 & 0 & 0 & 0 & 1 & 0 & 0 \\
         0 & 0 & 0 & 1 & 1 & 0 & 0 & 0 & 0 & 0 & 0 & 1 \\
         0 & 0 & 0 & 0 & 0 & 0 & 0 & 0 & 0 & 0 & 0 & 0 \\
         0 & 0 & 0 & 0 & 0 & 1 & 0 & 0 & 0 & 0 & 0 & 0
      \end{bsmallmatrix}$ & 5 \\ \hline
      $D_3$ &
      $\begin{bsmallmatrix}
         1 & 0 & 0 & 0 & 1 & 0 & 0 & 0 & 1 & 0 & 0 & 0 \\
         0 & 1 & 0 & 0 & 0 & 0 & 0 & 0 & 0 & 0 & 1 & 0 \\
         0 & 0 & 0 & 0 & 0 & 1 & 0 & 0 & 0 & 0 & 1 & 0 \\
         0 & 0 & 0 & 0 & 0 & 0 & 0 & 0 & 0 & 0 & 0 & 0 \\
         0 & 0 & 1 & 0 & 0 & 0 & 1 & 0 & 0 & 1 & 0 & 0 \\
         0 & 0 & 0 & 1 & 1 & 0 & 0 & 0 & 0 & 0 & 0 & 1 \\
         1 & 0 & 0 & 0 & 0 & 0 & 0 & 1 & 0 & 0 & 0 & 1 \\
         0 & 1 & 0 & 0 & 0 & 1 & 0 & 0 & 0 & 0 & 0 & 0
      \end{bsmallmatrix}$ & 7 \\ \hline
      $G_2$ &
      $\begin{bsmallmatrix}
         1 & 0 & 0 & 0 & 1 & 0 & 0 & 0 & 1 & 0 & 0 & 0 \\
         0 & 1 & 0 & 0 & 0 & 0 & 0 & 0 & 0 & 0 & 0 & 0 \\
         0 & 0 & 0 & 0 & 0 & 1 & 0 & 0 & 0 & 0 & 0 & 0 \\
         0 & 0 & 0 & 0 & 0 & 0 & 0 & 0 & 0 & 0 & 1 & 0 \\
         0 & 0 & 0 & 0 & 0 & 0 & 0 & 0 & 0 & 1 & 0 & 0 \\
         0 & 0 & 0 & 0 & 0 & 0 & 1 & 0 & 0 & 0 & 0 & 0 \\
         0 & 0 & 1 & 0 & 0 & 0 & 0 & 0 & 0 & 0 & 0 & 0 \\
         0 & 0 & 0 & 1 & 0 & 0 & 0 & 1 & 0 & 0 & 0 & 1
      \end{bsmallmatrix}$ & 8 \\ \hline
      $G_3$ &
      $\begin{bsmallmatrix*}[r]
         \phantom{-}1 &  \phantom{-}0 &  \phantom{-}0 &  \phantom{-}0 &  \phantom{-}1 &  \phantom{-}0 &  \phantom{-}0 &  \phantom{-}0 &  \phantom{-}1 &  \phantom{-}0 &  \phantom{-}0 &  \phantom{-}0 \\
         0 &  1 &  0 &  0 &  0 &  0 &  1 &  0 &  0 &  0 &  1 &  0 \\
         0 &  0 &  1 &  0 &  0 &  1 &  0 &  0 &  0 &  0 & -1 &  0 \\
         0 &  0 &  0 &  1 &  0 &  0 &  0 &  1 &  1 &  0 &  0 &  0 \\
         0 &  0 &  1 &  0 &  0 &  0 & -1 &  0 &  0 &  1 &  0 &  0 \\
         0 &  0 &  0 &  1 &  1 &  0 &  0 &  0 &  0 &  0 &  0 &  1 \\
        -1 &  0 &  0 &  0 &  0 &  0 &  0 & -1 &  0 &  0 &  0 & -1 \\
         0 & -1 &  0 &  0 &  0 &  1 &  0 &  0 &  0 &  1 &  0 &  0
      \end{bsmallmatrix*}$ & 8 \\ \hline
    \end{tblr}
  \end{center}
  \caption{Jacobians of the multi-TTM operation on the respective canonical forms of each orbit.
    The rank of each Jacobian gives the dimension of the respective orbit.
    The 8 rows correspond to
    $( f_{111}, f_{211}, f_{121}, f_{221}, f_{112}, f_{212}, f_{122}, f_{222} )$
    and the 12 columns correspond to
    $( u_{11}, u_{21}, u_{12}, u_{22},
       v_{11}, v_{21}, v_{12}, v_{22},
       w_{11}, w_{21}, w_{12}, w_{22} )$.
  }
  \label{tbl:all-dimension-jacobians}
\end{table}

\subsection{Alternate proof using matrix calculus}
We can give an explicit expression for the Jacobian of the tensor map
\begin{displaymath}
  f(\U,\V,\W) = \G \times_1 \U \times_2 \V \times_3 \W,
\end{displaymath}
where we treat the input and output as vectors in $\Real^{12}$ and $\Real^8$, respectively.
The Jacobian involves Kronecker products, denoted $\otimes$,
and the tensor perfect shuffle matrices \cite{kolda_tensor_2025}.

\begin{proposition}[Explicit Jacobian of Tensor Map]
  \label{prop:jacobian-matrix-explicit}
  Define the map $f: \Real^{12} \rightarrow \Real^8$ by
  \begin{displaymath}
    f\prn[\Bigg]{
      \begin{bmatrix}
        \vec(\U) \\ \vec(\V) \\ \vec(\W)
      \end{bmatrix}
    }
    =
    \vec\prn[\Big]{
      \G \times_1 \U \times_2 \V \times_3 \W
    }
  \end{displaymath}
  where $\G$ is a $\ttt$ tensor.
  Then the Jacobian of $f$ is the $8 \times 12$ matrix given by
  \begin{displaymath}
    \Mx{J} =
    \text{\small$
        \begin{bmatrix}
          (\W \otimes \V)^{\Tr} \Tm{G}{1}' \otimes \Mx{I}{2}                           &
          \Mx{P}{2}' \left( (\W \otimes \U)^{\Tr} \Tm{G}{2}' \otimes \Mx{I}{2} \right) &
          \Mx{P}{3}' \left( (\V \otimes \U)^{\Tr} \Tm{G}{3}' \otimes \Mx{I}{2} \right)
        \end{bmatrix}
      $}.
  \end{displaymath}
  Here, $\Mx{P}{2}$ and $\Mx{P}{3}$ are tensor perfect shuffle matrices such that
  $\Mx{P}{2} \vec(\T) = \vec(\Tm{T}{2})$ and
  $\Mx{P}{3} \vec(\T) = \vec(\Tm{T}{3})$.
\end{proposition}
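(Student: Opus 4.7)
The plan is to compute the Jacobian one block column at a time, with each block column giving the partial derivative with respect to one of $\vec(\U)$, $\vec(\V)$, $\vec(\W)$. The engine is the classical vec identity $\vec(\Mx{A}\Mx{X}\Mx{B}) = (\Mx{B}^{\Tr}\otimes\Mx{A})\vec(\Mx{X})$, which I will apply to the mode-$k$ unfolding formulas for multi-TTM.

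First I would record the three standard unfolding identities, obtained by iterating single-mode TTM:
\begin{align*}
\Tm{F}{1} &= \U\,\Tm{G}{1}\,(\W\otimes\V)^{\Tr},\\
\Tm{F}{2} &= \V\,\Tm{G}{2}\,(\W\otimes\U)^{\Tr},\\
\Tm{F}{3} &= \W\,\Tm{G}{3}\,(\V\otimes\U)^{\Tr},
\end{align*}
where $\Tn{F} = \G \times_1 \U \times_2 \V \times_3 \W$. Next I would bring in the perfect shuffle matrices, defined so that $\Mx{P}{k}\vec(\Tn{F}) = \vec(\Tm{F}{k})$. Each $\Mx{P}{k}$ is a permutation matrix, so $\Mx{P}{k}^{\Tr} = \Mx{P}{k}^{-1}$; under the unfolding convention fixed in \eqref{eqn:unfoldings}, the shuffle $\Mx{P}{1}$ is the identity, which is why no shuffle prefix appears on the first block of $\Mx{J}$.

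To extract each block I would hold two of $\U, \V, \W$ fixed and view the corresponding unfolding as a product $\Mx{A}\Mx{X}\Mx{B}$ with $\Mx{X}$ the variable factor. The vec identity then returns the partial Jacobian, but written in the mode-$k$ ordering of the output. For $k=1$ this ordering coincides with $\vec(\Tn{F})$ and no further shuffling is needed; for $k=2,3$ I would left-multiply by $\Mx{P}{k}^{\Tr}$ to pull the derivative back into the $\vec(\Tn{F})$ ordering. Horizontally stacking the three $8\times 4$ block columns then yields the $8\times 12$ matrix in the statement.

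The main obstacle will be careful bookkeeping of conventions: getting the Kronecker orderings right inside each $(\cdot\otimes\cdot)^{\Tr}$ factor (which is dictated by the column ordering in \eqref{eqn:unfoldings}) and using the correct direction of each shuffle ($\Mx{P}{k}$ versus $\Mx{P}{k}^{\Tr}$). Once these are pinned down, each block column is a one-line application of the vec identity, and the remaining work is purely horizontal assembly into $\Mx{J}$.
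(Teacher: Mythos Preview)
Your proposal is correct and follows essentially the same approach as the paper: both arguments write the three mode-$k$ unfoldings of the Tucker product, apply the identity $\vec(\Mx{A}\Mx{X}\Mx{B})=(\Mx{B}^{\Tr}\otimes\Mx{A})\vec(\Mx{X})$ to isolate $\vec(\U)$, $\vec(\V)$, $\vec(\W)$ in turn, and use the perfect shuffle matrices $\Mx{P}{k}^{\Tr}$ to convert the mode-$k$ vectorizations back to the mode-$1$ ordering before stacking the three $8\times 4$ blocks. One small caution on the bookkeeping you flagged: with the unfolding convention in \eqref{eqn:unfoldings} the paper's proof writes $\Tm{F}{1}=\U\,\Tm{G}{1}(\W\otimes\V)$ (no transpose on the Kronecker factor), which is what produces the $(\W\otimes\V)^{\Tr}\Tm{G}{1}'$ appearing in the stated $\Mx{J}$; be sure your unfolding formula matches that convention rather than the Kolda--Bader one with the transpose, or the block you compute will differ by a transpose on the Kronecker factor.
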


\begin{proof}
  The function $f$ is a Tucker tensor. 
  We can use expressions for the mode-unfoldings of a Tucker tensor
  \cite[Proposition 5.5]{kolda_tensor_2025}
  and the Kronecker product relation that says 
  $\vec(\A\B\C) = (\C' \otimes \A)\vec(\B)$
  to express $f$ equivalently as 
  \begin{align*}
    f
     & = \vec\prn[\Big]{
      \U \Tm{G}{1} (\W \otimes \V)
    }
    = \prn[\Big]{(\W \otimes \V)^{\Tr} \Tm{G}{1}' \otimes \Mx{I}{2}  } \vec(\U)
    \\
     & = \Mx{P}{2}' \vec\prn[\Big]{
      \V \Tm{G}{2} (\W \otimes \U)
    }
    = \Mx{P}{2}' 
    \prn[\Big]{(\W \otimes \U)^{\Tr} \Tm{G}{2}' \otimes \Mx{I}{2}  } \vec(\V)
    \\
     & = \Mx{P}{3}' \vec\prn[\Big]{
      \W \Tm{G}{3} (\V \otimes \U)
    }
    = \Mx{P}{3}' 
    \prn[\Big]{(\V \otimes \U)^{\Tr} \Tm{G}{3}' \otimes \Mx{I}{2}  } \vec(\W).
  \end{align*}
  The result follows from the fact that the Jacobian is the matrix of partial derivatives and the derivative of $\A\Vc{X}$ is $\A$.
\end{proof}

Along with this different derivation, we provide a direct proof
that the rank of the Jacobian is independent of the choice of 
$\U$, $\V$, and $\W$. This relies heavily on the properties of the Kronecker 
product and the tensor perfect shuffle matrix.

\begin{proposition}[Independence of Jacobian Rank]
  \label{prop:jacobian-rank-independence}
  Let $\U,\V,\W \in \GL(2)$ and define the nonsingular matrices
  \begin{align*}
    \Mx{M}{1} &\equiv (\W^{-1} \otimes \V^{-1})^{\Tr} \otimes \U^{-1} 
    \in \Real^{8 \times 8} \quad\text{and}\\
    \Mx{M}{2} &\equiv \text{\rm blkdiag}(\Mx{I}{2} \otimes \Mx{U}, \Mx{I}{2} \otimes \Mx{V}, \Mx{I}{2} \otimes \Mx{W}) 
    \in \Real^{12 \times 12}.
  \end{align*}
  Then the Jacobian $\Mx{J}$ defined in \cref{prop:jacobian-matrix-explicit} 
  has the same rank as
  \begin{displaymath}
    \Mx[\tilde]{J} = \Mx{M}{1} \Mx{J} \Mx{M}{2} =
    \begin{bmatrix}
      \Tm{G}{1}' \otimes \Mx{I}{2}                           &
      \Mx{P}{2}' \left( \Tm{G}{2}' \otimes \Mx{I}{2} \right) &
      \Mx{P}{3}' \left( \Tm{G}{3}' \otimes \Mx{I}{2} \right)
    \end{bmatrix}.
  \end{displaymath}
\end{proposition}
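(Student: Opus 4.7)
The proposition asserts the change-of-variables identity $\Mx[\tilde]{J} = \Mx{M}{1} \Mx{J} \Mx{M}{2}$ with $\Mx{M}{1}$ and $\Mx{M}{2}$ invertible, from which $\rank(\Mx{J}) = \rank(\Mx[\tilde]{J})$ follows immediately. The plan is twofold: first, observe that $\Mx{M}{1}$ and $\Mx{M}{2}$ are nonsingular; second, verify the matrix equality block by block via the Kronecker mixed-product identity. Nonsingularity is immediate, since $\Mx{M}{1}$ is a Kronecker product of invertible factors $\W^{-\Tr}$, $\V^{-\Tr}$, $\U^{-1}$, and $\Mx{M}{2}$ is block diagonal with invertible Kronecker-product blocks.

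To verify the equation, I would proceed block-by-block, leveraging that $\Mx{J}$ and $\Mx[\tilde]{J}$ share the same three-column-block structure (columns indexed by $\vec(\U)$, $\vec(\V)$, $\vec(\W)$) and that $\Mx{M}{2}$ is block diagonal with compatible blocks $\Mx{I}{2} \otimes \U$, $\Mx{I}{2} \otimes \V$, $\Mx{I}{2} \otimes \W$. The first block is the model case: applying $(A \otimes B)(C \otimes D) = AC \otimes BD$, the product $\Mx{M}{1} \cdot \bigl[(\W \otimes \V)^{\Tr} \Tm{G}{1}' \otimes \Mx{I}{2}\bigr] \cdot \bigl[\Mx{I}{2} \otimes \U\bigr]$ telescopes, using $(\W^{-\Tr} \otimes \V^{-\Tr})(\W^{\Tr} \otimes \V^{\Tr}) = \Mx{I}{4}$ and $\U^{-1}\U = \Mx{I}{2}$, yielding $\Tm{G}{1}' \otimes \Mx{I}{2}$ as required.

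The second and third blocks are the delicate steps, because the perfect-shuffle matrices $\Mx{P}{2}'$ and $\Mx{P}{3}'$ sit between $\Mx{M}{1}$ and the Kronecker factors it needs to cancel against. The main obstacle is to establish commutation identities of the form $\Mx{P}{k} \Mx{M}{1} \Mx{P}{k}' = \Mx{N}{k}$, where $\Mx{N}{k}$ has an explicit Kronecker factor order matching the mode-$k$ unfolding convention. For example, I expect $\Mx{P}{2} \Mx{M}{1} \Mx{P}{2}' = (\W^{-1} \otimes \U^{-1})^{\Tr} \otimes \V^{-1}$, and analogously for mode $3$. Such identities follow by comparing the three equivalent Tucker-product expressions for $f$ derived in the proof of \cref{prop:jacobian-matrix-explicit} (specialized to inverse-transpose factors in place of $\U, \V, \W$), which forces $\Mx{P}{k}$ to act on a triple Kronecker product by swapping the factor at position $k$ with the factor at position $1$ while preserving each factor's ``transpose status.'' Once these commutations are in place, rewriting $\Mx{M}{1} \Mx{P}{k}' = \Mx{P}{k}' \Mx{N}{k}$ reduces blocks $2$ and $3$ to the same mixed-product telescoping used for block $1$, completing the verification and hence the proposition.
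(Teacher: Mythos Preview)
Your proposal is correct and follows essentially the same approach as the paper: both verify the identity block by block, handle the first block directly via the mixed-product rule, and reduce the second and third blocks by the shuffle-matrix commutation identities $\Mx{M}{1}\Mx{P}{k}' = \Mx{P}{k}'\bigl[(\cdot \otimes \cdot)^{\Tr} \otimes \cdot\bigr]$ with the Kronecker factors reordered for mode $k$. Your justification of the commutation identities (via the three equivalent Tucker-product expressions for $f$) is in fact more explicit than the paper, which simply states them.
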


\begin{proof}
  Labeling the three blocks of $\Mx{J}$ as $\Mx{J}{1}$, $\Mx{J}{2}$, and 
  $\Mx{J}{3}$ and using properties of Kronecker products and matrix 
  multiplication, we. have
  \begin{displaymath}
    \Mx[\tilde]{J} = \Mx{M}{1} \Mx{J} \Mx{M}{2} 
    = \begin{bmatrix}
      \Mx{M}{1} \Mx{J}{1} (\Mx{I}{2} \otimes \U) &
      \Mx{M}{1} \Mx{J}{2} (\Mx{I}{2} \otimes \V) &
      \Mx{M}{1} \Mx{J}{3} (\Mx{I}{2} \otimes \W)
    \end{bmatrix}.
  \end{displaymath}
  Its first block is
  \begin{displaymath}
    \Mx[\tilde]{J}{1} = 
    [(\W^{-1} \otimes \V^{-1})^{\Tr} \otimes \U^{-1}] 
    [(\W \otimes \V)^{\Tr} \Tm{G}{1}' \otimes \Mx{I}{2}]
     (\Mx{I}{2} \otimes \U) = \Tm{G}{1}' \otimes \Mx{I}{2}.
  \end{displaymath}
  The second and third blocks are more complex because
  they involve the tensor perfect shuffle matrix.
  However, we can complete those by observing that we have
  \begin{align*}
    \Mx{M} \Mx{P}{2}' & = \Mx{P}{2}' \left[ (\W^{-1} \otimes \U^{-1})^{\Tr} \otimes \V^{-1} \right], \quad \text{and} \\
    \Mx{M} \Mx{P}{3}' & = \Mx{P}{3}' \left[ (\V^{-1} \otimes \U^{-1})^{\Tr} \otimes \W^{-1} \right].
  \end{align*}
  Hence, the claim.
\end{proof}

Now that we have established these tools, all that is left is to compute 
the rank of the transformed Jacobian for each canonical form.
For our computations, the perfect shuffle matrices are
\begin{displaymath}
  \Mx{P}{2} =
  \begin{bmatrix}
    1 & 0 & 0 & 0 & 0 & 0 & 0 & 0 \\
    0 & 0 & 1 & 0 & 0 & 0 & 0 & 0 \\
    0 & 1 & 0 & 0 & 0 & 0 & 0 & 0 \\
    0 & 0 & 0 & 1 & 0 & 0 & 0 & 0 \\
    0 & 0 & 0 & 0 & 1 & 0 & 0 & 0 \\
    0 & 0 & 0 & 0 & 0 & 0 & 1 & 0 \\
    0 & 0 & 0 & 0 & 0 & 1 & 0 & 0 \\
    0 & 0 & 0 & 0 & 0 & 0 & 0 & 1
  \end{bmatrix}
  \quad\text{and}\quad
  \Mx{P}{3} =
  \begin{bmatrix}
    1 & 0 & 0 & 0 & 0 & 0 & 0 & 0 \\
    0 & 0 & 0 & 0 & 1 & 0 & 0 & 0 \\
    0 & 1 & 0 & 0 & 0 & 0 & 0 & 0 \\
    0 & 0 & 0 & 0 & 0 & 1 & 0 & 0 \\
    0 & 0 & 1 & 0 & 0 & 0 & 0 & 0 \\
    0 & 0 & 0 & 0 & 0 & 0 & 1 & 0 \\
    0 & 0 & 0 & 1 & 0 & 0 & 0 & 0 \\
    0 & 0 & 0 & 0 & 0 & 0 & 0 & 1
  \end{bmatrix}.
\end{displaymath}

Jacobians are presented with rows and columns corresponding to the input and output of $f$ in an identical fashion to \cref{tbl:all-dimension-jacobians}.

\textbf{Case I: $D_0$.} In this case, the canonical form $\G$ is the
all-zero tensor and so $\Mx[\tilde]{J}$ is an all-zero matrix with rank 0.

\textbf{Case II: $D_1$.} In this case, the canonical $\G$ has unfoldings
\begin{displaymath}
  \Tm{G}{1}' = \Tm{G}{2}' = \Tm{G}{3}' =
  \begin{bmatrix}
    1 & 0 \\ 0 & 0 \\ 0 & 0 \\ 0 & 0
  \end{bmatrix},
\end{displaymath}
so the transformed Jacobian is
\begin{displaymath}
  \Mx[\tilde]{J} =
  \begin{bmatrix}
    1 & 0 & 0 & 0 & 1 & 0 & 0 & 0 & 1 & 0 & 0 & 0 \\
    0 & 1 & 0 & 0 & 0 & 0 & 0 & 0 & 0 & 0 & 0 & 0 \\
    0 & 0 & 0 & 0 & 0 & 1 & 0 & 0 & 0 & 0 & 0 & 0 \\
    0 & 0 & 0 & 0 & 0 & 0 & 0 & 0 & 0 & 0 & 0 & 0 \\
    0 & 0 & 0 & 0 & 0 & 0 & 0 & 0 & 0 & 1 & 0 & 0 \\
    0 & 0 & 0 & 0 & 0 & 0 & 0 & 0 & 0 & 0 & 0 & 0 \\
    0 & 0 & 0 & 0 & 0 & 0 & 0 & 0 & 0 & 0 & 0 & 0 \\
    0 & 0 & 0 & 0 & 0 & 0 & 0 & 0 & 0 & 0 & 0 & 0
  \end{bmatrix},
\end{displaymath}
which has rank 4.

\textbf{Case III: $D_2$, $D_2'$, $D_2''$.}
We can group these three cases together because their canonical forms are permutation of
one another and permutation does not change rank.
Considering the canonical form $\G$ of $D_2$, its unfoldings are
\begin{displaymath}
  \Tm{G}{1}' = \Tm{G}{2}' =
  \begin{bmatrix}
    1 & 0 \\ 0 & 1 \\ 0 & 0 \\ 0 & 0
  \end{bmatrix}
  \quad\text{and}\quad
  \Tm{G}{3}' =
  \begin{bmatrix}
    1 & 0 \\ 0 & 0 \\ 0 & 0 \\ 1 & 0
  \end{bmatrix}.
\end{displaymath}
Then the transformed Jacobian is
\begin{displaymath}
  \Mx[\tilde]{J}=
  \begin{bmatrix}
    1 & 0 & 0 & 0 & 1 & 0 & 0 & 0 & 1 & 0 & 0 & 0 \\
    0 & 1 & 0 & 0 & 0 & 0 & 1 & 0 & 0 & 0 & 0 & 0 \\
    0 & 0 & 1 & 0 & 0 & 1 & 0 & 0 & 0 & 0 & 0 & 0 \\
    0 & 0 & 0 & 1 & 0 & 0 & 0 & 1 & 1 & 0 & 0 & 0 \\
    0 & 0 & 0 & 0 & 0 & 0 & 0 & 0 & 0 & 1 & 0 & 0 \\
    0 & 0 & 0 & 0 & 0 & 0 & 0 & 0 & 0 & 0 & 0 & 0 \\
    0 & 0 & 0 & 0 & 0 & 0 & 0 & 0 & 0 & 0 & 0 & 0 \\
    0 & 0 & 0 & 0 & 0 & 0 & 0 & 0 & 0 & 1 & 0 & 0
  \end{bmatrix},
\end{displaymath}
which has rank 5.

\textbf{Case IV: $D_3$.}
The unfoldings of the canonical form $\G$ are:
\begin{displaymath}
  \Tm{G}{1}' = \Tm{G}{2}' =
  \begin{bmatrix}
    1 & 0 \\
    0 & 0 \\
    0 & 1 \\
    1 & 0
  \end{bmatrix}
  \quad\text{and}\quad
  \Tm{G}{3}' =
  \begin{bmatrix}
    1 & 0 \\
    0 & 1 \\
    0 & 1 \\
    0 & 0
  \end{bmatrix}.
\end{displaymath}
The transformed Jacobian is
\begin{displaymath}
  \Mx[\tilde]{J} =
  \begin{bmatrix}
    1 & 0 & 0 & 0 & 1 & 0 & 0 & 0 & 1 & 0 & 0 & 0 \\
    0 & 1 & 0 & 0 & 0 & 0 & 0 & 0 & 0 & 0 & 1 & 0 \\
    0 & 0 & 0 & 0 & 0 & 1 & 0 & 0 & 0 & 0 & 1 & 0 \\
    0 & 0 & 0 & 0 & 0 & 0 & 0 & 0 & 0 & 0 & 0 & 0 \\
    0 & 0 & 1 & 0 & 0 & 0 & 1 & 0 & 0 & 1 & 0 & 0 \\
    0 & 0 & 0 & 1 & 1 & 0 & 0 & 0 & 0 & 0 & 0 & 1 \\
    1 & 0 & 0 & 0 & 0 & 0 & 0 & 1 & 0 & 0 & 0 & 1 \\
    0 & 1 & 0 & 0 & 0 & 1 & 0 & 0 & 0 & 0 & 0 & 0
  \end{bmatrix},
\end{displaymath}
which has rank 7.

\textbf{Case V: $G_2$.}
The canonical form $\G$ has unfoldings
\begin{displaymath}
  \Tm{G}{1}' = \Tm{G}{2}' = \Tm{G}{3}' =
  \begin{bmatrix}
    1 & 0 \\ 0 & 0 \\ 0 & 0 \\ 0 & 1
  \end{bmatrix}.
\end{displaymath}
The transformed Jacobian is
\begin{displaymath}
  \Mx[\tilde]{J} =
  \begin{bmatrix}
    1 & 0 & 0 & 0 & 1 & 0 & 0 & 0 & 1 & 0 & 0 & 0 \\
    0 & 1 & 0 & 0 & 0 & 0 & 0 & 0 & 0 & 0 & 0 & 0 \\
    0 & 0 & 0 & 0 & 0 & 1 & 0 & 0 & 0 & 0 & 0 & 0 \\
    0 & 0 & 0 & 0 & 0 & 0 & 0 & 0 & 0 & 0 & 1 & 0 \\
    0 & 0 & 0 & 0 & 0 & 0 & 0 & 0 & 0 & 1 & 0 & 0 \\
    0 & 0 & 0 & 0 & 0 & 0 & 1 & 0 & 0 & 0 & 0 & 0 \\
    0 & 0 & 1 & 0 & 0 & 0 & 0 & 0 & 0 & 0 & 0 & 0 \\
    0 & 0 & 0 & 1 & 0 & 0 & 0 & 1 & 0 & 0 & 0 & 1
  \end{bmatrix},
\end{displaymath}
which has rank 8.

\textbf{Case VI: $G_3$.}
In this case, the canonical $\G$ had unfoldings
\begin{displaymath}
  \Tm{G}{1}' =
  \begin{bmatrix*}[r]
    1 & 0 \\
    0 & 1 \\
    0 & 1 \\
    -1 & 0
  \end{bmatrix*}
  \quad\text{and}\quad
  \Tm{G}{2}'= \Tm{G}{3}' =
  \begin{bmatrix*}[r]
    1 & 0 \\
    0 & 1 \\
    0 & -1 \\
    1 & 0
  \end{bmatrix*} .
\end{displaymath}
The transformed Jacobian is
\begin{displaymath}
  \Mx[\tilde]{J} =
  \begin{bmatrix*}[r]
    \phantom{-}1 &  \phantom{-}0 &  \phantom{-}0 &  \phantom{-}0 &  \phantom{-}1 &  \phantom{-}0 &  \phantom{-}0 &  \phantom{-}0 &  \phantom{-}1 &  \phantom{-}0 &  \phantom{-}0 &  \phantom{-}0 \\
    0 &  1 &  0 &  0 &  0 &  0 &  1 &  0 &  0 &  0 &  1 &  0 \\
    0 &  0 &  1 &  0 &  0 &  1 &  0 &  0 &  0 &  0 & -1 &  0 \\
    0 &  0 &  0 &  1 &  0 &  0 &  0 &  1 &  1 &  0 &  0 &  0 \\
    0 &  0 &  1 &  0 &  0 &  0 & -1 &  0 &  0 &  1 &  0 &  0 \\
    0 &  0 &  0 &  1 &  1 &  0 &  0 &  0 &  0 &  0 &  0 &  1 \\
   -1 &  0 &  0 &  0 &  0 &  0 &  0 & -1 &  0 &  0 &  0 & -1 \\
    0 & -1 &  0 &  0 &  0 &  1 &  0 &  0 &  0 &  1 &  0 &  0
  \end{bmatrix*},
\end{displaymath}
which has rank 8.

\section{Additional details on ill-posedness of rank-2 approximation for $\ttt$ tensors}
\label{sec:no-nearest-details}
The proof of \cref{thm:ttt-no-nearest} uses the Jacobian of the rank-2 parameterization for $\ttt$ tensors.
The full Jacobian at parameters mapping to the canonical form of $D_2'$ is
\begin{displaymath}
  \begin{bmatrix}
    1 & 0 & 0 & 0 & 1 & 0 & 0 & 0 & 1 & 0 & 0 & 0 \\
    0 & 1 & 0 & 0 & 0 & 0 & 0 & 0 & 0 & 0 & 0 & 0 \\
    0 & 0 & 0 & 0 & 0 & 1 & 0 & 0 & 0 & 0 & 1 & 0 \\
    0 & 0 & 0 & 0 & 0 & 0 & 0 & 0 & 0 & 0 & 0 & 0 \\
    0 & 0 & 0 & 0 & 0 & 0 & 1 & 0 & 0 & 1 & 0 & 0 \\
    0 & 0 & 0 & 0 & 0 & 0 & 0 & 0 & 0 & 0 & 0 & 0 \\
    0 & 0 & 1 & 0 & 0 & 0 & 0 & 1 & 0 & 0 & 0 & 1 \\
    0 & 0 & 0 & 1 & 0 & 0 & 0 & 0 & 0 & 0 & 0 & 0
  \end{bmatrix},
\end{displaymath}
with the row and column ordering again matching that in \cref{tbl:all-dimension-jacobians}.
Bases for the column space and its complement are immediate upon inspection.
As noted in the original proof, the basis for the column space is guaranteed to be in the tangent space; meanwhile a simple calculation shows the elementary basis for the complement also happens to be in the tangent space.
Reshaping these 8 basis vectors into $\ttt$ tensors gives the $\Tn{E}_{ijk}$ tensors.

\bibliographystyle{neutral}
\bibliography{ref.bib}

\end{document}